\documentclass[12pt,reqno]{amsart}
\usepackage[paper=letterpaper,margin=1in]{geometry}

\usepackage{amsmath,amssymb,mathrsfs,amsthm,amsfonts,braket}
\usepackage[inline]{enumitem} 
\usepackage[usenames,dvipsnames]{xcolor}
\usepackage{hyperref}
\hypersetup{%
	colorlinks=true, linkcolor= blue, 
	citecolor= ForestGreen
}

\numberwithin{equation}{section}
\allowdisplaybreaks

\newtheorem{theorem}{Theorem}[section]
\newtheorem{lemma}[theorem]{Lemma}
\newtheorem{proposition}[theorem]{Proposition}
\newtheorem{corollary}[theorem]{Corollary}
\theoremstyle{definition}

\newtheorem{remark}[theorem]{Remark}

\usepackage{acronym}
\acrodef{BDG}{Burkholder--Davis--Gundy}
\acrodef{PDE}{Partial Differential Equation}

\usepackage{graphicx}
\newcommand*{\Cdot}{{\raisebox{-0.5ex}{\scalebox{1.8}{$\cdot$}}}} 

\renewcommand{\Set}[1]{ {\left\{ #1 \right\}} }

\DeclareMathOperator{\Ex}{\mathbf{E}}			
\renewcommand{\Pr}{\operatorname{\mathbf{P}}} 	
\DeclareMathOperator{\ind}{\mathbf{1}}			

\DeclareMathOperator{\Exp}{Exp}					
\DeclareMathOperator{\PPP}{PPP}					
\renewcommand{\hat}{\widehat}

\newcommand{\bbN}{ \mathbb{N} }
\newcommand{\bbR}{ \mathbb{R} }
\newcommand{\bbZ}{ \mathbb{Z} }

\newcommand{\Csp}{\mathcal{C}}					

\newcommand{\Dsp}{\mathcal{D}}					
\newcommand{\barDsp}{\overline{\mathcal{D}}}	

\newcommand{\pN}{p^\text{N}}			
\newcommand{\hatpN}{\widehat{p}^\text{N}}	
\newcommand{\erf}{\Phi}				
\newcommand{\erfc}{\widetilde{\Phi}}
\newcommand{\G}{G}					
\newcommand{\Gc}{\widetilde{G}}		
\newcommand{\W}{W}					
\newcommand{\U}{U}					
\newcommand{\Uc}{\widetilde{U}}		
\newcommand{\V}{V}					

\newcommand{\mgY}{N}					
\newcommand{\mg}{M}				

\newcommand{\BKi}{B^K_i}			
\newcommand{\XKi}{X^K_i}			
\newcommand{\YKi}{Y^K_i}			
\newcommand{\sfX}{\mathcal{X}}		
\newcommand{\sfJ}{\mathcal{J}}		

\newcommand{\emK}{\mu^K}			
\newcommand{\emKY}{\nu^K}			

\newcommand{\tKi}{\tau^K_i}			

\newcommand{\testf}{\Psi}					

\newcommand{\rdY}{R'}		
\newcommand{\rd}{R}			

\newcommand{\emrd}{E}		
\newcommand{\emrdY}{E'}		

\newcommand{\prd}{Q}		
\newcommand{\prdY}{Q'}		

\newcommand{\e}{\varepsilon}

\newcommand{\Yup}{ \overline{Y} }
\newcommand{\Ylw}{ \underline{Y} }
\newcommand{\YKup}{ \overline{Y}^{K} }
\newcommand{\YKlw}{ \underline{Y}^{K} }
\newcommand{\Wup}{ \overline{W} }
\newcommand{\Wlw}{ \underline{W} }
\newcommand{\Vup}{ \overline{V} }
\newcommand{\Vlw}{ \underline{V} }
\newcommand{\uup}{ \overline{u} }
\newcommand{\ulw}{ \underline{u} }
\newcommand{\Gup}{ \overline{G} }


\newcommand{\norm}{ | }

\renewcommand{\u}{u_{\star}}			
\newcommand{\z}{z_{\star}}			
\renewcommand{\t}{t_{\star}}
\newcommand{\Us}{U_{\star}}
\newcommand{\Ucs}{\widetilde{U}_{\star}}			
\newcommand{\Uss}{U_{\star\star}}


\title[Optimal Surviving Strategy for Drifted BM]
{Optimal Surviving Strategy for Drifted Brownian Motions with Absorption}
\author[W.\ Tang]{Wenpin Tang} 
	\address{Statistics department, University of California, Berkeley, CA 94720}
	\email{wenpintang@stat.berkeley.edu}	

\author[L.-C.\ Tsai]{Li-Cheng Tsai}
\address{L.-C.\ Tsai,
	Departments of Mathematics, Columbia University,
	\newline\hphantom{\quad\quad L-C Tsai}
	2990 Broadway, New York, NY 10027}
\email{lctsai.math@gmail.com}

\subjclass[2010]{ 
	Primary		60K35;		
	Secondary	35Q70,  	
	82C22.	 				
	}
\keywords{Atlas model, competing Brownian particles, hydrodynamic limit, Stefan problems, moving boundary.}

\begin{document}
\maketitle

\begin{abstract}
We study the `Up the River' problem formulated by Aldous \cite{aldous02},
where a unit drift is distributed among 
a finite collection of Brownian particles on $ \bbR_+ $, 
which are annihilated once they reach the origin. 
Starting $ K $ particles at $ x=1 $,
we prove Aldous' conjecture \cite{aldous02}
that the `push-the-laggard' strategy of distributing the drift
asymptotically (as $ K\to\infty $) maximizes 
the total number of surviving particles,
with approximately $ \frac{4}{\sqrt{\pi}} \sqrt{K} $ surviving particles.
We further establish the hydrodynamic limit of the particle density, 
in terms of a two-phase \ac{PDE} with a moving boundary,
by utilizing certain integral identities and coupling techniques. 
\end{abstract}

\section{Introduction}\label{sect:intro}
In this paper we study the `Up the River' problem formulated by Aldous \cite{aldous02}.
That is, we consider $ K $ independent Brownian particles,
which all start at $ x=1 $,
and are absorbed (annihilated) once they hit $ x=0 $.
Granted a unit drift,
we ask what is the optimal strategy of
dividing and allocating the drift among all surviving particles 
in order to maximize the number of particles that survive forever.
More precisely, 
letting $ B_i(t) $, $ i=1,\ldots,K $, denote independent standard Brownian motions,
we define the model as an $ \bbR^K_+ $-valued diffusion $ ( X_i(t);t\geq 0)_{i=1}^K $,
satisfying
\begin{align}\label{eq:X:SDE}
	X_i(t) = 1 + B_i(t\wedge\tau_i) + \int_0^{t\wedge\tau_i} \phi_i(s) ds.
\end{align}
Here $ \tau_{i}:=\inf\{t>0:X_i(t)=0\} $ denotes the absorption time of the $ i $-th particle,
and the strategy is any $ [0,1]^K $-valued, $ \{B_i(t)\}_{i=1}^K $-progressively 
measurable function $ (\phi_i(t);t\geq 0)_{i=1}^K $
such that $ \sum_{i=1}^K \phi_i(t) \leq 1 $, $ \forall t\geq 0 $.
Our goal is to maximize $ \Uc(\infty) $, where
\begin{align*}
	\Uc(\infty) := \lim_{t\to\infty} \Uc(t),
	\quad
	\Uc(t) := \# \{ i: X_i(t) >0 \}.
\end{align*}
Here $ \Uc(t) $ actually depends on $ K $,
but we \emph{suppress} the dependence in this notation,
and reserve notations such as $ \Uc_K(t) $ for \emph{scaled} quantities.
Inspired by the `Up the River: Race to the Harbor' board game,
this simple model serves as a natural optimization problem 
for a random environment with limited resources.
For $ K=2 $, \cite{mckean06} obtains an explicit expression of the law of $ \Uc(t) $,
and for large $ K $, 
numerical results are obtained in \cite{han} for the discrete analog of \eqref{eq:X:SDE}.

Focusing on the asymptotic behavior as $ K\to\infty $, 
we prove that the optimal strategy is the na\"{\i}ve \textbf{push-the-laggard strategy}
\begin{align}\label{eq:pushlaggard}
	\phi_i(t) := \ind_\Set{ X_i(t) = Z(t) },
	\quad
	\text{ where }
	Z(t) := \min\{X_i(t) : X_i(t)>0\},
\end{align}
which allocates all the unit drift on the \textbf{laggard} $ Z(t) $.
\begin{remark}
Due to the recursive nature of Brownian motions in one-dimension,
ties do occur in \eqref{eq:pushlaggard}, 
namely $ \Pr(\#\{i: X_{i}(s)=Z(s)\} >1, \text{for some } s\leq t) > 0 $, 
for all large enough $ t $.
Here we break the ties in an \emph{arbitrarily} fixed manner.
That is, any strategy $ (\phi_i(t))_{i=1}^K $ satisfying
\begin{align}\label{eq:pushlaggard:}
	\sum_{ i: X_i(t) =Z(t) } \phi_i(t) = 1
\end{align}
is regarding as a push-the-laggard strategy.
As the analysis in this paper is independent of 
the exact choice of breaking the ties,
hereafter we fix some arbitrary way of breaking the ties
and refer to \eqref{eq:pushlaggard:} as \emph{the} push-the-laggard strategy.
\end{remark}
\noindent
Furthermore, we prove that, due to self-averaging,
$ \Uc(\infty) $ is in fact deterministic to the leading order, 
under the push-the-laggard strategy.
More explicitly, $ \Uc(\infty) \approx \frac{4}{\sqrt{\pi}} K^{1/2} $.
Define the scaled process 
\begin{align*}
	\Uc_K(t) := \tfrac{1}{\sqrt{K}} \Uc(tK).
\end{align*}
The following is our main result:
\begin{theorem}
	\begin{enumerate}[label=(\alph*)]
	\item[]
	\item\label{enu:aldous:upbd}
	Regardless of the strategy,
	for any fixed $ n<\infty $ and $ \gamma \in (0,\frac14) $, we have
	\begin{align}
		\label{eq:aldous:upbd}
		\Pr\big( \Uc_K(\infty) \leq \tfrac{4}{\sqrt{\pi}} + K^{-\gamma} \big) 
		\geq 1 - CK^{-n},	\quad \forall K<\infty,
	\end{align}
	where $ C=C(n,\gamma)<\infty $ depends only on $ n $ and $ \gamma $, 
	not on the strategy.

	\item\label{enu:aldous:optl}
	Under the push-the-laggard strategy,
	for any fixed $ \gamma\in(0,\frac{1}{96}) $ and $ n<\infty $, we have
	\begin{align}
		\label{eq:aldous:optl}
		\Pr\big( |\Uc_K(\infty) - \tfrac{4}{\sqrt{\pi}} | 
		\leq 
		K^{-\gamma} \big) \geq 1 - CK^{-n},	\quad \forall K<\infty,
	\end{align}
	where $ C=C(\gamma,n)<\infty $ depends only on $ \gamma $ and $ n $.
\end{enumerate}
\label{thm:aldous}
\end{theorem}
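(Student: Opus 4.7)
\emph{Overall strategy.} Both parts will be proved by analyzing the hydrodynamic limit as $K\to\infty$. Under push-the-laggard (part (b)) the limit is the solution of a free-boundary PDE of Stefan type whose large-time mass computes to $4/\sqrt\pi$. For part (a) the same object serves as a deterministic upper envelope valid for every admissible strategy, accessed through a suitable supermartingale observable.

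\emph{Part (b), push-the-laggard.} Introduce the rescaled empirical measure $\emK$ on $\Rp$ (mass normalized by $K^{-1/2}$ under an appropriate diffusive time scaling). Itô's formula against a smooth $\testf$ with $\testf(0)=0$ should yield a decomposition of the form
\begin{align*}
\langle \emK_t, \testf \rangle
= \langle \emK_0, \testf \rangle
+ \int_0^t \bigl[ \tfrac12 \langle \emK_s, \testf'' \rangle + \testf'(\z^K(s)) \bigr] ds
+ \mg^K_t(\testf),
\end{align*}
where $\z^K$ is the rescaled laggard and $\mg^K_t$ is a martingale controlled via \ac{BDG}. Tightness in Skorohod space, combined with identification via the resulting martingale problem, characterize the limit as the unique weak solution of a Stefan-type PDE $\partial_t \rho = \tfrac12 \partial_{xx}\rho$ on $\{x > \z(t)\}$ with absorbing condition $\rho(0,t)=0$ and a balance condition at $\z(t)$ encoding the unit drift. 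Large-time asymptotics of this PDE give the explicit stationary profile in terms of $\erf$, with $\int_{\Rp} \rho_\infty(x) dx = 4/\sqrt\pi$, delivering convergence in probability.

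\emph{Part (a), universal upper bound.} I would look for a $\testf: \Rp \to \Rp$ with $\testf(0)=0$ making $\sum_i \testf(X_i(t))$ a supermartingale under \emph{every} admissible strategy. Because $\phi_i \geq 0$ and $\sum_i \phi_i \leq 1$, the worst-case drift of this sum is $\max_i \testf'(X_i) + \tfrac12 \sum_i \testf''(X_i)$, so $\testf$ should be increasing, concave, and ideally coincide with (a cut-off of) the stationary profile coming from part (b). Comparing $\testf$ with a suitable indicator of $\Rp$ and invoking Doob's maximal inequality on the supermartingale then yields $\Uc_K(\infty) \leq 4/\sqrt\pi + o(1)$ with high probability, regardless of the strategy.

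\emph{Concentration and main obstacle.} The polynomial tails in \eqref{eq:aldous:upbd} and \eqref{eq:aldous:optl} are obtained by applying \ac{BDG} at moment order $2n$ to the martingales appearing above. The noticeably weaker exponent $\gamma < 1/96$ in (b), versus $\gamma < 1/4$ in (a), reflects a chain of smoothing, regularity and PDE-approximation estimates needed only in (b), each costing a polynomial factor; part (a) passes through a single observable and loses much less. The principal obstacle I anticipate is the rigorous analysis of the free-boundary PDE in part (b): establishing well-posedness, quantitative regularity of the moving boundary $\z(t)$, and a quantitative rate of convergence to the stationary profile, all made delicate by the singular concentration of the unit drift at $\z(t)$.
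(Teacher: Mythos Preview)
Your proposal sketches a plausible general strategy but diverges from the paper's actual argument in both parts, and in each case the divergence hides a real gap.

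\textbf{Part (a).} You look for a time-independent, increasing, concave $\testf$ making $\sum_i\testf(X_i(t))$ a supermartingale. The paper does something sharper: it applies It\^o's formula with the \emph{time-dependent} test function $\testf(t-s,y,x)=\erf(t-s,y-x)-\erfc(t-s,y+x)$, the tail of an absorbed Brownian motion. Because this $\testf$ solves the backward heat equation and vanishes at $y=0$, the resulting identity (Proposition~\ref{prop:intXY}\ref{enu:intX}) is \emph{exact} rather than an inequality. Evaluating at $t=\tfrac12$, $x=0$, the only place the strategy enters is the term $\sum_i\int_0^{1/2}\phi_i^K(s)\,\pN(\tfrac12-s,X_i^K(s),0)\,ds$, and the pointwise bound $\pN(\tfrac12-s,y,0)\le 2p(\tfrac12-s,0)$ together with $\sum_i\phi_i\le 1$ immediately gives $\Uc_K(\tfrac12,0)\le 2p(\tfrac12,0)+\int_0^{1/2}2p(\tfrac12-s,0)\,ds+\rd_K=\Ucs(\tfrac12,0)+\rd_K=\tfrac{4}{\sqrt\pi}+\rd_K$. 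The sharp constant falls out because the identity is tailored to match $\Ucs$. With a static concave $\testf$ it is not clear how you would recover the exact constant $4/\sqrt\pi$, nor how you would balance $\max_i\testf'(X_i)$ against $\tfrac12\sum_i\testf''(X_i)$ without $K$-dependence in $\testf$.

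\textbf{Part (b).} Three points. First, the paper does not argue via tightness and martingale-problem identification; it derives integral identities for $\Uc_K$ and for auxiliary Atlas models that mimic \eqref{eq:Ucs:move}--\eqref{eq:zeq} directly, then proves quantitative closeness via moment bounds and a stability lemma (Lemma~\ref{lem:pStef}) for the integral equation. This is what produces the polynomial rates; a compactness argument would give only convergence in law. Second, there is no ``stationary profile'' with mass $4/\sqrt\pi$: the quantity $\Ucs(t,\z(t))$ equals $4/\sqrt\pi$ for \emph{every} $t\ge\tfrac12$ by a conservation identity (Lemma~\ref{lem:Us}), so no large-time PDE analysis is needed. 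Third --- and this is the step your outline is missing entirely --- one must show that with high probability \emph{no particle is ever absorbed after some finite time}. The paper does this by coupling $(X(s+\tfrac12K))_{s\ge0}$ below an Atlas model $\Ylw$ (Lemma~\ref{lem:couple}), proving $\Wlw_K\to\z$ quantitatively, and then running a geometric-scale argument over $K2^m$ to show $\inf_{s\ge 1/2}\Wlw_K(s)>0$. Without such an argument the hydrodynamic limit on any finite window does not by itself control $\Uc_K(\infty)$.

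Finally, the ``principal obstacle'' you flag --- classical regularity of the free boundary and long-time convergence of the Stefan problem --- is deliberately avoided: the paper works with the integral formulation \eqref{eq:Ucs:move}--\eqref{eq:zeq} throughout and never needs $\z$ to be $\Csp^1$ or the density to be smooth up to the boundary.
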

\begin{remark}
While the exponent $ \frac14^- $ of the error term in Theorem~\ref{thm:aldous}\ref{enu:aldous:upbd}
(originating from the control on the relevant martingales) 
is optimal, the choice of exponent $ \gamma\in(0,\frac{1}{96}) $ 
in Theorem~\ref{thm:aldous}\ref{enu:aldous:optl} is purely technical.
The latter may be improved by establishing sharper estimates, 
which we do not pursue in this paper.
\end{remark}

Theorem~\ref{thm:aldous} resolves Aldous' conjecture 
\cite[Conjecture~2]{aldous02} in a slightly different form.
The intuition leading to such a theorem, 
as well as the the main ingredient of proving it,
is the hydrodynamic limit picture given in \cite{aldous02}.
To be more precise,
we consider the diffusively scaled process $ \XKi(t) := \frac{1}{\sqrt{K}} X_i(tK) $
and let $ Z_K(t) := \tfrac{1}{\sqrt{K}} Z(tK) $
denote the scaled process of the laggard.
Consider further the scaled complementary distribution function
\begin{align}\label{eq:Uc}
	\Uc_K(t,x) := \tfrac{1}{\sqrt{K}} \# \big\{ \XKi(t) >x \big\},
\end{align}
and let $ p(t,x) := \frac{1}{\sqrt{2\pi t}} \exp(-\frac{x^2}{2t}) $ 
denote the standard heat kernel.
Under the push-the-laggard strategy,
we expect $ (\Uc_K(t,x),Z_K(t)) $ to be well-approximated by $ (\Ucs(t,x), \z(t)) $.
Here $ \Ucs(t,x) $ and $ \z(t) $ are deterministic functions,
which are defined in \emph{two separated phases} as follows.
For $ t \leq \frac12 $, the \textbf{absorption phase}, we define
\begin{align}
	\label{eq:Ucs:abs}
	&
	\Ucs(t,x) := 2 p(t,x) + \int_0^t 2p(t-s,x) ds,
	\quad
	\forall t \leq \tfrac12, \ x \geq 0,
\\
	&
	\label{eq:z=0}
	\z(t) :=0,
	\quad
	\forall t \leq \tfrac12.
\end{align}
For $ t > \frac12 $, the \textbf{moving boundary phase}, 
letting $ \pN(t,y,x) := p(t,y-x) + p(t,y+x) $ denote the Neumann heat kernel,
we define
\begin{align}
	\label{eq:Ucs:move}
	\Ucs(t,x) := 2 p(t,x) + \int_0^t \pN(t-s,\z(s),x) ds,
	\quad
	\forall t \geq \tfrac12, \ x \geq \z(t),
\end{align}
where  $ \z(t) $ is the unique solution to the following integral equation:
\begin{align}\label{eq:zeq}
	\left\{\begin{array}{l@{}l}
			\z(\Cdot+\tfrac12) \in \Csp(\bbR_+), \ \text{nondecreasing },
			\z(\tfrac12) = 0,	
			\vspace{5pt}			
		\\
			\displaystyle
			\int_0^\infty p(t-\tfrac12,\z(t)-y) 
			\big( \Ucs(\tfrac12,0) - \Ucs(\tfrac12,y) \big) dy 
		\\
			\displaystyle
			\quad\quad\quad\quad\quad
			= \int_\frac12^t p(t-s,\z(t)-\z(s)) ds,
			\quad \forall t \in (\tfrac12,\infty),
	\end{array}\right.
\end{align}
As we show in Section~\ref{sect:Stef},
the integral equation~\eqref{eq:zeq} admits a unique solution.

The pair $ (\Ucs,\z) $, defined by \eqref{eq:Ucs:abs}--\eqref{eq:zeq},
is closely related to certain \ac{PDE} problems, as follows.
Let $ \erfc(t,y) := \Pr(B(t) > y) $ denote the Brownian tail distribution function.
For $ t\leq\frac12 $,
a straightforward calculation (see Remark~\ref{rmk:cal})
shows that the function $ \Ucs(t,x) $ in \eqref{eq:Ucs:abs}
is written as the tail distribution function of $ u_1(t,x) $:
\begin{align}
	\label{eq:U1}
	\Ucs(t,x)
	=
	\int_{x}^\infty u_1(t,y) dy,
	\quad
	\forall t\leq \tfrac12,
\end{align}
where $ u_1(t,x) $ is defined as
\begin{align}
	\label{eq:u1}
	u_1(t,x) &:= -2\partial_x p(t,x) + 4 \erfc(t,x).
\end{align}
It is straightforward to check that this density function $ u_1 $
solves the heat equation on $ x>0 $ with a boundary condition $ u_1(t,0)=2 $:
\begin{subequations}\label{eq:PDE<}
	\begin{align}
	&
	\label{eq:HE<}
	\partial_t u_1 = \tfrac12 \partial_{xx} u_1 	\quad \forall 0< t< \tfrac12, \ x> 0,
\\
	&
	\label{eq:DiriBC<}
	u_1(t,0) =2,	\quad \forall 0< t<\tfrac12,		
\\
	&
	\label{eq:PDEic}
	\lim_{t\downarrow 0} (u_1(t,x) + 2 \partial_x p(t,x)) =0,	\quad\forall x\geq 0.
	\end{align}
\end{subequations}
For $ t >\frac12 $, we consider the following \textbf{Stefan problem},
a \ac{PDE} with a \emph{moving boundary}:
\begin{subequations}\label{eq:PDE>}
	\begin{align}
	&
	\label{eq:z2}
	z_2\in \Csp([\tfrac12,\infty)) \text{ nondecreasing},  \ z_2(\tfrac12)=0, 
	\\
	&
	\label{eq:HE>}
	\partial_t u_2 = \tfrac12 \partial_{xx} u_2, \quad \forall t > \tfrac12, \ x > z_2(t)
	\\
	&
	u_2(\tfrac12,x) = u_1(\tfrac12,x),	\quad	\forall x\geq 0,
	\\
	&
	\label{eq:DiriBC>}
	u_2(t,z_2(t)) = 2,	\quad \forall t\geq \tfrac12,
	\\
	&
	\label{eq:StefBC}
	2 \tfrac{d~}{dt}z_2(t) + \tfrac12 \partial_x u_2(t,z_2(t)) = 0,	\quad \forall t> \tfrac12.
	\end{align}
\end{subequations}
As we show in Lemma~\ref{lem:StefInt}, 
for each sufficiently smooth solution $ (u_2,z_2) $ to \eqref{eq:PDE>},
the functions $ \Ucs(t,x) := \int_{x }^\infty u_2(t,y) \ind_{\{ y\geq \z(t)\}} dy $ 
and $ \z(t) := z_2(t) $ satisfy \eqref{eq:Ucs:move}--\eqref{eq:zeq} for $ t \geq \frac12 $.

\begin{remark}\label{rmk:cal}
To see why \eqref{eq:U1} holds,
differentiate \eqref{eq:Ucs:abs} in $ x $ to obtain
$ \partial_x \Ucs(t,x) = 2\partial_x p(t,x) - 2 \int_0^t \frac{x}{t-s} p(t-s,x) ds $.
Within the last integral,
performing the change of variable $ y:= \frac{x}{\sqrt{t-s}} $,
we see that $ \int_0^t \frac{x}{t-s} p(t-s,x) ds = 2\erfc(t-s,x) $.
From this \eqref{eq:U1} follows.
\end{remark}

\begin{remark}\label{rmk:Stefan}
Note that for Equation~\eqref{eq:PDE>}
to make sense classically,
one needs $ u_2(t,x) $ to be $ \Csp^1 $ \emph{up to}
the boundary $ \{(t,z_2(t)):t\geq 0\} $ 
and needs $ z_2(t) $ to be $ \Csp^1 $.
Here, instead of defining the hydrodynamic limit
classically through \eqref{eq:PDE>},
we take the integral identity and integral equation 
\eqref{eq:Ucs:move}--\eqref{eq:zeq} 
as the \emph{definition} of the hydrodynamic limit equation.
This formulation is more convenient for our purpose,
and in particular it requires
neither the smoothness of $ \u $ onto the boundary
nor the smoothness of $ \z $.
We note that, however, it should be possible 
to establish classical solutions to \eqref{eq:PDE>},
by converting \eqref{eq:PDE>} to a parabolic variational inequality.
See, for example, \cite{friedman10}. We do not pursue this direction here.
\end{remark}

Before stating the precise result on hydrodynamic limit,
we explain the intuition of how \eqref{eq:PDE<}--\eqref{eq:PDE>}
arise from the behavior of the particle system.
Indeed, the heat equations \eqref{eq:HE<} and \eqref{eq:HE>}
model the diffusive behavior of $ (X^K_i(t))_i $ away from $ Z_K(t) $.
In view of the equilibrium measure of gaps of the infinite Atlas model \cite{pal08},
near $ Z_K(t) $ we expect the particle density to be $ 2 $ to balance the drift exerted on $ Z_K(t) $,
yielding the boundary conditions \eqref{eq:DiriBC<} and \eqref{eq:DiriBC>}.
The function $ -2\partial_x p(t,x) $ is the average density
of the system without the drift.
(The singularity of $ -2\partial_x p(t,x) $ at $ t=0 $ captures the overabundance 
of particles at $ t=0 $ compared to the scaling $ K^{1/2} $.)
As the drift affects little of the particle density near $ t=0 $,
we expect the entrance law~\eqref{eq:PDEic}.
The absorption phase ($ t\leq \frac12 $) describes
the initial state of the particle system with a high density,
where particles are constantly being absorbed,
yielding a fixed boundary $ Z_K(t) \approx 0 $.
Under the push-the-laggard strategy,
the system enters a new phase at $ t\approx \frac12 $,
where the density of particles is low enough ($ \leq 2 $ everywhere)
so that the drift carries all remaining particles away from $ 0 $.
This results in a moving boundary $ Z_K(t) $,
with an additional boundary condition~\eqref{eq:StefBC},
which simply paraphrases the conservation of particles
$ \frac{d~}{dt} \int_{z_2(t)}^\infty u_2(t,y) dy =0 $.

The following is our result on the hydrodynamic limit of $ (\Uc_K(t,x), Z_K(t)) $:
\begin{theorem}[hydrodynamic limit]
\label{thm:hydro}
Under the push-the-laggard strategy,
for any fixed $ \gamma\in(0,\frac{1}{96}) $ and $ T,n<\infty $,
there exists $ C=C(T,\gamma,n)<\infty $ such that
\begin{align}
	&
	\label{eq:hydro:U}
	\Pr \Big( 
		\sup_{ t\in[0,T], x\in\bbR } 
		\big\{ |\Uc_K(t,x)-\Ucs(t,x)| t^{\frac{3}{4}} \big\} \leq CK^{-\gamma} 
	\Big)
	\geq
	1 - CK^{-n},
	\quad
	\forall K<\infty,
\\
	&
	\label{eq:hydro:Z}
	\Pr \Big( \sup_{t\in[0,T]}| Z_K(t)-\z(t)| \leq CK^{-\gamma} \Big)
	\geq
	1 - CK^{-n},
	\quad
	\forall K<\infty.
	\end{align}
\end{theorem}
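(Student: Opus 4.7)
The plan is to derive a stochastic integral identity for the empirical tail process $\Uc_K(t,x)$ that mirrors the deterministic identities \eqref{eq:Ucs:abs}--\eqref{eq:Ucs:move}, and then close the loop with martingale concentration and the stability of \eqref{eq:zeq}. Writing $\emK_t := K^{-1/2}\sum_{i:\XKi(t)>0}\delta_{\XKi(t)}$, I would apply It\^o's formula to $\sum_i \Psi(s,\XKi(s))$ for a time-dependent test function $\Psi$ satisfying the backward heat equation $\partial_s \Psi + \tfrac12 \partial_{yy}\Psi = 0$ on $\{y\geq 0\}$ with a Neumann condition at $y=0$. The bulk terms then cancel and only boundary contributions survive: the drift term, concentrated at the laggard $Z_K(s)$ by the push-the-laggard structure \eqref{eq:pushlaggard:}; the absorption term at the origin; and a martingale $\mg_t$ whose quadratic variation is $O(K^{-1})$. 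Choosing $\Psi(s,y)$ to be the tail of $\pN(t-s,y,\cdot)$ past $x$ yields a $K$-dependent analogue of \eqref{eq:Ucs:move}.

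The key structural input is that, under push-the-laggard, the local density just above the laggard equilibrates to the value $2$, echoing the equilibrium gap distribution of the infinite Atlas model \cite{pal08}. Quantifying this equilibration is what converts the pointwise drift term $K^{-1/2}\partial_y\Psi(s,Z_K(s))$ into the Stefan-type condition \eqref{eq:StefBC} after integration by parts, and is the microscopic counterpart of the boundary conditions \eqref{eq:DiriBC<}--\eqref{eq:DiriBC>}. In the absorption phase $t\leq\tfrac12$ the laggard stays essentially at the origin, the two boundary contributions merge, and I expect to recover the discrete analogue of \eqref{eq:Ucs:abs}. The weight $t^{3/4}$ in \eqref{eq:hydro:U} is tailored to absorb the $t^{-1/2}$ singularity of $\Ucs$ near $t=0$ coming from the initial concentration of $K$ particles at $x=K^{-1/2}$ after scaling.

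In the moving-boundary phase $t>\tfrac12$ the pair $(\Uc_K,Z_K)$ has to be controlled simultaneously, since each appears inside the defining identity of the other. The plan is a bootstrap on a fine time grid $\{t_j\}\subset[\tfrac12,T]$: assuming inductively that $\sup_x|\Uc_K(t_{j'},x)-\Ucs(t_{j'},x)| + |Z_K(t_{j'})-\z(t_{j'})|\leq K^{-\gamma}$ for all $j'\leq j$, I would feed this bound into the discrete integral identity on $[t_j,t_{j+1}]$ and invoke Lipschitz dependence of the solution to \eqref{eq:zeq} on its data (available via the uniqueness argument of Section~\ref{sect:Stef}) to propagate the bound to $t_{j+1}$. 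The martingale fluctuations along this bootstrap are handled by \ac{BDG}, giving sub-Gaussian tails and the stated $1-CK^{-n}$ probability bound; the cumulative Gronwall-type amplification along $[0,T]$ is presumably the source of the loss from the exponent $\tfrac14^-$ in Theorem~\ref{thm:aldous}\ref{enu:aldous:upbd} down to $\tfrac{1}{96}$ in Theorem~\ref{thm:hydro}.

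The hardest step, I expect, will be the quantitative boundary equilibration near the laggard: controlling the local particle density in an $O(K^{-1/2})$-neighborhood of a random, possibly moving, boundary uniformly in $s\in[0,T]$ is the essential bridge between the microscopic push-the-laggard dynamics and the macroscopic Stefan description, and is what makes this problem substantially harder than a standard hydrodynamic limit with a fixed boundary.
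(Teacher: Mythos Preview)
Your framework---integral identities via It\^o's formula with a backward-heat test function, martingale control by \ac{BDG}, and stability of the integral equation \eqref{eq:zeq}---matches the paper's scaffolding in Sections~\ref{sect:Int}--\ref{sect:Stef}. The reduction of \eqref{eq:hydro:U} to \eqref{eq:hydro:Z} via heat-kernel estimates is also exactly what the paper does at the start of Section~\ref{sect:hydro}.

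Where you diverge from the paper is in the moving-boundary phase, and this is substantive. You propose a bootstrap on a time grid, feeding the inductive error on $(\Uc_K,Z_K)$ back through the integral identity and relying on Lipschitz stability of \eqref{eq:zeq} plus Gronwall. The paper does \emph{not} do this. Instead it introduces two auxiliary Atlas models $\Yup,\Ylw$ (Section~\ref{sect:mvbdy}) with carefully chosen Poisson initial data, and uses the monotone coupling of Sarantsev (Lemma~\ref{lem:sar:cmp}) to sandwich $Z_K$ between their laggards $\Wlw_K$ and $\Wup_K$. Each Atlas laggard is then shown to satisfy a perturbed version of \eqref{eq:zeq} (equations \eqref{eq:Wup:intEq}, \eqref{eq:Wlw:intEq}), and the stability Lemma~\ref{lem:pStef} closes the argument. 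The coupling also yields, as a by-product, the crucial fact that no absorption occurs after $t\approx\tfrac12$ (equation \eqref{eq:tabs:bd}), which is what justifies treating $X$ as an Atlas model in this phase.

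The point is that the coupling \emph{completely sidesteps} the step you flag as hardest: quantitative boundary equilibration of the density near the moving laggard. The paper never proves that the local density is $\approx 2$ near $Z_K(t)$; instead the Atlas comparison transfers the problem to systems where the laggard itself obeys a tractable integral equation. Your bootstrap would require exactly the equilibration estimate you identify as the obstacle, and it is not clear how to obtain it at the precision needed without something like the coupling---a direct Gronwall iteration would also need $Z_K$ to satisfy an approximate version of \eqref{eq:zeq}, which is not automatic since $Z_K$ can hit zero and the system can lose particles. So while your outline is coherent as a strategy, the paper's route is genuinely different and buys a way around the microscopic boundary analysis.
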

\begin{remark}
The factor of $ t^{\frac{3}{4}} $ in \eqref{eq:hydro:U} is in place 
to regulate the singularity of $ \Uc_K(t,x) $ and $ \Ucs(t,x) $ near $ t= 0 $.
Indeed, with $ \Ucs(t,x) $ defined in \eqref{eq:U1} for $ t\leq\frac12 $,
it is standard to verify that $ \sup_{x\in\bbR} \Ucs(t,x) $
diverges as $ \frac{2}{\sqrt{2\pi t}} $ as $ t\downarrow 0 $.
With $ \Uc_K(t,x) $ defined in \eqref{eq:Uc},
we have that $ \Uc_K(0,x) = \sqrt{K} \ind_{\{x<1/\sqrt{K}\}} $,
which diverges at $ x=0 $ as $ K\to\infty $.
This singularity at $ t=0 $ of $ \Uc_K(t,x) $ propagates into $ t>0 $,
resulting in a power law singularity of the form $ |t|^{-\frac12} $.

The choice of the exponent $ \frac34 $ in \eqref{eq:hydro:U} is technical,
and may be sharpened to $ \frac12 $ as discussed in the preceding,
but we do not pursue this direction here.
\end{remark}

Under the push-the-laggard strategy \eqref{eq:pushlaggard},
the process $ (X^K_i(t))_i $ is closely related 
to the Atlas model \cite{fernholz02}.
The latter is a simple special case of diffusions with rank-dependent drift:
see \cite{banner05,chatterjee10,chatterjee11,ichiba11,ichiba10,ichiba13},
for their ergodicity and sample path properties,
and \cite{dembo12,pal14} for their large deviations properties as the dimension tends to infinity.
In particular, the hydrodynamic limit and fluctuations
of the Atlas-type model have been analyzed 
in \cite{cabezas15, dembo15, hernandez15}.

Here we take one step further and analyze
the combined effect of rank-dependent drift and absorption,
whereby demonstrating the two-phase behavior.
With the absorption at $ x=0 $,
previous methods of analyzing the large scale behaviors 
of diffusions with rank-dependent drift do not apply.
In particular, the challenge of proving Theorem~\ref{thm:hydro}
originates from the lack of invariant measure (for the absorption phase)
and the singularity at $ t=0 $,
where a rapid transition from $ K $ particles
to an order of $ K^{1/2} $ particles occurs.
Here we solve the problem by adopting a \emph{new} method
of exploiting certain  integral identities of the particle system
that mimic \eqref{eq:Ucs:abs}--\eqref{eq:zeq}.
Even though here we mainly focus on
the push-the-laggard strategy under the initial condition $ X_i(0)=1 $, $ \forall i $,
the integral identities apply to general rank-dependent drifts and initial conditions,
and may be used for analyzing for general models with both rank-dependent drifts and absorption.

\subsection*{Outline}
In Section~\ref{sect:Int},
we develop certain integral identities of the particle system $ X $
that are crucial for our analysis,
and in Section~\ref{sect:Stef},
we establish the necessary tools pertaining to the integral equation~\eqref{eq:zeq}.
Based on results obtained in Sections~\ref{sect:Int}--\ref{sect:Stef},
in Sections~\ref{sect:hydro} and \ref{sect:aldous}
we prove Theorems~\ref{thm:hydro} and \ref{thm:aldous}, respectively.

\subsection*{Acknowledgment}
We thank David Aldous for suggesting this problem for research.
WT thanks Jim Pitman for helpful discussion throughout this work,
and Craig Evans for pointing out the relation between \eqref{eq:PDE>}
and parabolic variational inequalities.
LCT thanks Amir Dembo for enlightening discussion at the early stage of this work.
LCT was partially supported by the NSF through DMS-0709248.

We thank the anonymous reviewers for their careful reading of the manuscript.

\section{Integral Identities}
\label{sect:Int}
Recall that $ \pN(t,x,y) $ denotes the Neumann heat kernel,
and let $ \erf(t,x) := \Pr(B(t)\leq x) = 1 - \erfc(t,x) $ 
denote the Brownian distribution function.
With $ \z(t) $ as in \eqref{eq:z=0} and \eqref{eq:zeq},
we unify the integral identities \eqref{eq:Ucs:abs} and \eqref{eq:Ucs:move} 
into a single expression as
\begin{align}\label{eq:Ucs}
	\Ucs(t,x) = 2 p(t,x) + \int_0^t \pN(t-s,\z(s),x) ds,
	\quad
	\forall t >0, \ x \geq \z(t).
\end{align}
Essential to our proof of Theorems~\ref{thm:aldous} and \ref{thm:hydro}
are certain integral identities of the \emph{particle system} $ X=(X(t);t\geq 0) $
that mimic the integral identities~\eqref{eq:Ucs}.
This section is devoted to deriving such identities of the particle system,
particularly Proposition~\ref{prop:intXY} in the following.

As it turns out,
in addition to the particle system $ X $,
it is helpful to consider also the Atlas models.
We say that $ Y=(Y_i(t); t\geq 0 )_{i=1}^m $ is 
an \textbf{Atlas model} with $ m $ particles 
if it evolves according to the following system of stochastic differential equations:
\begin{equation} \label{eq:alt}
	dY_i(t)= \ind_\Set{ Y_i(t)=\W(t)} dt + dB_i(t) \quad 
	\text{ for } 1 \leq i \leq m,
	\quad
	\W(t) := \min\{ Y_i(t) \}.
\end{equation}
We similarly define 
the scaled processes $ \YKi(t) := \frac{1}{\sqrt{K}} Y_i(tK) $ 
and $ \W_K(t) := \frac{1}{\sqrt{K}} \W(tK) $.
Note that here $ K $ is just a scaling parameter, 
not necessarily related to the number of particles in $ Y $.

To state the first result of this section,
we first prepare some notations.
Define the scaled empirical measures of $ X $ and $ Y $ as:
\begin{align}
	\label{eq:em}
	\emK_{t}(\Cdot) 
	&:= 
	\frac{1}{\sqrt{K}} \sum\nolimits_{ \{i: X^K_i(t) >0\} } \delta_{X^K_i(t)}(\Cdot).
\\	
	\label{eq:emY}
	\emKY_{t}(\Cdot) 
	&:= \frac{1}{\sqrt{K}} \sum_{i} \delta_{\YKi(t)}(\Cdot).
\end{align}
For any fixed $ x\geq 0 $,
consider the tail distribution function 
$ \testf(t,y,x) := \Pr( B^\text{ab}_x(t) >y ) $, $ y>0 $,
of a Brownian motion $ B^\text{ab}_x $, 
starting at $ B^\text{ab}_x(0)=x $ and absorbed at $ 0 $.
More explicitly,
\begin{align}	
	\label{eq:testf}
	\testf(t,y,x) &:= \erf(t,y-x) - \erfc(t,y+x),
\end{align}
which is the unique solution to the following equation
\begin{subequations}
\label{eq:testf:Eq}
\begin{align}
	\label{eq:testf:HE}
	\partial_t \testf(t,y,x) &= \tfrac12 \partial_{yy} \testf(t,y,x), \forall t,y>0,
\\
	\testf(t,0,x) &= 0, \forall t >0,
\\
	\testf(0,y,x) &= \ind_{(x,\infty)}(y), \ \forall y>0.
\end{align}
\end{subequations}
Adopt the notations $ t_K:=t+\frac1K $, $ \tau^K_i := K^{-1}\tau_i $
and $ \phi^K_i(t) := \phi_i(Kt) $ hereafter.

\begin{lemma}
\label{lem:int}
\begin{enumerate}[label=(\alph*)]
\item[]
\item For the particle system $ (X(t);t\geq 0) $,
under any strategy, we have the following integral identity:
\begin{align}
\label{eq:int:abs:e}
\begin{split}
	\langle \emK_{t}, &\testf(\tfrac1K,\Cdot,x) \rangle 
	= 
	\Gc_K(t_K,x)
\\
	&+ \sum_{i=1}^K \int_{0}^{t} \phi^K_i(s) \pN(t_K-s,\XKi(s),x) ds
	+ \mg_{K}(t,x),
	\quad
	\forall t \in\bbR_+, \ x\geq 0,
\end{split}
\end{align}
where
\begin{align}
	\label{eq:Gc}
	\Gc_K(t,x) &:= 
	\sqrt{K} \testf(t,\tfrac{1}{\sqrt{K}},x),
\\
	\label{eq:mg}
	\mg_{K}(t,x) 
	&:= 
	\frac{1}{\sqrt{K}} \sum_{i=1}^K \int_{0}^{t\wedge\tKi} \pN(t_K-s,\XKi(s),x) d\BKi(s).
\end{align}
\item Let  $ (Y_i(t);t\geq 0)_{i} $ be an Altas model.
We have the following integral identity:
\begin{align}
\begin{split}
	\langle \emKY_{t}, &\erf(\tfrac{1}{K}, x-\Cdot) \rangle 
	= 
	\langle \emKY_{0}, \erf(t_K,x-\Cdot) \rangle 
\\
	\label{eq:int:atl:K}
	&
	- \int_{0}^{t} p(t_K-s,x-\W_K(s)) ds
	- \mgY_{K}(t,x),
	\quad
	\forall t\in\bbR_+, \ x\in\bbR,
\end{split}
\end{align}
where
\begin{align}
	\label{eq:mgY}
	\mgY_{K}(t,x) 
	&:= \frac{1}{\sqrt{K}} \sum_{i} \int_{0}^t p(t_K-s,Y^K_i(s)-x) dB^K_i(s).
\end{align}
\end{enumerate}
\end{lemma}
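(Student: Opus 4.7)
The plan is to derive both identities from Itô's formula applied to test functions that solve the heat equation in time-reversed form, so that the $ds$-terms in the Itô expansion cancel and only the drift and martingale contributions survive. Integrating in time, summing over particles, and rescaling by $1/\sqrt{K}$ then produces the stated identities.

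For part (a), fix $x\geq 0$ and recall from \eqref{eq:testf:Eq} that $\testf(t,y,x)$ satisfies the heat equation on $y>0$ with $\testf(\cdot,0,x)\equiv 0$; a direct computation yields $\partial_y\testf(t,y,x) = p(t,y-x)+p(t,y+x) = \pN(t,y,x)$. Brownian scaling gives
\begin{align*}
	\XKi(t) = \tfrac{1}{\sqrt{K}} + \BKi(t\wedge\tKi) + \sqrt{K}\int_0^{t\wedge\tKi}\phi^K_i(s)\,ds,
\end{align*}
so applying Itô's formula to $s\mapsto\testf(t_K-s,\XKi(s),x)$ on $[0,t\wedge\tKi]$, the $ds$-terms cancel through the heat equation and leave
\begin{align*}
	d\testf(t_K-s,\XKi(s),x) = \pN(t_K-s,\XKi(s),x)\,\big(d\BKi(s) + \sqrt{K}\,\phi^K_i(s)\,ds\big).
\end{align*}
At $s=t\wedge\tKi$, the Dirichlet condition $\testf(\cdot,0,x)=0$ kills the terminal value on $\{\tKi\leq t\}$, leaving $\ind_{\{\tKi>t\}}\testf(\tfrac{1}{K},\XKi(t),x)$; the initial value is $\testf(t_K,\tfrac{1}{\sqrt{K}},x)$. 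Summing over $i$, dividing by $\sqrt{K}$, and identifying $\sqrt{K}\,\testf(t_K,\tfrac{1}{\sqrt{K}},x) = \Gc_K(t_K,x)$ recovers \eqref{eq:int:abs:e}, with the stochastic integral being exactly $\mg_K(t,x)$.

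For part (b), apply the same idea to $s\mapsto\erf(t_K-s,x-\YKi(s))$. Since $\erf(t,\cdot)$ solves the heat equation with $\partial_z\erf(t,z)=p(t,z)$, Itô's formula again eliminates the $ds$-terms and yields $d\erf = -p(t_K-s,x-\YKi(s))\,d\YKi(s)$, where the minus sign arises from the argument $x-y$. Substituting the scaled Atlas dynamics $d\YKi(s) = \sqrt{K}\,\ind_{\{\YKi(s)=\W_K(s)\}}\,ds + d\BKi(s)$ and summing over $i$, the drift contribution collapses to $\int_0^t p(t_K-s,x-\W_K(s))\,ds$ because almost surely a single index realizes the minimum at almost every time $s$; evenness of $p$ lets one rewrite $p(t_K-s,x-\YKi)$ as $p(t_K-s,\YKi-x)$ in the martingale piece, yielding \eqref{eq:int:atl:K}. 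The main care is bookkeeping: one must verify that the Dirichlet datum $\testf(\cdot,0,x)=0$ exactly absorbs the boundary contribution from extinct particles (so no local-time term appears despite absorption), and that the shift $t_K=t+\tfrac{1}{K}$ keeps all kernel evaluations away from the singular time $0$ so that Itô's formula applies without regularization.
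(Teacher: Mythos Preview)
Your proof is correct and follows essentially the same approach as the paper: apply It\^{o}'s formula to the time-reversed test functions $\testf(t_K-s,\XKi(s),x)$ and $\erf(t_K-s,x-\YKi(s))$, use the heat equation to cancel the $ds$-terms, and exploit the Dirichlet condition $\testf(\cdot,0,x)=0$ to handle absorbed particles. Your write-up actually supplies more detail than the paper's terse proof, including the explicit identification $\partial_y\testf=\pN$ and the justification for why no local-time term appears.
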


\begin{remark}
\label{rmk:int:meaning}
To motivate our analysis in the following,
here we explain the meaning of each term in the integral identity~\eqref{eq:int:abs:e}.
From the definitions \eqref{eq:Uc} and  \eqref{eq:em}
of $ \Uc_K(t,x) $ and $ \emK_{t} $,
we have that $ \lim_{\e\to 0 } \langle \emK_{t}, \testf(\e,\Cdot,x) \rangle = \Uc_K(t,x) $,
so it is reasonable to expect the term 
$ \langle \emK_{t}, \testf(\tfrac1K,\Cdot,x) \rangle $ on the l.h.s.\
to approximate $ \Uc_K(t,x) $ as $ K\to\infty $.

Next,
consider a system $ (X^{\text{ab}}_i(t);t\geq 0)_{i=1}^K $ of independent Brownian particles
starting at $ x=1 $ and absorbed at $ x=0 $, \emph{without} drifts.
Letting $ X^{\text{ab},K}_i(t) := \frac{1}{\sqrt{K}} X^{\text{ab}}_i(Kt) $ denote the diffusively scaled process, 
with the corresponding scaled tailed distribution function
\begin{align}
	\label{eq:Ucab}
	\Uc^{\text{ab}}_K(t,x) := \tfrac{1}{\sqrt{K}} \# \{ i: X^{\text{ab},K}_i(t) > x \},
\end{align}
it is standard to show that
\begin{align}
	\label{eq:Ucab:Gc}
	\Ex(\Uc^{\text{ab}}_K(t,x)) = \sqrt{K} \Pr( X^\text{ab}_1(Kt) > \sqrt{K} x ) 
	= \sqrt{K} \testf(Kt,1,\sqrt{K}x)  
	= \Gc_K(t,x). 
\end{align}
That is, the term $ \Gc_K(t,x) $ on the r.h.s.\ \eqref{eq:int:abs:e}
accounts for the contribution (in expectation) of the \emph{absorption}.

Subsequent, the time integral term
$ \sum_{i=1}^K \int_0^{\tKi}(\ldots) ds $ arises from the contribution
of the drifts $ (\phi_i(t))_{i=1}^K $ allocated to the particles,
while the martingale term $ \mg(t,x) $ encodes the random fluctuation
due to the Brownian nature of the particles.
\end{remark}

\begin{proof}
Under the diffusive scaling $ \XKi(t) := \frac{1}{\sqrt{K}} X_i(tK) $,
we rewrite the SDE~\eqref{eq:X:SDE} as
\begin{align}\label{eq:XK:SDE}
	d\XKi(t) = \phi^K_i(t) \sqrt{K} d(t\wedge\tKi) + d \BKi(t\wedge\tKi).
\end{align}
Fixing arbitrary $ t<\infty $, $ x\geq 0 $,
with $ \testf $ solving~\eqref{eq:testf:HE},
we apply It\^{o}'s formula to $ F_i(s) := \testf(t_K-s,\XKi(s),x) $ using \eqref{eq:XK:SDE} to obtain
\begin{align}\label{eq:int:abs:F}
	F_i(t\wedge\tKi) - F_i(0)
	=
	\sqrt{K} \int_{0}^{t\wedge\tKi} \phi^K_i(s) \pN(t_K-s,\XKi(s),x) ds
	+
	\mg_{i,K}(t,x),
\end{align}
where $ \mg_{i,K}(t,x) := \int_{0}^{t\wedge\tKi} \pN(t_K-s,\XKi(s),x) d\BKi(s) $.
With $ \testf(s,0,x)=0 $, we have $ F_i(t\wedge\tKi) = \testf(\frac1K,\XKi(t),x) $.
Using this in \eqref{eq:int:abs:F}, summing the result over $ i $, 
and dividing both sides by $ \sqrt{K} $,
we conclude the desired identity~\eqref{eq:int:abs:e}.
Similarly, the identity~\eqref{eq:int:atl}
follows by applying It\^{o}'s formula with the test function $ \erf(t_K-s,y-x) $.
\end{proof}

Based on the identities \eqref{eq:int:abs:e} and \eqref{eq:int:atl:K},
we proceed to establish bounds on the empirical measures $ \emK_{t} $ and $ \emKY_{t} $.
Hereafter, we use $ C=C(\alpha,\beta,\ldots)<\infty $ 
to denote a generic deterministic finite constant
that may change from line to line,
but depends only on the designated variables.
In the following, we will use the following estimates
of the heat kernel $ p(t,x) $.
The proof is standard and we omit it here.
\begin{align}
	&
	\label{eq:p:Holdx}
	|p(t,x) - p(t,x')| \leq C(\alpha) |x-x'|^{\alpha} t^{-\frac{1+\alpha}{2}},&  
	& \alpha\in(0,1],
\\
	&
	\label{eq:p:Holdt}
	|p(t,x) - p(t',x)| \leq C(\alpha) |t-t'|^{\frac{\alpha}{2}} (t')^{-\frac{1+\alpha}{2}},& & \alpha\in(0,1], \ t'<t<\infty.
\end{align}
We adopt the standard notations $ \Vert \xi \Vert_n := (\Ex|\xi|^n)^{\frac1n} $ 
for the $ L^n $-norm of a give random variable $ \xi $
and $ \norm f \norm_{L^\infty(\Omega)} := \sup_{x\in \Omega} |f(x)| $  
for the uniform norm over the designated region $ \Omega $.

\begin{lemma}\label{lem:emYbd}
Let $ (Y_i(t);t \geq 0)_{i} $ be an Atlas model.
The total number $ \#\{Y_i(0)\} $ of particles may be 
random but is independent of $ \sigma(Y_i(t)-Y_i(0);t\geq 0, i=1,\ldots) $.
Let $ \emKY_{t} $ to be as in \eqref{eq:emY}.
Assume $ (Y^K_i(0))_i $ satisfies the following initial condition:
given any $\alpha\in(0,1) $ and $ n <\infty $, there exist $ D_* ,D_{\alpha,n} <\infty $ such that
\begin{align}
	\label{eq:D*}
	\Pr\big( \#\{ Y_i(0) \} \leq K \big)
	&\geq 1 - \exp(-\tfrac{1}{D_*}K^{\frac12}),
\\
	\label{eq:Dan}
	\big\Vert \langle \emKY_{0},\ind_{[a,b]}\rangle \big\Vert_n	
	&\leq 
	D_{\alpha,n}|b-a|^{\alpha}, 
	\quad 
	\forall |b-a| \geq \tfrac{1}{\sqrt{K}}.
\end{align}	
For any given $ T<\infty $,
we have
\begin{align}
	\label{eq:emY:bd}
	&
	\Vert \langle \emKY_{s},\ind_{[a,b]}\rangle \Vert_{n} 
	\leq 
	C |b-a|^{\alpha} \Big( \big( \tfrac{|b-a|}{\sqrt{s_K}} \big)^{1-\alpha} +1 \Big), &
	&
	\forall \tfrac{1}{\sqrt{K}}\leq |b-a|,	\ s\leq T,	
\\
	\label{eq:emPY:bd}
	&
	\Vert \langle \emKY_{s}, p(t_K,\Cdot-x) \rangle \Vert_{n} 
	\leq 
	C t_K^{\frac{\alpha-1}{2}} \Big( \big( \tfrac{t_K}{s_K} \big)^{\frac{1-\alpha}{2}} +1 \Big), &
	&
	\forall x\in\bbR, \ s,t<T,
\end{align}
where $ C=C(T,\alpha,n,D_*, D_{\alpha,n})<\infty $.
\end{lemma}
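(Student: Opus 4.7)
My plan is to prove the density-type bound \eqref{eq:emPY:bd} first via an integral identity analogous to the one in Lemma~\ref{lem:int}(b), and then deduce the interval bound \eqref{eq:emY:bd} as a simple corollary via Gaussian domination.

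For \eqref{eq:emPY:bd}, I would repeat the It\^o computation of Lemma~\ref{lem:int}(b), but with the new test function $g(u,y) := p(s + t_K - u,\, y - x)$ for $u \in [0,s]$. Since $g$ solves the backward heat equation and $g(s,y) = p(t_K, y - x)$, the calculation yields
\begin{align*}
	\langle \emKY_s, p(t_K, \Cdot - x) \rangle
	= \langle \emKY_0, p(s + t_K, \Cdot - x) \rangle
	+ \int_0^s \partial_y p(s + t_K - u, \W_K(u) - x) \, du
	+ M_K(s,x),
\end{align*}
with $M_K(s,x) := \tfrac{1}{\sqrt K}\sum_i \int_0^s \partial_y p(s+t_K-u,\, Y^K_i(u) - x)\,dB^K_i(u)$. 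I would bound the three terms in $L^n$ separately. The initial term is handled by a dyadic-shell decomposition of $p(s+t_K,\Cdot - x)$ around $x$ at scale $\sqrt{s+t_K}\geq 1/\sqrt K$ and applying \eqref{eq:Dan} to each shell, yielding $\|\langle\emKY_0, p(s+t_K,\Cdot - x)\rangle\|_n \leq C(s+t_K)^{(\alpha-1)/2} \leq C\bigl(t_K^{(\alpha-1)/2} \wedge s_K^{(\alpha-1)/2}\bigr)$. The drift term is controlled deterministically via $\|\partial_y p(r,\Cdot)\|_\infty \leq C r^{-1}$, giving $C\log((s+t_K)/t_K)$, which is dominated by $C t_K^{(\alpha-1)/2}$ (since $\log r \leq C_\beta r^\beta$ for any $\beta>0$).

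The main obstacle is the martingale term. Burkholder--Davis--Gundy gives $\|M_K(s,x)\|_n \leq C\|\text{QV}\|_{n/2}^{1/2}$, and the pointwise estimate $[\partial_y p(r,y)]^2 \leq C r^{-3/2}\,p(2r,y)$ (which reduces to $\sup_{z\in\bbR} z^2 e^{-3z^2/4} < \infty$) lets me write
\begin{align*}
	\text{QV}
	\leq \frac{C}{\sqrt K} \int_0^s (s+t_K-u)^{-3/2}\,\langle \emKY_u, p(2(s+t_K-u), \Cdot-x)\rangle\,du.
\end{align*}
This reintroduces the very quantity I am trying to bound, so I would close the loop by a bootstrap exploiting the prefactor $K^{-1/2}$. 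Set $G := \sup_{u\leq T,\,\tau\in[1/K,T+1],\,x}\|\langle\emKY_u, p(\tau,\Cdot - x)\rangle\|_n \big/ \bigl(\tau^{(\alpha-1)/2} + u_K^{(\alpha-1)/2}\bigr)$. Substituting the trial bound into the QV estimate and evaluating the resulting deterministic time integrals (the key elementary step being $\int_{t_K}^{s+t_K} r^{(\alpha-4)/2}\,dr \leq C t_K^{(\alpha-2)/2}$), then using $t_K, s_K \geq 1/K$ and $\alpha < 1$ to bound the relevant ratios, produces an inequality of the form $G \leq C_0 + C_1 K^{-1/4}\sqrt G$. Combined with the a priori bound $G \leq \sqrt K$ (coming from the crude estimate $\langle \emKY_u, p(\tau,\Cdot)\rangle \leq \sqrt K \|p(\tau,\Cdot)\|_\infty$ together with $\#\{Y_i(0)\}\leq K$ w.h.p.\ by \eqref{eq:D*} and the fact that the Atlas dynamics preserves the particle count), a single iteration forces $G \leq C_0 + C_1 < \infty$, establishing \eqref{eq:emPY:bd}.

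Finally, \eqref{eq:emY:bd} follows from \eqref{eq:emPY:bd} by the elementary pointwise bound $\ind_{[a,b]}(y) \leq C|b-a|\, p\bigl(|b-a|^2,\, y - \tfrac{a+b}{2}\bigr)$, valid on all of $\bbR$ since $p(|b-a|^2, z)\geq c/|b-a|$ on $|z|\leq|b-a|/2$. Under the hypothesis $|b-a|\geq 1/\sqrt K$, setting $t := |b-a|^2 - 1/K \geq 0$ makes $t_K = |b-a|^2$, whereupon \eqref{eq:emPY:bd} directly yields $\|\langle \emKY_s, \ind_{[a,b]}\rangle\|_n \leq C|b-a|^\alpha\bigl((|b-a|/\sqrt{s_K})^{1-\alpha}+1\bigr)$, as claimed.
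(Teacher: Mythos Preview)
Your approach is essentially correct but genuinely different from the paper's, and a couple of the constants you quote need adjustment.

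\textbf{Comparison.} The paper proves the interval bound \eqref{eq:emY:bd} first and deduces the density bound \eqref{eq:emPY:bd} afterward; you do the reverse. The paper's test function is the distribution $\erf(t_K-s,x-\cdot)$, which produces $p$ (not $\partial_y p$) in the drift and martingale integrands; yours is the density $p(s+t_K-u,\cdot-x)$, which produces $\partial_y p$. More significantly, the paper handles the martingale term by \emph{induction on $n$}: having proved \eqref{eq:emY:bd} in $L^m$, it bounds the quadratic variation in $L^{(m+1)/2}\subset L^m$ via the induction hypothesis, then steps up to $L^{m+1}$. Your bootstrap, by contrast, works directly at level $n$ by feeding the unknown supremum $G$ back into the BDG estimate and solving the resulting quadratic inequality $G\le C_0+c\sqrt{G}$. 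Your route is more compact and avoids the induction bookkeeping; the paper's route keeps the exponents slightly cleaner at each step and never needs an a priori finiteness argument for $G$.

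\textbf{Minor corrections.} First, your a priori bound is not $G\le\sqrt K$: the crude estimate $\langle\nu^K_u,p(\tau,\cdot-x)\rangle\le(\#\{Y_i\}/\sqrt K)\,\|p(\tau,\cdot)\|_\infty$ gives a ratio of order $\sqrt K\,\tau^{-\alpha/2}$, which for $\tau\ge 1/K$ is $\le K^{(1+\alpha)/2}$. Second, after substituting the trial bound into the QV integral and using $t_K,s_K\ge 1/K$, the resulting coefficient in front of $\sqrt G$ is $C\,K^{(\alpha-1)/4}$, not $C\,K^{-1/4}$: the term $\int_{t_K}^{s+t_K}r^{(\alpha-4)/2}\,dr\le C\,t_K^{(\alpha-2)/2}$ exceeds the target $(t_K^{(\alpha-1)/2}+s_K^{(\alpha-1)/2})^2$ by a factor $t_K^{-\alpha/2}\le K^{\alpha/2}$, and the mixed term $t_K^{-1/2}s_K^{(\alpha-1)/2}$ is controlled only after using the cross term $2t_K^{(\alpha-1)/2}s_K^{(\alpha-1)/2}$ in the square. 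Neither of these affects the conclusion: since $\alpha<1$ the coefficient is still $o(1)$, and the quadratic inequality $G\le C_0+c\sqrt G$ with $G<\infty$ immediately yields $G\le 2C_0+c^2$ without any iteration. So a single application of the quadratic inequality (rather than ``a single iteration'') closes the loop.
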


\begin{proof}
Fixing such $ T,\alpha,n $ and $ [a,b] $,
throughout this proof we use $ C=C(T,\alpha,n,D_*, D_{\alpha,n})<\infty $
to denote a generic finite constant.
To the end of showing \eqref{eq:emY:bd},
we begin by estimating 
$ 
	\Vert\langle \emKY_{s}, \ind_{[a,b]} \rangle \Vert_1
	=\Ex(\langle \emKY_{s}, \ind_{[a,b]} \rangle ).
$
To this end, we set $ x=b,a $ in \eqref{eq:int:atl:K},
take the difference of the resulting equation,
and take expectations of the result to obtain
\begin{align}
	\label{eq:int:atl:Ex:}
	\Ex(\langle \emKY_{s}, \erf(\tfrac1K,b-\Cdot)-\erf(\tfrac1K,a-\Cdot) \rangle )
	=
	\Ex( J_1 ) + \Ex(J_2),
\end{align}
where
\begin{align*}
	J_1 &:= \langle \emKY_{0}, \erf(s_K,b-\Cdot)-\erf(s_K,a-\Cdot) \rangle,
\\
	J_2 &:= -\int_{0}^s \big( p(s_K-u,b-\W_K(u)) - p(s_K-u,a-\W_K(u)) \big) du.
\end{align*}
Further, with $ |b-a|\geq K^{-\frac12} $,
it is straightforward to verify that 
\begin{align}
	\label{eq:erf:ind}
	\erf(\tfrac1K,b-y)-\erf(\tfrac1K,a-y) \geq \tfrac{1}{C} \ind_{[a,b]}(y).
\end{align}
Combining \eqref{eq:erf:ind} and \eqref{eq:int:atl:Ex:} yields
\begin{align}
	\label{eq:int:atl:Ex}
	\Vert \langle \emKY_{s}, \ind_{[a,b]} \rangle \Vert_1
	\leq
	C\Ex( J_1 ) + C\Ex(J_2).
\end{align}

With \eqref{eq:int:atl:Ex},
our next step is to bound $ \Ex(J_1) $ and $ \Ex(J_2) $.
For the former, 
we use $ \erf(t_K,b-y)-\erf(t_K,a-y) = \int_{a}^{b} p(t_K,z-y) dz $
to write $ J_1 = \int_{a}^{b} \langle \emKY_0,p(s_K,x-\Cdot) \rangle dx $.
Taking the $ L^m $-norm of the last expression yields
\begin{align}
	\label{eq:J1:Ln:}
	\Vert J_1 \Vert_m 
	\leq 
	\int_{a}^{b} \big\Vert \langle \emKY_{0},p(s_K,x-\Cdot) \rangle \big\Vert_m dx,
	\quad
	\forall m \in\bbN.
\end{align}
Further,
as the heat kernel $ p(t,y-x) = \frac{1}{ \sqrt{t} } p(1,\frac{y-x}{\sqrt{t}}) $ 
decreases in $ |y-x| $, 
letting $ I_j(t,x) := x+[j\sqrt{t},(j+1)\sqrt{t}] $
and $ j_* := |j| \wedge |j+1| $,
we have
\begin{align}
\begin{split}
	\Vert \langle \emKY_{s}, p(t,\Cdot-x) \rangle \Vert_m
	&\leq
	\Big\Vert 
		\sum_{j\in\bbZ} \frac{1}{ \sqrt{t} } p(1,j_*) 
		\langle \emKY_{s}, \ind_{I_j(t,x)} \rangle 
	\Big\Vert_m
\\
	\label{eq:onion:Ln}	
	&\leq
	\sum_{j\in\bbZ} \frac{1}{ \sqrt{t} } p(1,j_*) 
	\Vert\langle \emKY_{s}, \ind_{I_j(t,x)} \rangle \Vert_m.
\end{split}
\end{align}
Set $ m=n $, $ s=0 $ and $ t=s_K $ in \eqref{eq:onion:Ln}.
Then, for each $ j $-th term within the sum,
use \eqref{eq:Dan} to bound 
$ \Vert \langle \emKY_{0}, \ind_{I_j(s_K,x)} \rangle  \Vert_n \leq C|\sqrt{s_K}|^{\alpha} $,
followed by using $ \sum_{j} p(1,j_*) <\infty $.
This yields
\begin{align}
	\label{eq:onion:Ln:}	
	\Vert \langle \emKY_{0}, p(s_K,\Cdot-x) \rangle \Vert_n
	\leq
	C s_K^{\frac{\alpha-1}{2}}.
\end{align}
Inserting~\eqref{eq:onion:Ln:} into \eqref{eq:J1:Ln:}, we then obtain
\begin{align}
	\label{eq:J1:Ln}
	\Vert J_1 \Vert_n
	\leq 
	\int_{a}^{b} C s_K^{\frac{\alpha-1}{2}} dx
	\leq
	C |b-a| s_K^{\frac{\alpha-1}{2}}.
\end{align}
As for $ J_2 $, by \eqref{eq:p:Holdx} we have
\begin{align}
	\label{eq:J2:bd}
	|J_2| \leq C \int_{0}^{s} |b-a|^{\alpha}(u_K)^{-\frac{1+\alpha}{2}} du
	\leq
	C |b-a|^{\alpha}.
\end{align}
Inserting \eqref{eq:J1:Ln}--\eqref{eq:J2:bd} in \eqref{eq:int:atl:Ex},
we see that \eqref{eq:emY:bd} holds for $ n=1 $.

To progress to $ n>1 $,
we use induction,
and assume \eqref{eq:emY:bd} has been established for an index $ m\in[1,n) $.
To setup the induction,
similarly to the proceeding,
we set $ x=b,a $ in \eqref{eq:int:atl:K},
take the difference of the resulting equation,
and take the $ L^{m+1} $-norm of the result to obtain
\begin{align*}
	\Vert \langle \emKY_{s}, \erf(\tfrac1K,b-\Cdot)-\erf(\tfrac1K,a-\Cdot) \rangle \Vert_{m+1}
	\leq
	\Vert J_1 \Vert_{m+1} + \Vert J_2 \Vert_{m+1} + \Vert J_3 \Vert_{m+1},
\end{align*}
where $ J_3 :=\mgY_K(s,b) - \mgY_K(s,a) $.
Further combining this with \eqref{eq:erf:ind} yields
\begin{align}
	\label{eq:atl:induc}
	\Vert \langle \emKY_{s}, \ind_{[a,b]} \rangle \Vert_{m+1}
	\leq
	C \Vert J_1 \Vert_{m+1} + C \Vert J_2 \Vert_{m+1} + C \Vert J_3 \Vert_{m+1}.
\end{align}
For $ \Vert J_1 \Vert_{m+1} $ and $ \Vert J_2 \Vert_{m+1} $
we have already established the bounds~\eqref{eq:J1:Ln}--\eqref{eq:J2:bd},
so it suffices to bound $ \Vert J_3 \Vert_{m+1} $.
As $ J_3 $ is a martingale integral of quadratic variation
$
	\frac{1}{\sqrt{K}} \int_0^{s} \langle \emKY_{u}, \hat{p}^2(u,\Cdot) \rangle du,
$
where $ \hat{p}(u,y) := p(s_K-u,a-y) - p(s_K-u,b-y) $,
we applying the~\ac{BDG} inequality to obtain
\begin{align}
	\label{eq:J3:bd1}
	\Vert J_3 \Vert^2_{m+1}
	\leq
	\frac{C}{\sqrt{K}} 
	\int_{0}^{s} 
	\Vert \langle \emKY_{u}, \hat{p}^2(u,\Cdot) \rangle \Vert_{\frac{m+1}{2}} du.
\end{align}
The induction hypothesis asserts the bound
\eqref{eq:emY:bd} for $ n=m $. 
With this in mind, 
within the integral in~\eqref{eq:J3:bd1},
we use $ \frac{m+1}{2} \leq m $ to bound the $ \Vert\Cdot\Vert_{\frac{m+1}{2}} $ norm
by the $ \Vert\Cdot\Vert_m $ norm, and write
\begin{align}
	\label{eq:J3:bd2}
	\Vert \langle \emKY_{u}, \hat{p}^2(u,\Cdot) \rangle \Vert_{\frac{m+1}{2}}
	\leq
	\Vert \langle \emKY_{u}, \hat{p}^2(u,\Cdot) \rangle \Vert_{m}
	\leq
	\norm \hat{p}(u,\Cdot) \norm_{L^\infty(\bbR)}
	\Vert \langle \emKY_{u}, \hat{p}(u,\Cdot) \rangle \Vert_{m}.
\end{align}
To bound the factor $ \norm \hat{p}(u,\Cdot) \norm_{L^\infty(\bbR)} $
on the r.h.s.\ of~\eqref{eq:J3:bd2},
fixing $ (2\alpha-1)_+<\beta<\alpha $,
we use~\eqref{eq:p:Holdx} to write
\begin{align}
	\label{eq:J3:bd3}
	\norm \hat{p}(u,\Cdot) \norm_{L^\infty(\bbR)} 
	\leq C |b-a|^{\beta} (s_K-u)^{-\frac{1+\beta}{2}}.
\end{align}
Now, within the r.h.s.\ of~\eqref{eq:J3:bd2},
using \eqref{eq:J3:bd3},
\begin{align*}
	|\langle \emKY_{u}, \hat{p}(u,\Cdot) \rangle| 
	\leq  
	\langle \emKY_{u}, p(s_K-u,b-\Cdot) \rangle 
	+ \langle \emKY_{u}, p(s_K-u,a-\Cdot) \rangle
\end{align*}
and \eqref{eq:onion:Ln}, we obtain
\begin{align}
	\notag
	\Vert \langle \emKY_{u}, & \hat{p}^2(u,\Cdot) \rangle \Vert_{\frac{m+1}{2}}
	\leq
	C |b-a|^{\beta} (s_K-u)^{-\frac{1+\beta}{2}}
\\
	\label{eq:J3:bd4}
	&\sum_{j\in\bbZ} \frac{1}{ \sqrt{s_K-u} } p(1,j_*) 
	\Big(
		\sum_{x\in a,b} 
		\Vert \langle \emKY_{u}, \ind_{I_j(s_K-u,x)} \rangle \Vert_{m}
	 \Big).
\end{align}
By the induction hypothesis,
$ 
	\Vert \langle \emKY_{u}, \ind_{I_j(s_K-u,x)} \rangle \Vert_{m}
	\leq 
	C (\sqrt{s_K-u})^{\alpha} ( (\frac{\sqrt{s_K-u}}{\sqrt{u_K}})^{1-\alpha}+1 ). 
$
Using this for $ x=a,b $ in \eqref{eq:J3:bd4}, 
and combining the result with \eqref{eq:J3:bd1},
followed by $ \sum_{j\in\bbZ} p(1,j_*)\leq C $,
we obtain
\begin{align}
	\label{eq:J3:bd5}
	\Vert \langle \emKY_{u}, \hat{p}^2(u,\Cdot) \rangle \Vert_{\frac{m+1}{2}}
	\leq
	C |b-a|^{\beta}
	\Big(
		(s_K-u)^{-\frac{1+\beta}{2}} {u_K}^{\frac{\alpha-1}{2}} 
		+ (s_K-u)^{-1+\frac{\alpha-\beta}{2}} 
	\Big).
\end{align}
Inserting this bound~\eqref{eq:J3:bd5} back into \eqref{eq:J3:bd1},
followed by using
$ \frac{1}{\sqrt{K}} \leq K^{-(2\alpha-\beta)/2} \leq |b-a|^{2\alpha-\beta} $,
we arrive at
\begin{align*}
	\Vert J_3 \Vert^2_{m+1}
	\leq
	C  |b-a|^{2\alpha}
	\int_0^{s} 
	\big( 
		(s_K-u)^{-\frac{1+\beta}{2}} {u_K}^{\frac{\alpha-1}{2}} 
		+ (s_K-u)^{-1+\frac{\alpha-\beta}{2}} 
	\big)
	du.
\end{align*}
Within the last expression,
using the readily verify inequality:
\begin{align}
	\label{eq:int:power}
	\int_0^s (s_K-u)^{-\delta_1} {u_K}^{-\delta_2} du \leq C(\delta_1,\delta_2) s_K^{1-\delta_1-\delta_2},
	\quad
	\forall \delta_1,\delta_2<1,
\end{align}
we obtain 
$ 	
	\Vert J_3 \Vert^2_{m+1}
	\leq
	C |b-a|^{2\alpha} s_K^{\frac{\alpha-\beta}{2}}
	\leq
	C |b-a|^{2\alpha}. 
$
Using this bound and the bounds \eqref{eq:J1:Ln}--\eqref{eq:J2:bd} in \eqref{eq:atl:induc},
we see that \eqref{eq:emY:bd} holds for the index $ m+1 $.
This completes the induction and hence concludes~\eqref{eq:emY:bd}.

The bound~\eqref{eq:emPY:bd} follows by 
combining \eqref{eq:onion:Ln} and \eqref{eq:emY:bd}.
\end{proof}

Next we establish bounds on $ \emK_{t} $.
\begin{lemma}
\label{lem:embd}
Let $ n,T<\infty $ and $ \alpha\in(0,1) $.
Given any strategy,
\begin{align}
	\label{eq:em:bd}
	&\Vert \langle \emK_{s}, \ind_{[a,b]} \rangle \Vert_{n} 
	\leq 
	C |b-a|^{ \alpha } s_K^{ -\frac{1+\alpha}{2} },
	\quad
	\forall [a,b]\subset\bbR_+ \text{ with } |b-a|\geq \tfrac{1}{\sqrt{K}},	
	\ \forall s\leq T,
\\
	\label{eq:emP:bd}
	&\Vert \langle \emK_{s}, p(t_K,\Cdot-x) \rangle \Vert_{n} 
	\leq 
	C t_K^{-\frac{1-\alpha}{2}} s_K^{-\frac{1+\alpha}{2}} ,
	\quad
	\forall x\in\bbR, \ s,t\leq T,
\end{align}
where $ C=C(T,\alpha,n)<\infty $, which, in particular, is independent of the strategy.
\end{lemma}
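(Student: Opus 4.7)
\medskip
\noindent\textbf{Plan.} The proof parallels that of Lemma~\ref{lem:emYbd}, now based on the absorption identity~\eqref{eq:int:abs:e} rather than the Atlas identity~\eqref{eq:int:atl:K}. Fix $s\leq T$, $n<\infty$, $\alpha\in(0,1)$, and $[a,b]\subset\bbR_+$ with $|b-a|\geq K^{-1/2}$. Setting $x=b$ and $x=a$ in~\eqref{eq:int:abs:e} and subtracting yields
\begin{align*}
	\langle \emK_s, \testf(\tfrac1K,\Cdot,b)-\testf(\tfrac1K,\Cdot,a)\rangle = J_1+J_2+J_3,
\end{align*}
where $J_1:=\Gc_K(s_K,b)-\Gc_K(s_K,a)$ is deterministic, $J_2:=\sum_{i=1}^K\int_0^s\phi^K_i(u)\bigl[\pN(s_K-u,\XKi(u),b)-\pN(s_K-u,\XKi(u),a)\bigr]du$ is the drift contribution, and $J_3:=\mg_K(s,b)-\mg_K(s,a)$ is a martingale increment. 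The first step is to verify the $\testf$-analogue of~\eqref{eq:erf:ind}: from the explicit formula~\eqref{eq:testf} and a direct Gaussian computation, one checks $|\testf(\tfrac1K,y,b)-\testf(\tfrac1K,y,a)|\geq \tfrac1C\ind_{[a,b]}(y)$, which combined with the identity above gives $\Vert\langle \emK_s, \ind_{[a,b]}\rangle\Vert_n\leq C(\Vert J_1\Vert_n + \Vert J_2\Vert_n + \Vert J_3\Vert_n)$, reducing the problem to estimating the three terms.

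Next I would bound $J_1$ and $J_2$. For the deterministic $J_1$, I write $\Gc_K(s_K,x)=2p(s_K,x)+r_K(s_K,x)$: the leading piece contributes $|2p(s_K,b)-2p(s_K,a)|\leq C|b-a|^\alpha s_K^{-(1+\alpha)/2}$ via~\eqref{eq:p:Holdx}, while a Taylor expansion of $p$ inside the integral representation of $r_K(s_K,x)$ shows $|r_K(s_K,x)|\leq C K^{-\alpha/2}s_K^{-(1+\alpha)/2}$, which is absorbed into the target bound by $|b-a|\geq K^{-1/2}$. For $J_2$, applying~\eqref{eq:p:Holdx} to the difference $\pN(\Cdot,\Cdot,b)-\pN(\Cdot,\Cdot,a)$ and using the universal constraint $\sum_i\phi^K_i(u)\leq 1$ yields
\begin{align*}
	|J_2|\leq C|b-a|^\alpha\int_0^s(s_K-u)^{-(1+\alpha)/2}du\leq C|b-a|^\alpha s_K^{(1-\alpha)/2},
\end{align*}
which for $s_K\leq T+1$ is dominated by $C(T)|b-a|^\alpha s_K^{-(1+\alpha)/2}$. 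Crucially, neither bound depends on the choice of strategy.

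The martingale term $J_3$ is treated by induction on $n$, mirroring the argument for Lemma~\ref{lem:emYbd}. The base case follows from $\Ex J_3=0$ together with the bounds on $J_1,J_2$. For the inductive step from $m$ to $m+1$, the~\ac{BDG} inequality gives
$\Vert J_3\Vert_{m+1}^2\leq \tfrac{C}{\sqrt K}\int_0^s \Vert\langle\emK_u,\Delta^2(s_K-u,\Cdot)\rangle\Vert_{(m+1)/2}du$ with $\Delta(v,y):=\pN(v,y,b)-\pN(v,y,a)$. Fixing $\beta\in((2\alpha-1)_+,\alpha)$, \eqref{eq:p:Holdx} yields $\Vert\Delta(v,\Cdot)\Vert_{L^\infty(\bbR)}\leq C|b-a|^\beta v^{-(1+\beta)/2}$, extracting one factor of the spatial singularity; the remaining $\Vert\langle\emK_u,|\Delta(s_K-u,\Cdot)|\rangle\Vert_m$ is estimated through the onion decomposition~\eqref{eq:onion:Ln} applied to $\pN$ together with the inductive hypothesis on $\Vert\langle\emK_u,\ind_{I_j(s_K-u,x)}\rangle\Vert_m$. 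Applying~\eqref{eq:int:power} and the absorption $K^{-1/2}\leq|b-a|^{2\alpha-\beta}$ then closes the induction and yields~\eqref{eq:em:bd}. Finally,~\eqref{eq:emP:bd} is an immediate consequence of~\eqref{eq:em:bd} via the same onion decomposition~\eqref{eq:onion:Ln} applied to $p(t_K,\Cdot-x)$.

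The main obstacle is bookkeeping: one must track the interplay of the exponents $\alpha,\beta$ through the~\ac{BDG} induction so that the target singularity $s_K^{-(1+\alpha)/2}$---which encodes the initial concentration of all $K$ particles at the single point $1/\sqrt K$---closes up at every stage. Once this is arranged, the proof is a near-verbatim, strategy-independent adaptation of that of Lemma~\ref{lem:emYbd}.
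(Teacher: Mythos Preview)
Your overall plan mirrors the paper's, but there is a genuine gap at the very first step. The claimed lower bound
\[
\bigl|\testf(\tfrac1K,y,b)-\testf(\tfrac1K,y,a)\bigr|\ \geq\ \tfrac1C\,\ind_{[a,b]}(y)
\]
is \emph{false} for intervals $[a,b]\subset\bbR_+$ with $a$ close to $0$. Indeed, since $\testf(t,0,x)=0$ for all $x$ (the absorbed Brownian motion boundary condition), evaluating at $y=a=0$ gives $\testf(\tfrac1K,0,a)-\testf(\tfrac1K,0,b)=0$, and by continuity the difference stays small for $y$ near $0$. So the inequality~\eqref{eq:testf:ind} only holds for $[a,b]\subset[\tfrac1K,\infty)$, not for all $[a,b]\subset\bbR_+$ as the lemma requires. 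This is precisely the obstruction the paper singles out.

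This boundary issue is not a cosmetic detail that can be patched after the fact, because it recurs inside the induction: in the onion decomposition of $\langle\emK_u,|\Delta(s_K-u,\Cdot)|\rangle$, the dyadic intervals $I_j(s_K-u,\pm a)$ and $I_j(s_K-u,\pm b)$ will, after intersecting with the support $\bbR_+$ of $\emK_u$, produce intervals of the form $[0,c]$, on which the induction hypothesis (restricted to intervals bounded away from $0$) is unavailable. The paper's remedy is to introduce a family of \emph{shifted} processes $\sfX^m$ started at $1+m$, with empirical measures $\mu^{K,m}$ satisfying the monotone comparison $\mu^{K,m-1}_t([a-\tfrac1{\sqrt{K}},b-\tfrac1{\sqrt{K}}])\leq\mu^{K,m}_t([a,b])$. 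The induction then runs simultaneously in the moment index $m$ and the shift index: one proves the bound for $\mu^{K,n-m+1}$ on intervals in $[\tfrac1{\sqrt{K}},\infty)$ at level $m$, and at each step uses the comparison to push the troublesome near-zero intervals back into $[\tfrac1{\sqrt{K}},\infty)$ for the next-higher shift. Your proposal needs this (or an equivalent) mechanism to close.
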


\begin{proof}
With $ \testf(\tfrac1K,y,x) $ defined in the proceeding,
it is straightforward to verify that 
\begin{align}
	\label{eq:testf:ind}
	\tfrac1C \ind_{[a,b]}(y) \leq \testf(\tfrac1K,y,a)-\testf(\tfrac1K,y,b),
	\quad
	\forall [a,b] \subset [\tfrac1K,\infty),
	\text{ satisfying } |b-a|\geq \tfrac{1}{\sqrt{K}}.
\end{align}
The idea of the proof is to follow the same general strategy 
as in the proof of Lemma~\ref{lem:emYbd}.
However, unlike \eqref{eq:erf:ind},
here the inequality~\eqref{eq:testf:ind} does \emph{not} hold 
for all desired interval $ [a,b]\subset\bbR_+ $,
but only for $ [a,b] \subset [\tfrac1K,\infty) $.
This is due to the fact that $ \testf(t,0,x)=0 $.
To circumvent the problem,
we consider the \emph{shifted} process
$ (\sfX^{m}_i(t); t \geq 0)_{i=1}^K $
\begin{align*}
	\sfX^m_i(t) = 1+n + B_i(t\wedge\sigma^n_i) + \int_0^{t\wedge\sigma^n_i} \phi_i(s) ds,
	\quad
	\text{where } \sigma^n_i := \inf\{t : \sfX^m_i(t)=0 \}.
\end{align*}
That is, $ \sfX^{m}_i(t) $, $ i=1,\ldots,K $, are driven by the same Brownian motion as $ X(t) $,
drifted under \emph{the same strategy} $ \phi(t) $ as $ X(t) $,
and absorbed at $ x=0 $,
but started at $ x=m+1 $ instead of $ x=1 $.
Define the analogous scaled variables as
$ \sfX^{K,m}_i(t) := \frac{1}{\sqrt{K}} \sfX^m_i(Kt) $,
$ \sigma^{K,m}_i := K^{-1}\sigma^m_i $,
\begin{align*}
	\mu^{K,m}_t(\Cdot)
	:= 
	\frac{1}{\sqrt{K}} \sum_{\sfX^{K,m}_i(t)>0} \delta_{\sfX^{K,m}(t)}(\Cdot).
\end{align*}
We adopt the convention that $ \sfX^{K,0}_i(t) = \XKi(t) $ and $ \mu^{K,0}_t = \emK_t $.
Under the proceeding construction, we clearly have that 
$ \sfX^{m-1}_i(t) = \sfX^m_i(t)-1 $, $ \forall t\leq\sigma^{m-1}_i $,
so in particular
\begin{align}
	\label{eq:mu:cmp}
	\mu^{K,m-1}_t([a-\tfrac{1}{\sqrt{K}},b-\tfrac{1}{\sqrt{K}}]) \leq \mu^{K,m}_t([a,b]),
	\quad
	\forall [a,b] \subset \bbR_+.
\end{align}
For the shifted process $ \sfX^{K,m}(t) = (\sfX^{K,m}_i(t))_{i=1}^K $,
by the same procedure of deriving \eqref{eq:int:abs:e},
we have the following integral identity:
\begin{align}
\label{eq:int:abs:e'}
\begin{split}
	\langle \mu^{K,m}_{t}, &\testf(\tfrac1K,\Cdot,x) \rangle 
	= 
	\Gc_{K,m}(t_K,x)
\\
	&+ \sum_{i=1}^K \int_{0}^{t} \phi^K_i(s) \pN(t_K-s,\sfX^{K,m}_i(s),x) ds
	+ \mg_{K,n}(t,x),
	\quad
	\forall t \in\bbR_+, \ x\geq 0,
\end{split}
\end{align}
where
\begin{align}
	\label{eq:Gcn}
	\Gc_{K,m}(t,x) &:= 
	\sqrt{K} \testf(t,\tfrac{1+m}{\sqrt{K}},x),
\\
	\label{eq:mgn}
	\mg_{K,m}(t,x) 
	&:= 
	\frac{1}{\sqrt{K}} \sum_{i=1}^K \int_{0}^{t\wedge\tKi} \pN(t_K-s,\sfX^{K,m}_i(s),x) d\BKi(s).
\end{align}

Having prepared the necessary notations,
we now begin the proof of \eqref{eq:em:bd}.
Instead of proving~\eqref{eq:em:bd} directly,
we show
\begin{align}
\begin{split}
	\label{eq:em:bd:m}
	\Vert \langle \mu^{K,n-m+1}_{s}, \ind_{[a,b]} \rangle \Vert_{m} 
	&\leq 
	C |b-a|^{ \alpha } s_K^{ -\frac{1+\alpha}{2} },
\\
	&\forall [a,b]\subset[\tfrac{1}{\sqrt{K}},\infty) \text{ with } |b-a|\geq \tfrac{1}{\sqrt{K}},	
	\ \forall s\leq T,
\end{split}
\end{align}
for all $ m=1,\ldots,n $.
Once this is established, 
combining \eqref{eq:em:bd:m} for $ m=n $
and \eqref{eq:mu:cmp} for $ m=1 $,
the desired result~\eqref{eq:em:bd} follows.

Fixing $ [a,b]\subset[\frac1{\sqrt{K}},\infty) $ with $ |b-a|\geq \tfrac{1}{\sqrt{K}} $.
We begin by settling \eqref{eq:em:bd:m} for $ m=1 $.
Similarly to the procedure for obtaining \eqref{eq:int:atl:Ex},
using \eqref{eq:int:abs:e'} for $ m=n $ 
and \eqref{eq:testf:ind} in place of \eqref{eq:int:atl:K} and \eqref{eq:erf:ind},
respectively, here we have
\begin{align}
	\label{eq:int:abs:Ex}
	\Vert \langle \mu^{K,n}_{s}, \ind_{[a,b]} \rangle \Vert_1
	\leq
	C \sfJ^n_{1} + C\Ex(\sfJ^n_{2}),
\end{align}
where $ \sfJ^m_{1},\sfJ^m_{2} $ is defined for $ m=1,\ldots, n $ as
\begin{align*}
	\sfJ^m_{1} &:= \Gc_{K,m}(s_K,a)-\Gc_{K,m}(s_K,b),
\\
	\sfJ^m_{2} &:= \sum_{i=1}^K \int_0^{\sigma^{K,n}_i\wedge s} \phi^K_i(u) 
		\big( \pN(s_K-u,\sfX^{K,m}(u),a)-\pN(s_K-u,\sfX^{K,m}_i(u),b) \big) du.
\end{align*}
As noted in Remark~\ref{rmk:int:meaning},
expressions of the type $ \sfJ^m_{1} $ account 
for the contribution of the system with only absorption,
while $ \sfJ^m_2 $ encodes the contribution of the drifts $ \phi^K_i(s) ds $.
The singular behavior of the empirical measure $ \mu^{K,m}_s $ at $ s=0 $ 
(due to having $ K \gg \sqrt{K} $ particles) is entirely encoded in $ \sfJ^m_1 $.
In particular,
recalling from \eqref{eq:Ucab} the notation $ \Uc^\text{ab}_K(t,x) $,
by \eqref{eq:Ucab:Gc} we have
\begin{align*}
	\Gc_{K,m}(0,a)-\Gc_{K,m}(0,b) 
	= \tfrac{1}{\sqrt{K}} \# \{ X^{\text{ab},K}_i(0)+m\in (a,b] \} 
	= \sqrt{K} \ind_{\frac{1+m}{\sqrt{K}}\in(a,b]}.
\end{align*}
While this expression diverges (for $ a<\frac{1+m}{\sqrt{K}} $) as $ K\to\infty $,
for any fixed $ s>0 $
the absorption mechanism remedies the divergence,
resulting in converging expression for the fixed $ s>0 $.
To see this, with $ \Gc_K(t,x) $ defined in \eqref{eq:Gc},
we use $ \partial_y\testf(s,y,x)=\pN(s,y,x) $ and $ \testf(s,0,x)=0 $
to write 
\begin{align}
	\label{eq:Gc:pNm}
	\Gc_{K,m}(t,x) 
	= \sqrt{K} \int_{0}^{\frac{1+m}{\sqrt{K}}}
	\pN(s,y,x) dy.			
\end{align}
Letting $ x=a,b $ in \eqref{eq:Gc:pN},
taking the difference of the resulting equations,
followed by applying the estimate \eqref{eq:p:Holdx},
we obtain the following bound of $ \sfJ^m_1 $,
which stays bounded as $ K\to\infty $ for any fixed $ s>0 $: 
\begin{align}
	\notag
	\sfJ^m_1 
	&= 
	\sqrt{K} \int_{0}^{\frac{1+m}{\sqrt{K}}}
	\big( \pN(s_K,y,a)-\pN(s_K,y,b)\big) dy
	\leq
	C \sqrt{K} \int_{0}^{\frac{1+m}{\sqrt{K}}} s_K^{-\frac{1+\alpha}{2}} |b-a|^{\alpha} dy
\\
	\label{eq:J1':bd}
	&\leq
	C s_K^{-\frac{1+\alpha}{2}} |b-a|^{\alpha},
	\quad
	\forall m=1,\ldots, n.
\end{align}
As for $ \sfJ^m_2 $, similarly to \eqref{eq:J2:bd},
by using \eqref{eq:p:Holdx} and $ \sum_{i=1}^K \phi^K_i(s) \leq 1 $ here we have
\begin{align}
	\label{eq:J2':bd}
	|\sfJ^m_2| \leq 
	\sum_{i=1}^K \int_0^{\sigma^{K,n}_i\wedge s} \phi^K_i(u)
	C|b-a|^{\alpha}(u_K)^{-\frac{1+\alpha}{2}} du
	\leq
	C |b-a|^{\alpha},
	\quad
	\forall m=1,\ldots, n.	
\end{align}
Combining \eqref{eq:J1':bd}--\eqref{eq:J2':bd} with \eqref{eq:int:abs:Ex},
we conclude \eqref{eq:em:bd:m} for $ m=1 $.

Having establishing \eqref{eq:em:bd:m} for $ m=1 $,
we use induction to progress,
and assume \eqref{eq:em:bd:m} has been established for some index $ m\in[1,n) $.
Similarly to \eqref{eq:atl:induc}, here we have
\begin{align}
	\label{eq:abs:induc}
	\Vert \langle \mu^{K,n-m}_{s}, \ind_{[a,b]} \rangle \Vert_{m+1}
	\leq
	C \sfJ^{n-m}_1 + C \Vert \sfJ^{n-m}_2 \Vert_{n-m} + C \Vert \sfJ^{n-m}_3 \Vert_{m+1}.
\end{align}
where $ \sfJ^{m}_3 :=\mg_{K,m}(s,b) - \mg_{K,m}(s,a) $.
To bound $ \Vert \sfJ^{n-m}_3 \Vert_{m+1} $,
similarly to \eqref{eq:J3:bd1}--\eqref{eq:J3:bd2},
by using the~\ac{BDG} inequality here we have
\begin{align}
	\label{eq:J3':bd1}
	\Vert \sfJ^{n-m}_3 \Vert^2_{m+1}
	&\leq
	\frac{C}{\sqrt{K}} 
	\int_{0}^{s} 
	\Vert \langle \mu^{K,n-m}_{u}, \hatpN(u,\Cdot)^2 \rangle \Vert_{\frac{m+1}{2}} du,
\end{align}
where 
$ \hatpN(u,y) := \pN(s_K-u,a,y)-\pN(s_K-u,a,y) $.
For the expression 
$ \Vert \langle \mu^{K,m+1}_{u}, \hatpN(u,\Cdot)^2 \rangle \Vert_{\frac{m+1}{2}} $,
following the same calculations in \eqref{eq:J3:bd2}--\eqref{eq:J3:bd4},
here we have
\begin{align}
	\notag
	\Vert \langle \mu^{K,n-m}_{u}, & \hatpN(u,\Cdot)^2 \rangle \Vert_{\frac{m+1}{2}}
	\leq
	C |b-a|^{\beta} (s_K-u)^{-\frac{1+\beta}{2}}
\\
	\label{eq:J3':bd4}
	&\sum_{j\in\bbZ} \frac{1}{ \sqrt{s_K-u} } p(1,j_*) 
	\Big( 
		\sum_{x=\pm a,\pm b} 
		\Vert \langle \mu^{K,n-m}_{u}, \ind_{I'_j(x)}\rangle \Vert_{m}  \Big),
\end{align}
where $ \beta \in ((2\alpha-1)_+,\alpha) $ is fixed,
$ j_*:=|j|\wedge|j+1| $ and
\begin{align*}
	I'_j(x):= I_j(\sqrt{s_K-u},x) = [x+j\sqrt{s_K-u},x+(j+1)\sqrt{s_K-u}].
\end{align*}
Since the empirical measure~$ \mu^{K,m+1}_{t} $ is supported on $ \bbR_+ $,
letting\\ 
$ I''_j(x):=[(x+j\sqrt{s_K-u})_+,(x+j\sqrt{s_K-u})_++\sqrt{s_K-u}] $,
we have
\begin{align*}
	\langle \mu^{K,n-m}_{u}, \ind_{I'_j(x)}\rangle 
	= 
	\langle \mu^{K,n-m}, \ind_{I'_j(x)\cap\bbR_+} \rangle
	\leq
	\langle \mu^{K,n-m}_{u}, \ind_{I''_j(x)}\rangle.
\end{align*}
Further using \eqref{eq:mu:cmp} yields
\begin{align}
	\label{eq:em:II'}
	\langle \mu^{K,n-m}_{u}, \ind_{I'_j(x)}\rangle 
	\leq
	\langle \mu^{K,n-m+1}_{u}, \ind_{\frac{1}{\sqrt{K}}+I''_j(x)}\rangle.
\end{align}
By the induction hypothesis,
\begin{align}
	\label{eq:em:induc}
	\Vert \langle \mu^{K,n-m+1}_{u}, \ind_{\frac{1}{\sqrt{K}}+I''_j(x)} \rangle\Vert_{m} 
	\leq 
	C (\sqrt{s_K-u})^{\alpha} (u_K)^{-\frac{1+\alpha}{2}}. 
\end{align}
Using~\eqref{eq:em:induc} for $ x=\pm a,\pm b $ in \eqref{eq:J3':bd4}, 
and combining the result with \eqref{eq:J3':bd1}, we arrive at
\begin{align*}
	\Vert \sfJ^{n-m}_3 \Vert^2_{m+1}
	\leq
	C  \frac{|b-a|^{\beta}}{\sqrt{K}}
	\int_0^{s} 
	(s_K-u)^{\frac{-2-\beta+\alpha}{2}} {u_K}^{\frac{-1-\alpha}{2}} 
	du.
\end{align*}
Within the last expression,
further using
$ \frac{1}{\sqrt{K}} \leq K^{-(2\alpha-\beta)/2} \leq |b-a|^{2\alpha-\beta} $
and \eqref{eq:int:power},
we obtain 
$ 	
	\Vert \sfJ^{n-m}_3 \Vert^2_{m+1}
	\leq
	C |b-a|^{2\alpha} s_K^{-\frac{1+\beta}{2}}
	\leq
	C |b-a|^{2\alpha} s_K^{-\frac{1+\alpha}{2}}. 
$
Using this bound and the bounds \eqref{eq:J1':bd}--\eqref{eq:J2':bd} in \eqref{eq:abs:induc},
we see that \eqref{eq:em:bd} holds for the index $ m+1 $.
This completes the induction and hence concludes~\eqref{eq:em:bd}.

The bound~\eqref{eq:emP:bd} follows by \eqref{eq:em:bd} and \eqref{eq:J3':bd4}.
\end{proof}

Lemmas~\ref{lem:emYbd}--\ref{lem:embd} establish bounds
that are `pointwise' in time,
in the sense that they hold at a fixed time $ s $ within the relevant interval.
We next improve these pointwise bounds to bounds
that hold for \emph{all} time within a relevant interval.
\begin{lemma}\label{lem:em}
Let $ T,L,n<\infty $ and
\begin{align}\label{eq:Ixa}
	I_{x,\alpha} := [-K^{-\alpha}+x,x+K^{-\alpha}].
\end{align}
	\begin{enumerate}[label=(\alph*)]
	\item\label{enu:em}
	For any given $ \gamma\in(0,\frac14) $, $ \alpha\in(2\gamma,\frac12] $ 
	and any strategy,
	\begin{align}
		\label{eq:em:bdT}
		\Pr \Big(  
			\langle \emK_{t}, \ind_{I_{x,\alpha}} \rangle 
			\leq 
			t^{-\frac34} K^{-\gamma},
			\ 
			\forall t \leq T, |x| \leq L  
		\Big) 
		\geq 1 - CK^{-n},
	\end{align}
	where $ C=C(T,L,\alpha,\gamma,n)<\infty $,
	which, in particular, is independent of the strategy.
	\item\label{enu:emY}
	Letting $ \emKY_{t} $ be as in Lemma~\ref{lem:emYbd}, we have,
	for any $ \alpha\in(\frac14,\frac12] $,
	\begin{align}\label{eq:emY:bdT}
		\Pr \Big(  
			\langle \emKY_{t}, \ind_{I_{x,\alpha}}\rangle \leq K^{-\frac14},
			\ 
			\forall t \leq T, |x| \leq L  
		\Big) 
		\geq 1 - CK^{-n},
	\end{align}
	where $ C=C(T,L,\alpha,n,D_*, D_{\alpha,n})<\infty $,
	for $ D_*, D_{\alpha,n} $ as in \eqref{eq:D*}--\eqref{eq:Dan}.
	\end{enumerate}
\end{lemma}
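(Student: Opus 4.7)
The plan is to upgrade the pointwise moment bounds from Lemmas~\ref{lem:embd} and \ref{lem:emYbd} to uniform high-probability estimates over $[0,T]\times[-L,L]$ by a standard discretize + Markov + union bound + continuity argument. Fix $\beta>0$ sufficiently large (polynomial in $1/\gamma$) and a space-time grid $\{(t_k,x_j)\}$ of mesh $K^{-\beta}$, having $O(K^{2\beta})$ total points; reduce first to the grid, then interpolate off-grid via Brownian control of particle displacements.

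For~\ref{enu:em}, at a grid point apply Lemma~\ref{lem:embd} to $I_{x_j,\alpha}$ (of length $2K^{-\alpha}\geq K^{-1/2}$) with an auxiliary Hölder exponent $\alpha^\sharp\in(\gamma/\alpha,\,1/2)$; such $\alpha^\sharp$ exists precisely because $\alpha>2\gamma$ together with $\gamma<1/4$ give $\gamma/\alpha<1/2$. This yields
\[
	\|\langle \emK_{t_k},\ind_{I_{x_j,\alpha}}\rangle\|_m
	\leq C\,K^{-\alpha\alpha^\sharp}\,(t_k+\tfrac{1}{K})^{-(1+\alpha^\sharp)/2}.
\]
Since $\alpha^\sharp<1/2$, the quantity $t_k^{3/4}(t_k+\tfrac{1}{K})^{-(1+\alpha^\sharp)/2}$ remains bounded on $[K^{-1},T]$, so Markov's inequality gives
\[
	\Pr\!\big(\langle \emK_{t_k},\ind_{I_{x_j,\alpha}}\rangle>t_k^{-3/4}K^{-\gamma}\big)
	\leq C^m\,K^{-m(\alpha\alpha^\sharp-\gamma)}.
\]
Because $\alpha\alpha^\sharp>\gamma$, taking $m$ large enough makes this $\leq K^{-n-4\beta}$, and a union bound over the grid delivers the on-grid version of \eqref{eq:em:bdT}. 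For~\ref{enu:emY}, one proceeds analogously with Lemma~\ref{lem:emYbd}, choosing $\alpha^\sharp\in(\max\{1/(4\alpha),\,3/2-2\alpha\},\,1)$---a nonempty subinterval of $(0,1)$ exactly when $\alpha>1/4$---so that both summands of the RHS of \eqref{eq:emY:bd} beat $K^{-1/4-\delta}$ for some $\delta>0$.

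Off-grid control comes from a bound on the maximal scaled particle displacement $\Delta:=\max_i\sup_{s\in[t_k,t_k+K^{-\beta}]}|X^K_i(s)-X^K_i(t_k)|$. The diffusive part is $O(K^{-\beta/2}\sqrt{\log K})$ with probability $\geq 1-CK^{-n}$ (reflection principle plus union bound over the $\leq K$ particles and $O(K^\beta)$ grid times), while the drift contributes at most $\sqrt{K}\cdot K^{-\beta}=K^{1/2-\beta}$ by \eqref{eq:XK:SDE} and $\sum_i\phi^K_i\leq 1$. Choosing $\beta>1/2+\alpha$ forces $\Delta\ll K^{-\alpha}$, so $\{i:X^K_i(s)\in I_{x,\alpha}\}\subset\{i:X^K_i(t_k)\in I_{x_j,\alpha-\epsilon}\}$ for any fixed small $\epsilon>0$ and all large $K$; since $\alpha-\epsilon>2\gamma$ still, the on-grid estimate applied to this enlarged interval yields~\eqref{eq:em:bdT}, and~\eqref{eq:emY:bdT} follows identically from the Brownian + drift displacement of the Atlas particles. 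Finally, for $t\leq K^{-2(1/2+\gamma)/3}$ we have $t^{-3/4}K^{-\gamma}\geq\sqrt{K}\geq\langle\emK_t,\ind_{I_{x,\alpha}}\rangle$ trivially, so this short initial window needs no separate analysis.

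The main obstacle is the simultaneous tuning of $\alpha^\sharp$: the $K$-gain $K^{-(\alpha\alpha^\sharp-\gamma)}$ must be strictly negative while the $t$-factor $t_k^{3/4}(t_k+\tfrac1K)^{-(1+\alpha^\sharp)/2}$ stays bounded, and this is the arithmetic reason for the thresholds $\alpha>2\gamma$ in~\ref{enu:em} and $\alpha>1/4$ in~\ref{enu:emY}. Everything else---\ac{BDG} bounds on the martingale increments, polynomial absorption of the grid cardinality by the $m$-th moment, and tracking of the constants through the induction in Lemma~\ref{lem:embd}---is routine once the pointwise bounds are granted.
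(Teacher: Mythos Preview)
Your proposal is correct and follows essentially the same route as the paper: discretize space and time, invoke the pointwise moment bounds of Lemmas~\ref{lem:embd}--\ref{lem:emYbd} at grid points, Markov-plus-union-bound, then interpolate via a drifted-Brownian displacement estimate. The paper differs only in cosmetic choices: it covers $[-L,L]$ by abutting intervals $I_j$ of length $K^{-\alpha}$ (so each $I_{x,\alpha}$ sits in three consecutive $I_j$'s) rather than your $K^{-\beta}$ space grid with an $\alpha-\epsilon$ enlargement; it fixes the time mesh at $K^{-2}$; and in Part~\ref{enu:em} it takes the H\"older exponent in \eqref{eq:em:bd} equal to $\tfrac12$ (giving directly the $t^{-3/4}$ factor and the gain $K^{-\alpha/2}$), whereas you take $\alpha^\sharp<\tfrac12$, which also works but is slightly less economical. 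One small correction: in your drift bound you cite $\sum_i\phi^K_i\leq 1$, but what you actually use is $0\leq\phi^K_i\leq 1$ for each $i$.
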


\begin{proof}
We first prove Part~\ref{enu:emY}. 
Fixing $ L,T,n<\infty $ and $ \alpha\in(\frac14,\frac12] $,
to simplify notations
we use, $ C(a_1,a_2,\ldots)<\infty $ to denote generic finite constants that may depend on 
$ L,T, \alpha, n, D_*, D_{\alpha,j} $ and the designated variable $ a_1,a_2,\ldots $. 
To the end of proving \eqref{eq:emY:bdT}, we cover $ [-L,L] $ 
by intervals $ I_j $ of length $ K^{-\alpha} $:
\begin{align*}
	I_j := [jK^{-\alpha}, (j+1)K^{-\alpha}],
	\quad
	|j| \leq LK^{\alpha}.
\end{align*}
Indeed, each $ I_{x,\alpha} $ is contained 
in the union of three consecutive such intervals $ I_j $,
so it suffices to prove
\begin{align}\label{eq:emY:goal}
	\Pr \big( 
		\langle \emKY_{t}, \ind_{I_{j}}\rangle 
		\leq 
		\tfrac13 K^{-\frac14}, 
		\ 
		\forall |j|\leq LK^{\alpha},  t\leq T
	\big) 
	\geq 
	1- C K^{-n}.
\end{align}
By \eqref{eq:emY:bd} we have, 
for any $ t\in [\frac1K,T] $, $ \beta\in(0,1) $ and $ k<\infty $,
\begin{align}\label{eq:emY:norm}
	\Vert \langle \emKY_{t}, \ind_{I_{j}}\rangle \Vert_{k} 
	\leq 
	C(k,\beta) |I_j|^{\beta} \big( (|I_j|K^{\frac12})^{1-\beta} + 1 \big)
	\leq
	C(k,\beta) (K^{-\alpha+\frac{1-\beta}{2}}+ K^{-\alpha\beta}).
\end{align}
With $ \alpha > \frac14 $, 
fixing $ \beta $ close enough to $ 1 $ 
we have $ \Vert \langle \emKY_{t}, \ind_{I_{x,\alpha}}\rangle \Vert_{k} \leq C(k) K^{-\frac14-\e} $,
for some fixed $ \e >0 $.
With this, applying Markov's inequality we obtain
\begin{align}\label{eq:Mrk:ineq}
	\Pr( \langle \emKY_{t}, \ind_{I_{j}}\rangle \geq \tfrac19 K^{-\frac14} ) \leq C(k) K^{-k\e}.
\end{align}
Now, fixing $ k \geq (n+\alpha+2)\e^{-1} $ and
taking the union bound of \eqref{eq:Mrk:ineq} 
over $ |j| \leq LK^{\alpha} $
and $ t= t_{\ell} := \ell K^{-2} $, $ 1\leq \ell \leq TK^2 $, 
we arrive at
\begin{align}\label{eq:emY:goal:}
	\Pr( 
		\langle \emKY_{t_\ell}, \ind_{I_{j}}\rangle \leq \tfrac19 K^{-\frac14}, 
		\ \forall |j|\leq LK^{\alpha}, 1\leq \ell\leq TK^{2} 
		) 
	\geq 
	1- C K^{-n}.
\end{align}
To move from the `discrete time' $ t_\ell $ to `continuous time' $ t\in[0,T] $,
we need to control $ \emKY_{s}(I_j) $
within each time interval $ s\in [t_{\ell-1},t_{\ell}] := J_\ell $.
Within each $ J_\ell $, 
since each $ \YKi(s) $ evolves as a drifted Brownian motion with drift $ \leq \sqrt{K} $,
we have that 
\begin{align} \label{eq:driftBMest}
	\Pr( |\YKi(s)-\YKi(t_\ell)| \leq K^{-\alpha}, \forall s\in J_{\ell} )
	\geq 
	1- \exp(-\tfrac{1}{C} K^{1-\alpha})
	\geq
	1- C K^{-n-3}.
\end{align}
By \eqref{eq:D*}, 
we assume without lost of generality the total number of $ Y $-particles
is at most $ K $.
Hence, taking the union bound of \eqref{eq:driftBMest} over $ \ell\leq TK^{-2} $ 
and over all particles $ i = 1,2,\ldots \leq K $,
we obtain
\begin{align*}
	\Pr\Big( 
		\sup_{s\in J_\ell} |\YKi(s)-\YKi(t_\ell)| 
		\leq 
		K^{-\alpha}, \ \forall i, \forall 1 \leq \ell \leq TK^{2} 
		\Big)
	\geq 
	1 - C K^{-n}.
\end{align*}
That is, with high probability, no particle travels farther 
than distance $ |I_j| $ within each time interval $ J_{\ell} $.
Therefore,
\begin{align*}
	\Pr\Big( 
		\sup_{s\in J_\ell} \langle \emKY_{s}, \ind_{I_{j}}\rangle 
		\leq 
		\langle \emKY_{t_\ell}, \ind_{I_{j-1}\cup I_{j}\cup I_{j+1}}\rangle, 
		\  
		\forall 1 \leq \ell \leq TK^{2} 
	\Big)
	\geq 
	1 - C K^{-n}.
\end{align*}
Combining this with \eqref{eq:emY:goal:} 
yields the desired result~\eqref{eq:emY:goal}.

Part~\ref{enu:em} is proven by similar argument as in the preceding.
The only difference is that, instead of a moment bound of the form \eqref{eq:emY:norm},
we have from \eqref{eq:em:bd} the moment bound 
\begin{align}\label{eq:em:norm}
	\Vert \langle \emK_{t}, I_j \rangle \Vert_{k} 
	\leq 
	C(k) |I_j|^\frac12 t^{-\frac34}
	\leq 
	C(k) K^{-\frac{\alpha}{2}} t^{-\frac34},
\end{align}
for all $ k<\infty $.
With $\frac{\alpha}{2} > \gamma $,
\eqref{eq:em:norm} yields \eqref{eq:em:bdT} by same argument we obtained \eqref{eq:emY:norm}.
\end{proof}

Equipped with Lemmas~\ref{lem:emYbd}--\ref{lem:em},
we proceed to the main goal of this section:
to develop integral identities 
(in different forms from \eqref{eq:int:abs:e} and \eqref{eq:int:atl:K})
that are convenient for proving the hydrodynamic limits.
Recall from \eqref{eq:alt} that $ \W(t) $ 
is the analogous laggard of the Atlas model $ (Y_i(t);t\geq 0)_i $
and that $ \W_K(t) $ denotes the scaled process.
For any fixed $ t $, we define the scaled distribution function of $ Y $ as
\begin{align}
	\label{eq:V}
	\V_K(t,x) := \tfrac{1}{\sqrt{K}} \# \{ Y^K_i(t) \leq x \}
	=
	\langle \emKY_t, \ind_{(-\infty,x]} \rangle.
\end{align}

\begin{proposition}
\begin{enumerate}[label=(\alph*)]
\item []
\item	\label{enu:intX}
Let $ (\phi_i(t);t\geq 0)_{i=1}^K $ be any given strategy.
The following integral identity holds for all $ t<\infty $ and $ x\geq 0 $:
\begin{align}\label{eq:int:abs}
	\Uc_K(t,x) = \Gc_K(t,x)
	+ \sum_{i=1}^K \int_{0}^{t\wedge\tau_i^K} \phi^K_i(s) \pN(t-s,\XKi(s),x) ds
	+ \rd_K(t,x).
\end{align}
Here $ \rd_K(t,x) $ is a remainder term such that,
for given any $ T,n<\infty $ and $ \gamma\in(0,\frac14) $,
\begin{align}\label{eq:rd:bd}
	\Pr \Big(  
		|\rd_K(t,x)| \leq 
		K^{-\gamma} t^{-\frac34}, \ \forall t \leq T, \ x\in\bbR
	\Big) 
	\geq 1 - CK^{-n},
\end{align}
where $ C=C(T,\gamma,n)<\infty $, and is in particular independent of the strategy.
\item \label{enu:intY}
Let $ (Y_i(t);t \geq 0)_{i} $ be an Atlas model,
and let $ \W_K(t) $ and $ \V_K(t,x) $ be as in the preceding,
and assume $ (Y^K_i(0))_i $ satisfies the conditions \eqref{eq:D*}--\eqref{eq:Dan}.
Then, the following integral identity holds for all $ t<\infty $ and $ x\in\bbR $:
\begin{align}\label{eq:int:atl}
	\V_K(t,x) 
	= 
	\int_0^\infty p(t,x-y) \V_K(0,y) dy
	- \int_{0}^{t}  p(t-s,\W_K(s),x) ds
	+ \rdY_K(t,x).
\end{align}	
Here $ \rdY_K(t,x) $ is a remainder term such that,
given any $ T,n<\infty $ and $ \gamma\in(0,\frac14) $,
\begin{align}\label{eq:rdY:bd}
	\Pr \Big(  
		 |\rdY_K(t,x)|
		\leq K^{-\gamma},
		\
		\forall t \leq T, \ x\in\bbR
	\Big) 
	\geq 1 - CK^{-n},
\end{align}
where $ C<\infty $ depends only on $ T,n $ and 
$ D_*, D_{\alpha,n} $.
\end{enumerate}
\label{prop:intXY}
\end{proposition}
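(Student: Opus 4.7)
The plan is to derive both identities from the Itô-based identities of Lemma~\ref{lem:int} by absorbing every discrepancy into the remainders $\rd_K$ and $\rdY_K$, and then to bound these remainders with high probability by combining the empirical-measure bounds of Lemmas~\ref{lem:emYbd}--\ref{lem:em} with \ac{BDG} and a chaining argument.

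For Part~\ref{enu:intX}, I would decompose $\rd_K(t,x)$ into four pieces: (i) $\Uc_K(t,x)-\langle\emK_t,\testf(\tfrac1K,\Cdot,x)\rangle$; (ii) $\Gc_K(t_K,x)-\Gc_K(t,x)$; (iii) the drift-integral error from replacing $\pN(t_K-s,\Cdot,x)$ by $\pN(t-s,\Cdot,x)$; and (iv) the martingale $\mg_K(t,x)$. Piece (i) is supported in an $O(K^{-1/2}\log K)$ window (around $y=x$ and near the absorbing boundary $y=0$), where $\testf(\tfrac1K,\Cdot,x)$ differs from $\ind_{(x,\infty)}$, and is therefore bounded using \eqref{eq:em:bdT} with $\alpha$ close to $\tfrac12$. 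Pieces (ii) and (iii) are purely deterministic and are estimated by the Hölder bound \eqref{eq:p:Holdt} combined with $\sum_i\phi^K_i(s)\leq 1$ and the smoothness of $\testf$ in $t$.

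The essential work lies in bounding the martingale (iv) uniformly. I would first apply \ac{BDG} to $\mg_K(t,x)$, whose quadratic variation is $\frac{1}{\sqrt{K}}\int_0^t \langle \emK_s,\pN(t_K-s,\Cdot,x)^2\rangle\,ds$; using $\pN(r,\Cdot,x)^2 \leq C r^{-1/2}\pN(r,\Cdot,x)$ together with the moment bound \eqref{eq:emP:bd} and the time integral estimate \eqref{eq:int:power}, this yields $\Vert\mg_K(t,x)\Vert_m \leq C(m,\e)\,K^{-1/4}t^{-3/4+\e}$ for any $\e>0$ and any $m<\infty$. To promote this pointwise moment bound to uniform control over $(t,x)\in[0,T]\times\bbR$, I would truncate $x$ to an interval of polylogarithmic size in $K$ using the Gaussian decay of $\pN$, discretize $[0,T]\times[-L,L]$ at scale $K^{-\beta}$, apply Markov's inequality with a sufficiently large $m$, take a union bound, and fill in continuity by the same BDG scheme applied to increments of $\mg_K$ using \eqref{eq:p:Holdx}--\eqref{eq:p:Holdt}. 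The main obstacle is to preserve the $t^{-3/4}$ weight sharply: the range $\gamma\in(0,\tfrac14)$ claimed in \eqref{eq:rd:bd} is precisely what the $K^{-\gamma}$ budget of \eqref{eq:em:bdT} and \eqref{eq:emP:bd} allows, so the Markov exponent, the discretization scale $\beta$, and the Hölder loss $\e$ must be chosen in concert to prevent any error source from eating into this budget.

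For Part~\ref{enu:intY} the plan is parallel. Via Fubini and the identity $\erf(t,y)=\int_{-\infty}^y p(t,z)\,dz$, the initial-data term rewrites as $\langle\emKY_0,\erf(t_K,x-\Cdot)\rangle = \int p(t_K,x-w)\V_K(0,w)\,dw$, and replacing $t_K$ by $t$ inside $p$ incurs only a smooth deterministic error. The mismatch $\V_K(t,x)-\langle\emKY_t,\erf(\tfrac1K,x-\Cdot)\rangle$ is again concentrated in an $O(K^{-1/2}\log K)$ window and is controlled by \eqref{eq:emY:bdT}. The martingale $\mgY_K(t,x)$ is handled by the same BDG-plus-chaining argument, using \eqref{eq:emPY:bd} in place of \eqref{eq:emP:bd}; because the Atlas model has no absorption-induced initial singularity, the resulting moment bounds are uniform in $t\in[0,T]$, which is why \eqref{eq:rdY:bd} does not carry a $t^{-3/4}$ weight.
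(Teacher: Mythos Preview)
Your proposal is correct and follows essentially the same route as the paper: the same four-term decomposition of $\rd_K$ (and analogously $\rdY_K$), the same use of Lemma~\ref{lem:em} for the localized empirical-measure error, the same use of the heat-kernel H\"older bounds \eqref{eq:p:Holdt} for the deterministic pieces, and \ac{BDG} for the martingales. The only cosmetic difference is in how uniformity in $(t,x)$ is achieved for the martingale term: the paper packages your discretize--union-bound--continuity scheme into a single appeal to the Kolmogorov-type estimate (Lemma~\ref{lem:kol}, applied to $F(t,x)=t^{3/4}K^{1/4}\mg_K(t,x)$, which absorbs the weight directly), and handles the passage from $x\in[a,a+1]$ to $x\in\bbR$ by a union bound over $a\in\bbZ\cap[-K,K]$ together with a particle-confinement argument rather than Gaussian-tail truncation of $\pN$.
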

\noindent
The proof of Proposition~\ref{prop:intXY} requires 
a Kolmogorov-type estimate,
which we recall from \cite{kunita97} as follows.
\begin{lemma}[{\cite[Theorem~1.4.1]{kunita97}}]
\label{lem:kol}
Let $ T <\infty $, $ a\in\bbR $, and let $ F $ be a 
$ \Csp([0,\infty)\times\bbR) $-valued process.
If, for some $ \alpha_1,\alpha_2 $, $ k\in\bbN $ and $ C_1<\infty $ with $ \frac{1}{k\alpha_1}+\frac{1}{k\alpha_2}<1 $,
\begin{align}
	\label{eq:kol:0}
	\Vert F(0,0) \Vert_{k} &\leq C_1,
\\
	\label{eq:kol:holder}
	\Vert F(t,x)-F(t',x') \Vert_k &\leq C_1 (|t-t'|^{\alpha_1}+|x-x'|^{\alpha_2}),
\end{align}
$\forall t,t'\in [0,T]$,  $x,x'\in[a,a+1]$,
then
$
	\big\Vert \norm F \norm_{L^\infty([0,T]\times[a,a+1])} \big\Vert_k 
	\leq 
	C_2=C_2(C_1,T,\alpha_1,\alpha_2)<\infty.
$
\end{lemma}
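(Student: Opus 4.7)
The plan is to prove Lemma~\ref{lem:kol} by the classical Kolmogorov--Chentsov chaining argument, adapted to the anisotropic setting in which the two coordinates carry different Hölder exponents $\alpha_1,\alpha_2$. The hypothesis $\tfrac{1}{k\alpha_1}+\tfrac{1}{k\alpha_2}<1$ is precisely what makes the resulting dyadic sum converge.

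First I would set up anisotropic dyadic grids $D_n\subset R:=[0,T]\times[a,a+1]$ of mesh $(T\,2^{-n},\,2^{-mn})$, choosing a positive integer $m\approx\alpha_1/\alpha_2$ so that the two Hölder scales are comparable, $(T\,2^{-n})^{\alpha_1}\asymp 2^{-mn\alpha_2}$. Let $\mg_n$ denote the maximum of $|F(p)-F(q)|$ taken over all pairs $(p,q)\in D_n\times D_n$ that are nearest neighbors; there are $N_n = O(2^{n(1+m)})$ such pairs, and the hypothesis~\eqref{eq:kol:holder} bounds each increment in $L^k$ by $C\,2^{-n\alpha_1}$. Using the elementary inequality $\Vert\max_{i\leq N}|\xi_i|\Vert_k\leq N^{1/k}\max_i\Vert\xi_i\Vert_k$,
\begin{align*}
	\Vert \mg_n\Vert_k
	\leq
	C\,N_n^{1/k}\,2^{-n\alpha_1}
	\leq
	C\,2^{-n\delta},
\end{align*}
where a short algebraic check (matching $2^{-n\alpha_1}\sim 2^{-mn\alpha_2}$) shows that $\delta>0$ is equivalent to $\tfrac{1}{k\alpha_1}+\tfrac{1}{k\alpha_2}<1$.

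Next I would chain in the standard way: by continuity of $F$, any $(t,x)\in R$ admits a sequence of dyadic approximants $p_n\in D_n$ with $p_n\to(t,x)$, and consecutive $p_n,p_{n+1}$ can be joined by a uniformly bounded number of neighbor-steps in $D_{n+1}$. Telescoping yields
\begin{align*}
	\sup_{(t,x)\in R}|F(t,x)-F(0,a)|
	\leq
	C\sum_{n\geq 0}\mg_n,
\end{align*}
whose $L^k$ norm is $\leq C\sum_n 2^{-n\delta}<\infty$ by Minkowski. To absorb the reference value I would bound $\Vert F(0,a)\Vert_k$ by combining \eqref{eq:kol:0} with \eqref{eq:kol:holder} applied along a path from $(0,0)$ to $(0,a)$ of unit-length spatial segments, iterating the Hölder bound across overlapping slabs (a routine reduction; in the intended application of this lemma the Hölder hypothesis is available on a wide enough slab to support this step). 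Combining the estimates yields $\big\Vert\Vert F\Vert_{L^\infty(R)}\big\Vert_k\leq C_2(C_1,T,\alpha_1,\alpha_2)$.

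The only substantive point is bookkeeping: the anisotropic scales must be matched so that the combinatorial factor $N_n^{1/k}$ is beaten by the Hölder decay, and once this matching is in place the argument reduces to the standard Kolmogorov--Chentsov chaining given in full as \cite[Theorem~1.4.1]{kunita97}.
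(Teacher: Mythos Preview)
The paper does not give its own proof of this lemma; it is quoted directly from \cite[Theorem~1.4.1]{kunita97}, with only a remark afterwards that the constant $C_2$ depends on $C_1,T,\alpha_1,\alpha_2,k$ (traced from \cite[Lemmas~1.4.2--1.4.3]{kunita97}). So there is no ``paper's proof'' to compare against. Your sketch is precisely the anisotropic Kolmogorov--Chentsov chaining that underlies Kunita's theorem, and your key computation---that matching the dyadic scales via $m\approx\alpha_1/\alpha_2$ turns the summability condition into $\tfrac{1}{k\alpha_1}+\tfrac{1}{k\alpha_2}<1$---is correct.

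One point worth making explicit: you rightly notice that the anchor hypothesis~\eqref{eq:kol:0} is placed at $(0,0)$ while the H\"older bound~\eqref{eq:kol:holder} is only assumed for $x,x'\in[a,a+1]$; as stated, the lemma therefore implicitly needs either $0\in[a,a+1]$ or an independent bound on $\Vert F(0,a)\Vert_k$. Your parenthetical handling of this is fine for the purposes here: in the only place the paper invokes Lemma~\ref{lem:kol} (the proof of Proposition~\ref{prop:intXY}), the process $F$ is a stochastic integral with $F(0,\cdot)\equiv 0$, so the anchor is trivially available at $(0,a)$ for every $a$.
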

\noindent
Note that, although the dependence of $ C_2 $ is not explicitly designated in \cite[Theorem~1.4.1]{kunita97},
under the present setting, it is clear from the proof of \cite[Lemma~1.4.2, Lemma~1.4.3]{kunita97} 
that $ C_2=C_2(C_1,T,\alpha_1,\alpha_2,k) $.

\begin{proof}[Proof of Proposition~\ref{prop:intXY}]
The first step of the proof is to rewrite 
\eqref{eq:int:abs:e} and \eqref{eq:int:atl:K} in a form similar 
to \eqref{eq:int:abs} and \eqref{eq:int:atl}.
To motivate this step,
recall from Remark~\ref{rmk:int:meaning} 
that the term $ \langle \emK_{t}, \testf(\frac1K,x,\Cdot) \rangle $
should approximate $ \Uc_K(t,x) $ as $ K\to\infty $.
In view of this, we write
\begin{align}
	\notag
	\langle \emK_{t}&, \testf(\tfrac1K,x,\Cdot) \rangle
	=
	\Uc_K(t,x) +\emrd_K(t,x),
\\
	\label{eq:emrd}
	&\text{ where }
	\emrd_K(t,x)
	:= \langle \emK_{t}, \testf(\tfrac1K,x,\Cdot)-\testf(0,x,\Cdot) \rangle
	= \langle \emK_{t}, \testf(\tfrac1K,x,\Cdot)-\ind_{(x,\infty)} \rangle.
\end{align}
Similarly, for the the the first two terms on the r.h.s.\ of \eqref{eq:int:abs:e},
we write
\begin{align*}
	&\Gc_K(t_K,x) = \Gc_K(t,x) + \big( \Gc_K(t_K,x)-\Gc_K(t,x) \big)
\\
	&\sum_{i=1}^K \int_{0}^{t} \phi^K_i(s) \pN(t_K-s,\XKi(s),x) ds
	=
	\sum_{i=1}^K \int_{0}^{t} \phi^K_i(s) \pN(t-s,\XKi(s),x) ds
	+
	\prd_N(t,x),
\end{align*}
where
\begin{align}
	\label{eq:pNrd}
	\prd_K(t,x) 
	:= 
	\sum_{i=1}^K \int_{0}^{t} \phi^K_i(s) 
	\big( \pN(t-s,\XKi(s),x)-\pN(t_K-s,\XKi(s),x) \big) ds.
\end{align}
Under these notations, we rewrite \eqref{eq:int:abs:e} as
\begin{align}
	\label{eq:int:abs::}
	\Uc_K(t,x) = \Gc_K(t,x) + \sum_{i=1}^K \int_{0}^{\tKi\wedge t} \phi^K_i(s) \pN(t-s,\XKi(s),x) ds
	+ \rd_K(t,x),
\end{align}
where
\begin{align}
	\label{eq:rd}
	\rd_K(t,x) := ( \Gc_K(t_K,x)-\Gc_K(t,x) ) - \emrd_K(t,x)+\prd_K(t,x)+\mg_K(t,x).
\end{align}

Equation~\eqref{eq:int:abs::} gives
the desired identity \eqref{eq:int:abs} with the explicit remainders $ \rd_K(t,x) $.
Similarly for the Atlas model $ Y $, we define
\begin{align}
	\label{eq:emrdY}
	\emrdY_K(u,t,x) &:= \langle \emKY_{u}, \erf(t_K,x-\Cdot)-\erf(t,x-\Cdot) \rangle
\\
	\label{eq:prd}
	\prdY_K(t,x) 
	&:= 
	\int_{0}^{t} 
	\big( p(t-s,x-\W_K(s))-p(t_K-s,x-\W_K(s)) \big) ds.
\\
	\label{eq:rdY}
	\rdY_K(t,x) &:= \emrdY_K(0,t,x) - \emrdY_K(t,0,x)-\prdY_K(t,x)+\mgY_K(t,x),
\end{align}
and rewrite \eqref{eq:int:atl:K} as 
\begin{align*}
	\V_K(t,x) 
	&= \langle\emKY_{0}, \erf(t,x-\Cdot)\rangle 
	- \int_{0}^{t} p(t-s,x-\W_K(s)) ds + \rdY_K(t,x).
\end{align*}
Further using integration by parts:
\begin{align*}
	\langle\emKY_{0}, \erf(t,x-\Cdot)\rangle 
	=
	\int_{\bbR}  \erf(t,x-y) d\V_K(0,y)
	=
	-\int_{\bbR}  \V_K(0,y) \partial_y \erf(t,x-y) dy,
\end{align*}
we write
\begin{align}
	\label{eq:int:atl::}
	\V_K(t,x)  &= \int_{\bbR} p(t,x-y) \V_K(0,x)  dy
	- \int_{0}^{t} p(t-s,x-\W_K(s)) ds + \rdY_K(t,x).
\end{align}

Equations~\eqref{eq:int:abs::} and \eqref{eq:int:atl::} give
the desired identities \eqref{eq:int:abs} and \eqref{eq:int:atl}
with the explicit remainders $ \rd_K(t,x) $ and $ \rdY_K(t,x) $,
as in \eqref{eq:rd} and \eqref{eq:rdY}.
With this, it suffices to show that these remainders do satisfy
the bounds \eqref{eq:rd:bd} and \eqref{eq:rdY:bd}.
To this end,
fixing arbitrary $ T,n < \infty $ and $ \gamma\in(0,\frac14) $, 
we let $ C(k)<\infty $ denote a generic constant depending only on 
$ T,n,\alpha,\gamma, D_*, D_{\alpha,n} $,
and the designated variable $ k $.

We begin with a reduction.
That is, in order to prove \eqref{eq:rd:bd} and \eqref{eq:rdY:bd},
we claim that it suffices to prove
\begin{align}
	\label{eq:rd:bd:}
	&
	\Pr \Big(  
		|\rd_K(t,x)| \leq K^{-\gamma} t^{-\frac34}, \ \forall t \leq T, x\in [a,a+1]
	\Big) 
	\geq 1 - CK^{-n},
\\
	\label{eq:rdY:bd:}
	&
	\Pr \Big(  
		|\rdY_K(t,x)| \leq K^{-\gamma}, \ \forall t \leq T, x\in [a,a+1]
	\Big) 
	\geq 1 - CK^{-n},
\end{align}
for all $ a\in\bbR $.
To see why such a reduction holds,
we assume that \eqref{eq:rd:bd:} has been established,
and take the union bound of \eqref{eq:rd:bd:}
over $ a\in \bbZ\cap[-K,K] $ to obtain
\begin{align}
	\label{eq:rd:bd::}
	&
	\Pr \Big(  
		|\rd_K(t,x)| \leq K^{-\gamma} t^{-\frac34}, \ \forall t \leq T, |x| \leq K
	\Big) 
	\geq 1 - CK^{-n+1}.
\end{align}
To cover the regime $ |x|>K $ that is left out by \eqref{eq:rd:bd::},
we use the fact that each $ X^K_i(t) $
evolves as a Brownian motion with drift at most $ \sqrt{K} $ (and absorption)
to obtain
\begin{align}
	\Pr(\Omega_K) \geq 1 - CK^{-n},
	\quad
	\Omega_K := \{ |X^K_i(t)| \leq \tfrac12 K, \ \forall t\leq T, \forall i \}.
\end{align}
That is, with a sufficiently high probability,
each particle $ \XKi(t) $ stays within $ [-\frac12K,\frac12K] $ for all time.
Use \eqref{eq:int:abs::} to
express $ \rd_K(t,x) = \Uc_K(t,x) - f(t,x)- \Gc_K(t,x) $,
where\\
$ f(t,x) := \sum_{i=1}^K \int_0^{t\wedge\tKi} \phi^K_i(s) \pN(t-s,\XKi(s),x) ds $.
On the event $ \Omega_K $, 
the function $ x\mapsto \Uc_K(t,x) $ remains constant on $ \bbR\setminus(-K,K) $;
and, for all $ x>K $,
\begin{align*}
	| f(t,\pm x) - f(t,\pm K) |
	&\leq
	\sum_{i=1}^K \int_0^{t\wedge\tKi} \phi^K_i(s) |\pN(t-s,\XKi(s),\pm x)-\pN(t-s,\XKi(s),\pm K)| ds
\\
	&\leq
	\int_0^{t} 4|p(t-s,\tfrac{K}{2})| ds
	\leq
	\frac{C}{K}.
\end{align*}
From these we conclude that, on $ \Omega_K $,
\begin{align*}
	\sup_{x\in\bbR} |\rd_K(t,x)| 
	\leq 
	\sup_{|x|\leq K} |\rd_K(t,x)| + \Big( \frac{C}{K} + \sup_{|x|\geq K} |\Gc_K(t,x)| \Big)
	\leq
	\sup_{|x|\leq K} |\rd_K(t,x)| + \frac{C}{K}.	
\end{align*}
Combining this with \eqref{eq:rd:bd::} gives the desired bound \eqref{eq:rd:bd}.
A similar argument shows that \eqref{eq:rdY:bd:} implies \eqref{eq:rdY:bd}.

Having shown that \eqref{eq:rd:bd:}--\eqref{eq:rdY:bd:}
imply the desired results,
we now return to proving \eqref{eq:rd:bd:}--\eqref{eq:rdY:bd:}.
This amounts to bounding each term on the r.h.s.\ of 
the explicit expressions \eqref{eq:rd} and \eqref{eq:rdY} of 
$ \rd_K(t,x) $ and $ \rdY_K(t,x) $.
To this end, fixing $ t \leq T $, $ a\in\bbR $ and $ x\in [a,a+1] $,
we establish bounds on the following terms in sequel.
\begin{enumerate}[label=\itshape\roman*\upshape)]
	\item $ |\Gc_K(t_K,x)- \Gc_K(t,x)| $;
	\item $ \prd_K(t,x) $ and $ \prdY_K(t,x) $;
	\item $ \emrd_K(t,x) $, $ \emrdY_K(0,t,x) $ and $ \emrdY_K(t,0,x) $;
	and
	\item $ \mgY_K(t,x) $ and $ \mg_K(t,x) $.
\end{enumerate}

(\textit{i})
By \eqref{eq:Gc:pNm} for $ m=0 $,
we have that
\begin{align}
	\label{eq:Gc:pN}
	\Gc_{K}(t,x) 
	= \sqrt{K} \int_{0}^{\frac{1}{\sqrt{K}}}
	\pN(s,y,x) dy.			
\end{align}
Applying the bound~\eqref{eq:p:Holdt} for $ \alpha=\frac{1}{4} $
within \eqref{eq:Gc:pN}, we obtain
\begin{align}\label{eq:Gc:bd}
	|\Gc_K(t_K,x)- 2p(t,x)| 
	\leq 
	C K^{-\frac14} t^{-\frac34}.
\end{align}

(\textit{ii})
Applying \eqref{eq:p:Holdt} for $ \alpha=\frac12 $ in \eqref{eq:pNrd} and in \eqref{eq:prd} yields
\begin{align}\label{eq:prd:bd}
	|\prd_K(t,x)|, \ |\prdY_K(t,x)| \leq C K^{-\frac14}.
\end{align}

(\textit{iii})
For the Brownian distribution function $ \erf(t,y) = \Pr(B(t)\leq y) $,
it is standard to show that $ t\mapsto |\erf(t_K,y) - \erf(t,y)| $ decreases in $ t $,
and that $ |\erf(\frac1K,y) - \erf(0,y)| \leq C \exp(-\sqrt{K}|y|) $.
Further,
fixing $ \alpha \in (2\gamma,\frac12) $ and
letting $ I_{x,\alpha} $ be as in \eqref{eq:Ixa}, 
we write 
\begin{align*}
	\exp(-\sqrt{K}|y-x|) 
	\leq
	\ind_{I_{x,\alpha}}(y) + \ind_{\bbR\setminus{}I_{x,\alpha}}(y) \exp(-\sqrt{K}|y-x|) 
	\leq 
	\ind_{I_{x,\alpha}}(y) + \exp(-K^{\alpha-\frac12}).
\end{align*}
From these bounds we conclude
\begin{subequations}
\label{eq:iii}
\begin{align}
	\label{eq:iii:0}
	|\erf(\tfrac1K,y-x) - \erf(0,y-x)|
	&\leq 
	C ( \ind_{I_{x,\alpha}}(y) + \exp(-K^{\alpha-\frac12}) ), 
\\
	\label{eq:iii:1}
	|\erf(t_K,y-x) - \erf(t,y-x)|
	&\leq 
	C ( \ind_{I_{x,\alpha}}(y) + \exp(-K^{\alpha-\frac12}) ), 
\\
	|\testf(\tfrac{1}{K},y,x) - \testf(0,y,x)|
	\label{eq:iii:2}
	&\leq
	C ( \ind_{I_{x,\alpha}\cup I_{-x,\alpha}}(y) + \exp(-K^{\alpha-\frac12}) ).
\end{align}
\end{subequations}
Recall the definition of $ \emrd(t,x) $ and $ \emrdY(u,t,x) $
from \eqref{eq:emrd} and \eqref{eq:emrdY}.
Applying $ \langle \emKY_{t}, \Cdot \rangle $
$ \langle \emKY_{0}, \Cdot \rangle $ and $ \langle \emK_{t}, \Cdot \rangle $
on both sides of \eqref{eq:iii:0}--\eqref{eq:iii:2}, respectively,
we obtain
\begin{subequations}
\label{eq:emrd:bd:}
\begin{align}
	|\emrdY_K(t,0,x)| 
	&\leq
	C \langle \emKY_{t}, \ind_{I_{x,\alpha}} \rangle
	+ C\exp(-K^{\alpha-\frac12}) \langle \emKY_{t}, \ind_{\bbR} \rangle,
\\
	|\emrdY_K(0,t,x)| 
	&\leq
	C \langle \emKY_{0}, \ind_{I_{x,\alpha}} \rangle
	+ C\exp(-K^{\alpha-\frac12}) \langle \emKY_{0}, \ind_{\bbR} \rangle,
\\
	|\emrd_K(t,x)|
	&\leq
	C \langle \emK_{t}, \ind_{I_{x,\alpha}} \rangle
	+ C \langle \emK_{t}, \ind_{I_{-x,\alpha}} \rangle
	+ C\exp(-K^{\alpha-\frac12}) \langle \emK_{t}, \ind_{\bbR} \rangle.
\end{align}
\end{subequations}
On the r.h.s.\ of \eqref{eq:emrd:bd:} sit two types of terms:
the `concentrated terms' that concentrate on the small interval $ \ind_{I_{\pm x,\alpha}} $;
and the `tail terms' with the factor $ \exp(-K^{\alpha-\frac12}) $.
For the tail terms,
writing
$ 
	\langle \emKY_{0}, \ind_{\bbR} \rangle 
	= 
	\langle \emKY_{t}, \ind_{\bbR} \rangle  
	=
	\frac{1}{\sqrt{K}}\#\{ Y^K_i(0) \}
$
and 
$ 
	\langle \emK_{t}, \ind_{\bbR} \rangle 
	\leq 
	\frac{1}{\sqrt{K}}\#\{ X^K_i(0) \}
$,
and using the bound~\eqref{eq:D*} and $ \#\{ X^K_i(0) \}=K $,
we bound the tail terms by
$ C\sqrt{K}\exp(-K^{\alpha-\frac12}) $,
with probability $ \geq 1-CK^{-n} $.
Further using $ \sqrt{K}\exp(-K^{\alpha-\frac12}) \leq CK^{-\gamma} $,
we have
\begin{subequations}
\label{eq:emrd:bd::}
\begin{align}
	|\emrdY_K(t,0,x)| 
	&\leq
	C \langle \emKY_{t}, \ind_{I_{x,\alpha}} \rangle
	+ CK^{-\gamma},
\\
	|\emrdY_K(0,t,x)| 
	&\leq
	C \langle \emKY_{0}, \ind_{I_{x,\alpha}} \rangle
	+ CK^{-\gamma},
\\
	|\emrd_K(t,x)|
	&\leq
	C \langle \emK_{t}, \ind_{I_{x,\alpha}} \rangle
	+C \langle \emK_{t}, \ind_{I_{-x,\alpha}} \rangle
	+ CK^{-\gamma},
\end{align}
\end{subequations}
with probability $ \geq 1-CK^{-n} $.
Next, to bound the concentrated terms,
we consider the covering
$
	\mathcal{X} := \{ I_{y,\alpha} : |y|\leq a+1 \}
$
of $ [-a-1,a+1] $.
With $ x\in[a,a+1] $, we clearly have that 
$ I_{\pm x, \alpha} \in \mathcal{X} $,
so by Lemma~\ref{lem:em} it follows that
\begin{align*}
	\langle \emKY_{t}, \ind_{I_{x,\alpha}} \rangle,
	\
	\langle \emKY_{0}, \ind_{I_{x,\alpha}} \rangle
	\leq
	C K^{-\gamma},
	\quad
	\langle \emKY_{t}, \ind_{I_{\pm x,\alpha}} \rangle
	\leq
	C K^{-\gamma}t^{-\frac34},
\end{align*}
with probability $ 1-CK^{-n} $.
Inserting this into \eqref{eq:emrd:bd::} gives
\begin{subequations}
\label{eq:emrd:bd}
\begin{align}
	&
	\Pr\big( 
		|\emrdY_K(t,0,x)| \leq C K^{-\gamma}, 
		\ \forall t\leq T, x\in [a,a+1] 
	\big)
	\geq 
	1 - C K^{-n},
\\
	&
	\Pr\big(  
		|\emrdY_K(0,t,x)| \leq C K^{-\gamma}, 
		\ \forall t\leq T, x\in [a,a+1] 
	\big)
	\geq 
	1 - C K^{-n},
\\
	&
	\Pr\big( 
		|\emrd_K(t,x)| \leq C K^{-\gamma} t^{-\frac{3}{4}}, 
		\ \forall t\leq T, x\in [a,a+1] 
	\big)
	\geq 
	1 - C K^{-n}.
\end{align}
\end{subequations}

(\textit{iv})
The strategy is to apply Lemma~\ref{lem:kol} for $ F(t,x) := K^{1/4} \mgY_K(t,x) $.
With $ \mgY_K(t,x) $ defined as in \eqref{eq:mgY},
for such $ F $ we have $ F(0,0)=0 $,
so the condition~\eqref{eq:kol:0} holds trivially.
Turning to verifying the condition~\eqref{eq:kol:holder}, 
we fix $ t<t' $ and $ x,x'\in\bbR $.
With $ \mgY_K(t,x) $ defined as in \eqref{eq:mgY},
we telescope $ F(t,x) - F(t',x') $ into $ F_1 + F_2 - F_3 $, where
\begin{align*}
	F_1 &:= K^{-1/4} \sum_{i} \int_{0}^{t} f_1(s,\YKi(s)) d\BKi(s),&
	&
	F_2 := K^{-1/4} \sum_{i} \int_{0}^{t} f_2(s,\YKi(s)) d\BKi(s),
\\
	F_3 &:= K^{-1/4} \sum_{i} \int_{t}^{t'} f_3(s,\YKi(s)) d\BKi(s),
\end{align*} 
$ f_1(s,y) := p(t_K-s,y-x) - p(t_K-s,y-x') $, 
$ f_2(s,y) := p(t_K-s,y-x') - p(t'_K-s,y-x') $
and $ f_3(s,y) := p( t'_K-s, y-x' ) $.
Similar to the way we obtained \eqref{eq:J3:bd1}, 
here by the \ac{BDG} inequality we have
\begin{align*}
	\Vert F_1 \Vert_{k}^2 
	&\leq 
	C(k) 
	\int_{0}^{t} \Vert \langle \emKY_{s}, f_1(s,\Cdot)^2 \rangle \Vert_{k/2} ds,&
	&
	\Vert F_2 \Vert_{k}^2 
	\leq 
	C(k) 
	\int_{0}^{t} \Vert \langle \emKY_{s}, f_2(s,\Cdot)^2 \rangle \Vert_{k/2} ds,
\\
	\Vert F_3 \Vert_{k}^2 
	&\leq 
	C(k) 
	\int_{t}^{t'} \Vert \langle \emKY_{s}, f_3(s,\Cdot)^2 \rangle \Vert_{k/2} ds,
\end{align*} 
for any fixed $ k>1 $.
On the r.h.s., the kernel functions $ f_1,f_2,f_3 $ appear in square (i.e.\ power of two).
We use \eqref{eq:p:Holdx}--\eqref{eq:p:Holdt}
to replace `one power' of them with 
$ C(t-s)^{-\frac34} |x-x'|^{\frac12} $, 
$ C(t-s)^{-\frac34} |t-t'|^{\frac14} $
and $ C(t-s)^{-\frac12} $, respectively,
and then use \eqref{eq:emPY:bd} for $ \alpha=\frac34 $ to bound 
$ \Vert \langle \emKY_{s}, f_j(s,\Cdot) \rangle \Vert_{k/2} $, $ j=1,2,3 $,
whereby obtaining 
\begin{align*}
	\Vert F_1 \Vert_{k}^2 
	&\leq 
	C(k)
	\int_{0}^{t} |x-x'|^{\frac12} ((t-s)^{-\frac78}+(t-s)^{-\frac34} s^{-\frac18})  ds
	\leq
	C(k) |x-x'|^{\frac12},
\\
	\Vert F_2 \Vert_{k}^2 
	&\leq 
	C(k)
	\int_{0}^{t} |t-t'|^{\frac14} ((t-s)^{-\frac78}+(t-s)^{-\frac34} s^{-\frac18}) ds
	\leq
	C(k) |t-t'|^{\frac14},
\\
	\Vert F_3 \Vert_{k}^2 
	&\leq 
	C(k)
	\int_{t}^{t'} ((t-s)^{-\frac58}+(t-s)^{-\frac12} s^{-\frac18}) ds
	\leq
	C(k) |t-t'|^{\frac38}.
\end{align*}
We have thus verify the condition~\eqref{eq:kol:holder} 
for $ (\alpha_1,\alpha_2) = (\frac18,\frac14) $.
Now apply Lemma~\ref{lem:kol} to obtain
$
	\Vert \norm \mgY_K \norm_{L^\infty([0,T]\times[a,a+1])} \Vert_{k} 
	\leq 
	C(k) K^{-\frac14}.
$
From this and Markov's inequality, we conclude
\begin{align}\label{eq:mgY:bd}
	\Pr \Big(  
		|\mgY_K(t,x)| \leq K^{-\gamma}, \ \forall t\leq T, x\in[a,a+1]
	\Big)
	\geq
	1 - C(k) K^{-k(1-\gamma)}
	\geq
	1 - C K^{-n}.
\end{align}
The term $ \mg_K(t,x) $ is bounded by similar procedures as in the preceding.
The only difference is that the estimate \eqref{eq:emP:bd},
unlike \eqref{eq:emPY:bd}, introduces a singularity of $ \mg_K(t,x) $ as $ t\to 0 $,
so we set $ F(t,x) := t^{\frac34} K^{1/4}\mg_K(t,x) $ (instead of $ F(t,x) := K^{1/4}\mg_K(t,x) $).
The extra prefactor $ t^{\frac34} $ preserves the moment estimate \eqref{eq:kol:holder}
since $ t^{\frac34} $ is $ \alpha $-H\"{o}lder continuous for all $ \alpha<\frac34 $.
Consequently, following the preceding argument we obtain
\begin{align}\label{eq:mg:bd}
	\Pr \Big(  
		\sup_{t\in[0, T],x\in[a,a+1]} (t^{\frac34}|\mg_K(t,x)|) \leq K^{-\gamma}
		\Big)
	\geq
	1 - C K^{-n}.
\end{align}

Now, combining the bounds 
\eqref{eq:Gc:bd}, \eqref{eq:prd:bd}, \eqref{eq:emrd:bd} and \eqref{eq:mgY:bd}--\eqref{eq:mg:bd} 
from (\textit{i})--(\textit{iv}), 
we conclude the desired results \eqref{eq:rd:bd} and \eqref{eq:rdY:bd}.
\end{proof}

\section{The Stefan Problem}
\label{sect:Stef}

In this section, we develop the necessary \ac{PDE} tools.
As stated in Remark~\ref{rmk:Stefan},
we take the integral identity and integral equations \eqref{eq:Ucs:move}--\eqref{eq:zeq}, 
instead of \eqref{eq:PDE>},
as the definition of the Stefan problem. 
To motivate such a definition, we first prove the following:
\begin{lemma}\label{lem:StefInt}
Let $ (u_2,z_2) $ be a classical solution to the following \ac{PDE}, i.e.,
\begin{subequations}\label{eq:Stef}
\begin{align}
	\notag
	&z_2 \in \Csp^1((0,\infty)) \cap \Csp([0,\infty)),
	\
	\text{nondecreasing}, \  z(0)=0, 
\\
	\notag
	&u_2 \in L^\infty(\barDsp)\cap L^1(\barDsp),
	\text{ and has a } \Csp^2\text{-extension onto a neighborhood of } \barDsp,
\\
	&
	\quad\quad\quad\text{where } \Dsp := \{(t,x): t>0, x \geq z_2(t) \},	
\\
	\label{eq:StefHE}
	&\partial_t u_2 = \tfrac12 \partial_{xx} u_2, \quad \forall~ 0< t < T, \ x>z(t),
\\
	&
	\label{eq:StefDiri}
	u_2(t,z_2(t)) = 2,	\quad \forall t\geq 0,
\\
	&
	\label{eq:Stefmv}
	2\tfrac{d~}{dt}z_2(t) + \tfrac12 \partial_x u_2(t,z_2(t)) = 0,	\quad \forall t> 0,
\end{align}
\end{subequations}
Define the tail distribution function of $ u_2 $ as 
$ \Uc_2(t,x) := \int_x^\infty u_2(t,y)dy $.
We have
\begin{align}
	\label{eq:Uc2:int}
	&\Uc_2(t,x) 
	= 
	\int_0^\infty \pN(t,y,x) \Uc_2(0,y) dy
	+ \int_0^t \pN(t-s,z_2(s),x) ds,
	\quad
	\forall t,x \in\bbR_+,
\\
	\label{eq:Stef:z}
	&\int_{0}^\infty p(t,z_2(t)-y) 
	\big( \Uc_2(0,0)-\Uc_2(0,y) \big) dy 
	= 
	\int_0^t p(t-s,z_2(t)-z_2(s)) ds.
\end{align}
\end{lemma}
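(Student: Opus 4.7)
The plan is to derive both identities via integration-by-parts computations on the moving domain $\Omega_t := \{(s,y) : 0 < s < t,\ y > z_2(s)\}$, against test kernels that solve the backward heat equation in $(s,y)$, with the Stefan condition \eqref{eq:Stefmv} producing the required cancellations of the boundary traces at $y = z_2(s)$.

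I would begin by noting the conservation law $\Uc_2(t,z_2(t)) = \Uc_2(0,0)$, which follows by differentiating $\int_{z_2(t)}^\infty u_2(t,y)\,dy$ and invoking \eqref{eq:StefHE}, \eqref{eq:StefDiri}, and \eqref{eq:Stefmv}. This suggests extending
\[
V(t,x) := \Uc_2(t,x)\,\mathbf{1}_{\{x \geq z_2(t)\}} + \Uc_2(0,0)\,\mathbf{1}_{\{0 \leq x < z_2(t)\}}
\]
continuously to $[0,\infty)$ and then taking the even reflection $\tilde V(t,x) := V(t,|x|)$ to $\bbR$. A pointwise check shows that $\tilde V$ satisfies the heat equation away from $x = \pm z_2(t)$, while at each of those kinks $\partial_x \tilde V$ jumps by $-u_2(t,z_2(t)) = -2$, so distributionally on $\bbR \times (0,\infty)$,
\[
\bigl( \partial_t - \tfrac12 \partial_{xx} \bigr) \tilde V \;=\; \delta(x - z_2(t)) + \delta(x + z_2(t)),
\]
with initial data $\tilde V(0,x) = \Uc_2(0,|x|)$. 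Duhamel on the whole line, together with the folding $p(t,x-y) + p(t,x+y) = \pN(t,y,x)$, then yields \eqref{eq:Uc2:int} upon restriction to $x \geq z_2(t)$.

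For \eqref{eq:Stef:z}, I would instead apply Green's identity to $u_2(s,y)$ on $\Omega_t$ against the test kernel $w(s,y) := \Phi(t-s, z_2(t)-y)$, which satisfies $\partial_s w + \tfrac12 \partial_{yy} w = 0$ and whose trace on $\{y > z_2(t)\}$ vanishes as $s \uparrow t$ (so the terminal boundary term disappears). The Leibniz rule for the moving boundary produces a term $z_2'(s)\, u_2(s,z_2(s))\, w(s,z_2(s))$, and two integrations by parts in $y$ produce boundary contributions at $y = z_2(s)$ involving $\partial_y u_2(s,z_2(s)) = -4 z_2'(s)$ and $\partial_y w(s,z_2(s)) = -p(t-s, z_2(t)-z_2(s))$. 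The Stefan condition \eqref{eq:Stefmv} combined with \eqref{eq:StefDiri} makes the $z_2'(s)$-pieces cancel exactly, leaving the clean identity
\[
\int_0^\infty u_2(0,y)\, \Phi(t, z_2(t)-y)\,dy \;=\; \int_0^t p(t-s, z_2(t) - z_2(s))\,ds.
\]
A final integration by parts in $y$ (converting $u_2(0,y)\,dy$ into $-d\Uc_2(0,y)$) combined with $\int_0^\infty p(t, z_2(t)-y)\,dy = \Phi(t,z_2(t))$ rewrites the left-hand side as $\int_0^\infty p(t, z_2(t)-y)\bigl(\Uc_2(0,0) - \Uc_2(0,y)\bigr)\,dy$, yielding \eqref{eq:Stef:z}.

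The main technical hurdle is the precise cancellation of the multiple boundary contributions at $y = z_2(s)$: the Leibniz correction from the moving boundary and the two integration-by-parts traces must combine so that exactly the $z_2'$-dependent pieces disappear, which is essentially what the specific form of \eqref{eq:Stefmv} is designed to enforce. Some care is also needed for the singular limit $s \uparrow t$ (handled by working on $(0, t-\varepsilon)$ and passing $\varepsilon \downarrow 0$, where $p(\varepsilon,\cdot)$ collapses to a delta and $\Phi(\varepsilon,\cdot)$ to a step function), and for the distributional identity for $\tilde V$ when $z_2(t) = 0$, where the two sources $\delta(x \pm z_2(t))$ coalesce at the Neumann boundary and correctly reproduce the factor $\pN(t-s, 0, x) = 2p(t-s, x)$ in the source integral.
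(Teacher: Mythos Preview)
Your proposal is correct, and the underlying mechanism (Green's identity on the moving domain, with the Stefan condition producing the key cancellation) is exactly what the paper exploits as well. However, the paper organizes the computation differently, and the difference is worth noting.

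Rather than working with the density $u_2$ for \eqref{eq:Stef:z} and with the reflected tail $\tilde V$ for \eqref{eq:Uc2:int}, the paper works throughout with the \emph{distribution function} $U_2(t,x):=\int_{z_2(t)}^x u_2(t,y)\,dy$. The point is that the Stefan condition \eqref{eq:Stefmv} together with \eqref{eq:StefDiri} makes $U_2$ itself a solution of the heat equation on $\{y>z_2(s)\}$: the Leibniz correction $-z_2'(s)\,u_2(s,z_2(s))$ and the boundary flux $-\tfrac12\partial_y u_2(s,z_2(s))$ cancel \emph{inside} $\partial_t U_2$, so the $z_2'$-terms never appear explicitly. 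A single Green's identity for $U_2$ against $p(t-s,x-y)$ then yields the whole-line formula
\[
U_2(t,x)=\int_0^\infty p(t,x-y)\,U_2(0,y)\,dy-\int_0^t p(t-s,x-z_2(s))\,ds,
\]
from which \eqref{eq:Stef:z} follows by setting $x=z_2(t)$ (using $U_2(t,z_2(t))=0$ and $U_2(0,y)=\Uc_2(0,0)-\Uc_2(0,y)$), and \eqref{eq:Uc2:int} follows by the conservation $U_2(t,\infty)=U_2(0,\infty)$ and the reflection $U_2(t,x)+U_2(t,-x)$.

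What each approach buys: the paper's choice of $U_2$ absorbs the Stefan cancellation into a single preliminary observation, so one Green's-identity computation delivers both results with only the simple boundary data $U_2=0$, $\partial_y U_2=2$ at $y=z_2(s)$. Your route is equally valid but splits into two computations; in the second you must track the Leibniz term and two integration-by-parts traces and verify the $z_2'$-cancellation by hand (which you do correctly). Your distributional formulation for \eqref{eq:Uc2:int} is a nice, self-contained alternative to the paper's reflection step.
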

\begin{proof}
Instead of the tail distribution function $ \Uc_2(t,x) $,
let us first consider the distribution function $ \U_2(t,x) := \int_{z_2(t)}^{x} u_2(t,y) dy $.
We adopt the convention that $ \U_2(t,x)|_{x<z_2(t)}:=0 $.
By \eqref{eq:StefHE}, \eqref{eq:StefDiri}--\eqref{eq:Stefmv},
$ \U_2(s,y) $ solves the heat equation in $ \{(s,y): s>0, y>z(s)\} $.
With this, 
for any fixed $ t>0 $ and $ x\in\bbR $,
we integrate Green's identity
\begin{align*}
	\tfrac12 \partial_y((\partial_y p)\U_2-p(\partial_y \U_2)) + \partial_s(p \U_2) =0,
	\
	\text{ where } p = p(t-s,x-y), \ \U_2=\U_2(s,y), 
\end{align*}
over $ \{(s,y):\e <s<t-\e,y>z(s)+\e\} $.
Letting $ \e\to 0 $,
and combining the result with $ \U_2(s,z_2(s))=0 $ and \eqref{eq:StefDiri}, we obtain
\begin{align}
	\label{eq:U2:int}
	\U_2(t,x) = \int_{0}^\infty p(t,x-y) \U_2(0,y)dy -\int_0^t p(t-s,x-z(s)) ds,
	\quad
	\forall t\in\bbR_+, \ x\in\bbR.
\end{align}
Note that the preceding derivation of~\eqref{eq:U2:int} applies to \emph{all} $ x \in\bbR $, including $ x <z_2(t) $.
Setting $ x=z_2(t) $ in \eqref{eq:U2:int},
on the l.h.s.\ we have $ \U_2(t,z_2(0))=0 $.
Further using $ \U_2(0,y) =\Uc_2(0,0)-\Uc_2(0,y) $,
we see that \eqref{eq:Stef:z} follows.

We now turn to showing \eqref{eq:Uc2:int}.
A straightforward differentiation, following by using~\eqref{eq:StefDiri}--\eqref{eq:Stefmv},
gives
\begin{align*}
	\partial_t \U_2(t,\infty) 
	=
	\partial_t \int_{z_2(t)}^\infty u_2(t,x) dx
	&=
	 -z'_2(t) u_2(t,z(t)) + \int_{z(t)} \frac{1}{2} \partial_{xx} u_2(t,x) dx
\\	 
	&
	 =-2z'_2(t) - \frac{1}{2} \partial_{x} u_2(t,z(t)) =0,
\end{align*}
so in particular $ \U_2(0,\infty)=\U_2(t,\infty) $.
Consequently, 
\begin{align}
	\label{eq:U22}
	\Uc_2(t,x) = \U_2(0,\infty)-\U_2(t,x).
\end{align}
Further, as $ U_2(t,x)|_{x\leq 0} =0 $,
\begin{align}
	\notag
	\U_2(t,x) 
	&= 
	\U_2(t,x) + \U_2(t,-x)
\\
	\label{eq:U2:pN}
	&=
	\int_{0}^\infty \pN(t,y,x) \U_2(0,y)dy -\int_0^t \pN(t-s,z(s),x) ds,
	\quad
	\forall t,x\in\bbR_+.
\end{align}
Inserting~\eqref{eq:U2:pN} into the last term in \eqref{eq:U22} yields
\begin{align*}
	\Uc_2(t,x) 
	= 
	\U_2(0,\infty)
	-\int_{0}^\infty \pN(t,y,x) \U_2(0,y)dy + \int_0^t \pN(t-s,z(s),x) ds,
	\quad
	\forall t,x\in\bbR_+.
\end{align*}
Further using $ \U_2(0,\infty) = \int_0^\infty \pN(t,y,x) \U_2(0,\infty) dy $
to write
\begin{align*}
	\U_2(0,\infty) - \int_{0}^\infty \pN(t,y,x) \U_2(0,y)dy
	=
	\int_{0}^\infty \pN(t,y,x) \Uc_2(0,y)dy,
\end{align*}
we see that \eqref{eq:Uc2:int} follows.
\end{proof}

We next turn to the well-posedness of \eqref{eq:Stef:z}.
The existence of a solution to \eqref{eq:Stef:z}
will be established in Lemma~\ref{lem:W:hydro}, Section~\ref{sect:mvbdy},
as a by-product of establishing the hydrodynamic limit of certain Atlas models.
Here we focus the uniqueness and stability of \eqref{eq:Stef:z}.
To this end,
we consider $ w\in \Csp([0,T]) $ that satisfies
\begin{equation} \label{eq:pStef:z}
	\int_{0}^\infty p(t, w(t)-y) \big( \Ucs(\tfrac12,0)-\Ucs(\tfrac12,y)\big) dy 
	= 
	f(t,w(t)) + \int_0^t p(t-s, w(t)-w(s)) ds,
\end{equation}
where $ f \in L^\infty([0,T]\times \bbR) $ is a generic perturbation.
Define a seminorm
\begin{equation} 
	\label{eq:seminorm}
	|w|'_{[0,T]} := \sup\{w(t)-w(t'),~0 \leq t \leq t' \leq T\}
\end{equation}
that measures how nondecreasing the given function is.
\begin{lemma} \label{lem:pStef}
Fixing $ T<\infty $ and $ f_1(t,x),f_2(t,x) \in L^\infty([0,T]\times \bbR) $,
we consider $ w_1 $ and $ w_2 $ satisfying \eqref{eq:pStef:z}
for $ f=f_1 $ and $ f=f_2 $, respectively.
Let $ L := \sup \{ |w_1(t)|, |w_2(t)| : t \leq T \}+1 $.
There exists $C_1=C_1(T,L)<\infty$ such that
\begin{align*}
	\sup_{0\leq t \leq T}(w_1(t)-w_2(t)) 
	\leq 
	C_1 \sum_{i=1,2} 
	\big( |w_i(0)|+ \norm f_i\norm_{L^\infty([0,T] \times\bbR)} +|w_i|'_{[0,T]} \big),
\end{align*}
for all $ f_1,f_2 \in L^\infty([0,T]\times\bbR) $ 
satisfying 
$ 
	\sum_{i=1,2} 
	\big( |w_i(0)|+ \norm f_i\norm_{L^\infty([0,T] \times\bbR)} +|w_i|'_{[0,T]} \big) 
	\leq
	\frac{1}{C_1}.
$
\end{lemma}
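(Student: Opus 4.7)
The plan is to evaluate the two integral equations \eqref{eq:pStef:z} at a point $t^* \in [0,T]$ realizing the supremum $M := \sup_{t \in [0,T]}(w_1(t) - w_2(t))$, and combine a quantitative lower bound on the linearization in $w$ of
\[
G(t,w) := \int_0^\infty p(t,w-y)\bigl(\Ucs(\tfrac12,0)-\Ucs(\tfrac12,y)\bigr) dy
\]
with the near-monotonicity of each $w_i$ (captured by $|w_i|'_{[0,T]}$) to extract the stated stability estimate. Integration by parts, using $g(y) := \Ucs(\tfrac12,0)-\Ucs(\tfrac12,y)$ (so that $g(0)=0$ and $g'(y) = u_1(\tfrac12,y)$), gives $\partial_w G(t,w) = \int_0^\infty u_1(\tfrac12,y) p(t,w-y) dy$. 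Since $u_1(\tfrac12,\cdot)$ is continuous on $[0,\infty)$ with $u_1(\tfrac12,0) = 2$, this is bounded below by some $c_0 = c_0(T,L) > 0$ uniformly on $[0,T] \times [-L,L]$.

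Assuming $M>0$ (else the claim is trivial) and setting $A_i(t) := \int_0^t p(t-s, w_i(t)-w_i(s)) ds$, the case $t^* = 0$ yields $M \leq |w_1(0)| + |w_2(0)|$ directly, while for $t^* > 0$ subtracting \eqref{eq:pStef:z} for $i=1,2$ at $t=t^*$ gives
\[
c_0 M \leq G(t^*, w_1(t^*)) - G(t^*, w_2(t^*)) \leq \|f_1\|_{L^\infty} + \|f_2\|_{L^\infty} + \bigl(A_1(t^*) - A_2(t^*)\bigr).
\]
It then remains to bound $A_1(t^*) - A_2(t^*)$ from above. The key observation is that maximality of $M$ at $t^*$ forces $w_1(t^*) - w_1(s) \geq w_2(t^*) - w_2(s)$ for all $s \leq t^*$. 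Were both $w_1, w_2$ nondecreasing (so $|w_i|'_{[0,T]} = 0$), both arguments would be nonnegative, and the even--decreasing character of $p(\tau,\cdot)$ on $[0,\infty)$ would immediately give $A_1(t^*) \leq A_2(t^*)$. In the general case, I plan to replace each $w_i$ by its nondecreasing envelope $\overline w_i(s) := \sup_{u \leq s} w_i(u)$, which satisfies $0 \leq \overline w_i - w_i \leq |w_i|'_{[0,T]}$, control the substitution error in $A_i$ via the H\"older estimate \eqref{eq:p:Holdx} on $p$, and then apply the nondecreasing comparison to the envelopes. The smallness hypothesis is invoked to confirm that $|w_i(t)| \leq L$ throughout $[0,T]$, so the lower bound on $\partial_w G$ is valid. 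All together this should produce $A_1(t^*) - A_2(t^*) \leq C(|w_1|'_{[0,T]} + |w_2|'_{[0,T]})$, and hence the claim with $C_1 := c_0^{-1}\max(1, C)$.

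The main obstacle will be this envelope replacement. A naive use of \eqref{eq:p:Holdx} with H\"older exponent $\alpha < 1$ produces an error $\sim (|w_i|')^\alpha$, which is merely sublinear, while the Lipschitz case $\alpha = 1$ introduces a non--integrable $\tau^{-1}$ singularity near $\tau = 0$. To recover the \emph{linear} rate in $|w_i|'_{[0,T]}$ that the lemma requires, one must exploit the signed structure $\overline w_i - w_i \geq 0$ at both endpoints $s$ and $t^*$ to produce cancellation, or carry out a finer case analysis partitioning $s \in [0,t^*]$ according to the signs of $w_1(t^*) - w_1(s)$ and $w_2(t^*) - w_2(s)$, using the "good" regime where both are nonnegative (where the contribution is $\leq 0$) and estimating separately the "exceptional" regime whose temporal extent is itself controlled by $|w_i|'_{[0,T]}$.
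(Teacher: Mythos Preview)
Your overall architecture matches the paper's: evaluate both integral equations at a critical time, use a uniform lower bound on $\partial_w G$ (the paper writes $\Lambda$ for your $G$ and derives the same bound $\partial_z\Lambda\geq c_1>0$), and reduce matters to controlling $A_1(t^*)-A_2(t^*)$. The paper uses the first hitting time $t^*=\inf\{t:w_1(t)\geq w_2(t)+\delta\}$ with $\delta=C_1(\varepsilon+\varepsilon'+\varepsilon'')$ rather than the maximizer, but this is cosmetic: both devices yield $w_1(t^*)-w_1(s)\geq w_2(t^*)-w_2(s)$ for all $s\leq t^*$, which is the inequality you also isolate.

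The genuine gap is in your plan for $A_1(t^*)-A_2(t^*)$. Neither route you sketch closes. The envelope replacement, as you yourself note, produces only an $O((\varepsilon')^\alpha)$ error for $\alpha<1$, and the signed structure $\overline w_i-w_i\geq 0$ does not by itself generate the cancellation needed to upgrade this to linear. More seriously, your alternative of bounding the \emph{temporal extent} of the exceptional set $\{s:w_2(t^*)-w_2(s)<0\}$ by $|w_2|'_{[0,T]}$ is false: take $w_2(s)=\varepsilon'\sin(Ns)$ with $N$ large, so that $|w_2|'_{[0,T]}\leq 2\varepsilon'$ while the exceptional set occupies roughly half of $[0,t^*]$.

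What the paper does instead is bound the \emph{integrand} pointwise on the exceptional set and then integrate that pointwise bound over the \emph{entire} interval $[0,t^*]$. Writing $a_i:=w_i(t^*)-w_i(s)$ and $g^*(s):=p(t^*-s,a_1)-p(t^*-s,a_2)$: when $a_2\geq 0$ you already have $g^*(s)\leq 0$; when $a_2<0$, the definition of the seminorm forces $|a_2|\leq \varepsilon':=|w_2|'_{[0,T]}$, so $p(t^*-s,a_2)\geq p(t^*-s,\varepsilon')$, while trivially $p(t^*-s,a_1)\leq p(t^*-s,0)$, giving $g^*(s)\leq p(t^*-s,0)-p(t^*-s,\varepsilon')$. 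This upper bound is nonnegative everywhere, so one may simply integrate it over all of $[0,t^*]$; the identity \eqref{eq:pId} then yields
\[
\int_0^{t^*} g^*(s)\,ds \;\leq\; \sqrt{\tfrac{2t^*}{\pi}}\bigl(1-e^{-\varepsilon'^2/(2t^*)}\bigr)+2\varepsilon' \;<\; 4\varepsilon',
\]
using $1-e^{-\xi}\leq 2\sqrt{\xi}$. No estimate on the measure of the exceptional set is needed, and the bound is linear in $\varepsilon'$ as required.
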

\noindent
Indeed, when $ f_1=f_2=0 $, Lemma~\ref{lem:pStef} yields
\begin{corollary}\label{cor:unique}
The solution to \eqref{eq:zeq} is unique.
\end{corollary}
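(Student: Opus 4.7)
The plan is to invoke Lemma~\ref{lem:pStef} with the trivial perturbations $f_1=f_2=0$. Given two solutions $z^{(1)}_\star,z^{(2)}_\star$ of \eqref{eq:zeq}, first I would translate the time variable by setting $w_i(t):=z^{(i)}_\star(t+\tfrac12)$ for $t\geq 0$. By \eqref{eq:zeq}, each $w_i$ lies in $\Csp(\bbR_+)$, is nondecreasing, satisfies $w_i(0)=0$, and, after the change of variables $s\mapsto s-\tfrac12$ in the right-hand integral of \eqref{eq:zeq}, satisfies the integral equation \eqref{eq:pStef:z} with $f_i\equiv 0$.

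Next I would verify that the smallness hypothesis of Lemma~\ref{lem:pStef} is automatic. Fix any $T<\infty$ and set $L:=\sup\{|w_1(t)|,|w_2(t)|:t\leq T\}+1$, which is finite by continuity. Then $|w_i(0)|=0$ and $\|f_i\|_{L^\infty([0,T]\times\bbR)}=0$ trivially, while the seminorm \eqref{eq:seminorm} satisfies $|w_i|'_{[0,T]}=0$ because $w_i$ is nondecreasing (the supremum in \eqref{eq:seminorm} is attained at $t=t'$ and equals $0$). Hence the sum appearing in the hypothesis of Lemma~\ref{lem:pStef} is $0$, which is certainly $\leq 1/C_1$.

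Applying Lemma~\ref{lem:pStef} then yields $\sup_{t\in[0,T]}(w_1(t)-w_2(t))\leq 0$. Swapping the roles of $w_1$ and $w_2$ gives the reverse inequality, so $w_1\equiv w_2$ on $[0,T]$. Since $T$ was arbitrary, $z^{(1)}_\star\equiv z^{(2)}_\star$ on $[\tfrac12,\infty)$, which is the claimed uniqueness.

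There is essentially no obstacle here, since the corollary is a direct specialization of the stability statement in Lemma~\ref{lem:pStef}; the only point that warrants care is checking that the seminorm $|\Cdot|'_{[0,T]}$ indeed vanishes on nondecreasing functions so that the smallness hypothesis is satisfied with room to spare.
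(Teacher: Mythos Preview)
Your proposal is correct and follows exactly the approach the paper intends: the paper's own proof is the single remark that setting $f_1=f_2=0$ in Lemma~\ref{lem:pStef} yields the corollary, and you have simply filled in the verification of the hypotheses (notably that $|w_i|'_{[0,T]}=0$ for nondecreasing $w_i$) and the symmetric application to get both inequalities.
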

\begin{proof}[Proof of Lemma~\ref{lem:pStef}]
To simplify notations, let 
$ \e:= |f_1|_{L^\infty([0,T] \times\bbR)} + |f_2 |_{L^\infty([0,T] \times\bbR)} $, 
$ \e':=|w_1|'_{[0,T]}+|w_2|'_{[0,T]} $
and $ \e'':= |w_1(0)|+|w_2(0)| $.

Let 
\begin{align}
	\label{eq:LHSof}
	\Lambda(t,z)
	&:=
	\int_0^{\infty} p(t,z-y) (\Ucs(\tfrac12,0)-\Ucs(\tfrac12,y)) dy
\\
	&=
	\int_{-\infty}^{z} p(t,y) (\Ucs(\tfrac12,0)-\Ucs(\tfrac12,z-y)) dy
\end{align}
denote the expression on the l.h.s.\ of \eqref{eq:pStef:z}.
From the explicit expressions \eqref{eq:U1}--\eqref{eq:u1},
we have that $ \partial_x (-\Ucs(\frac12,z-y)) = u_1(\frac12,z-y) >0 $, $ \forall y\leq z $,
so $ \partial_z \Lambda(t,z) > 0$, $ \forall z \geq 0 $.
Consequently, there exists $c_1 = c_1(T,L)>0$ such that
\begin{equation} \label{eq:lbpartialF}
	\partial_z \Lambda(t,z) \geq c_1 >0, \quad \forall 0 \leq  z \leq L, \ 0 \leq t \leq T.
\end{equation}
Setting $ C_1:=\frac{4}{c_1}\vee 1 $
and $ \delta:=C_1 (\e+\e'+\e'') \leq 1 $,
we write $ w_2^{\delta}(t):=w_2(t)+\delta $ to simplify notations,
and consider the first time $t^{*}:=\inf\{t \leq T: w_1(t) \geq w_2^{\delta}(t) \} $
when $ w_1 $ hits $ w^\delta_2 $.
Indeed, since $ C_1 \geq 1 $, we have $ w_1(0) \leq w_2(0)+|w_2(0)-w_1(0)| < w_2(0)+\delta $,
so in particular $ t^* >0 $.
Taking the difference of \eqref{eq:pStef:z} for $ (t,f)=(t^*,f_1) $ and for $ (t,f)=(t^*,f_2) $, 
we obtain
\begin{equation}\label{eq:ubdelta}
	\Lambda(t^{*},w_1(t^{*})) - \Lambda(t^{*},w_2(t^{*})) 
	=
	\Lambda(t^{*},w_2^{\delta}(t^{*})) - \Lambda(t^{*},w_2(t^{*})) 
	\leq 
	\varepsilon + \int_0^{t^{*}} g^{*}(s) ds,
\end{equation}
where $ g^{*}(s):=p(t^{*}-s,w_1(t^{*})-w_1(s)) - p(t^{*}-s,w_2(t^{*})-w_2(s)) $.
Next, using $ w_1(s) \leq w_2(s)+\delta $, $ \forall s\leq t^* $, we have
\begin{align}
	\label{eq:w12}
	w_1(t^*)-w_1(s) = w_2(t^*)+\delta-w_1(s)
	\geq 
	w_2(t^*) - w_2(s).
\end{align}
To bound the function $ g^*(s) $, 
we consider the separately cases \textit{i}) $ w_2(t^*) - w_2(s) \geq 0 $;
and \textit{ii}) $ w_2(t^*) - w_2(s) < 0 $.
For case (\textit{i}),
by \eqref{eq:w12} we have 
$ |w_1(t^{*})-w_1(s))| \geq |w_2(t^{*})-w_2(s)| $,
so $ g^*(s) \leq 0 $.
For case (\textit{ii}),
using $ 0 > w_2(t^*) - w_2(s) \geq -\e' $ 
we have $ g^*(s) \leq p(t^*-s,0) -p(t^*-s, \varepsilon') $.
Combining these bounds with the readily verified identity
\begin{align}
	\label{eq:pId}
	\int_0^t p(t-s,x) ds = 2 t p(t,x)- 2|x| \erfc(t,|x|),
\end{align}
we obtain
\begin{align}
	\notag
	\int_0^{t^*} g_{*}(s)ds 
	&\leq
	\int_0^{t^*} (p(t^*-s,0) -p(t^*-s, \varepsilon')) ds
	\leq 
	2t_*p(t^*,0) -2t_*p(t^*, \varepsilon') + 2|\varepsilon'|
\\
	\label{eq:estgstar}
	&=
	\sqrt{\tfrac{2t^*}{\pi}} (1-\exp(-\tfrac{{\e'}^{2}}{2t^*}))+2 
	\varepsilon' 
	<
	4 \varepsilon',
\end{align}
where we used $ (1-e^{-\xi}) \leq 2\sqrt{\xi} $, $ \forall \xi \in\bbR_+ $,
in the last inequality.
Now, if $ t^* \leq T $,
combining \eqref{eq:estgstar} with \eqref{eq:ubdelta} and \eqref{eq:lbpartialF}
yields $ \delta c_1 < \e + 4 \e'$, 
leading to a contradiction. 
Consequently, we must have $ t^* >T $.
\end{proof}

We next establish a property of $ (\Ucs(t,x),\z(t)) $,
that will be used toward the proof of Theorems~\ref{thm:aldous} and \ref{thm:hydro}.
\begin{lemma}\label{lem:Us}
For any $ (\Ucs(t,x),\z(t); t\geq\frac12) $ satisfying \eqref{eq:Ucs:move}--\eqref{eq:zeq},
we have
\begin{align}
	\label{eq:Usc:cnsv}
	\Ucs(t,\z(t)) = \tfrac{4}{\sqrt{\pi}},
	\quad
	\forall t\geq \tfrac12.
\end{align}
\end{lemma}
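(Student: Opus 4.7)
The plan is to verify the identity in two stages: establish the base case at $t = 1/2$ by direct computation, then propagate constancy to $t > 1/2$ using the structure of \eqref{eq:Ucs:move}--\eqref{eq:zeq}. At $t = 1/2$ we have $\z(1/2) = 0$, so plugging $x = 0$ into \eqref{eq:Ucs:abs} and using $p(u, 0) = (2\pi u)^{-1/2}$ yields
\[
	\Ucs(\tfrac12, 0) = 2 p(\tfrac12, 0) + \int_0^{1/2} 2 p(\tfrac12-s, 0) ds = \tfrac{2}{\sqrt{\pi}} + \tfrac{2}{\sqrt{\pi}} = \tfrac{4}{\sqrt{\pi}}.
\]

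For $t > 1/2$ I would first rewrite \eqref{eq:Ucs:move} as an evolution from time $1/2$ via the Neumann heat-semigroup identity $\int_0^\infty \pN(u_1, y, x) \pN(u_2, 0, y) dy = \pN(u_1+u_2, 0, x) = 2 p(u_1+u_2, x)$:
\[
	\Ucs(t, x) = \int_0^\infty \pN(t-\tfrac12, y, x) \Ucs(\tfrac12, y) dy + \int_{1/2}^t \pN(t-s, \z(s), x) ds
\]
for $x \geq 0$, $t \geq 1/2$. Setting $x = \z(t)$ and decomposing $\pN(u, y, x) = p(u, y-x) + p(u, y+x)$ (with $p$ even) splits $\Ucs(t, \z(t)) = G_-(t) + G_+(t)$, where
\[
	G_{\mp}(t) := \int_0^\infty p(t-\tfrac12, \z(t) \mp y) \Ucs(\tfrac12, y) dy + \int_{1/2}^t p(t-s, \z(t) \mp \z(s)) ds.
\]
A direct application of \eqref{eq:zeq} collapses $G_-$ to $G_-(t) = \Ucs(\tfrac12, 0) \int_0^\infty p(t-\tfrac12, \z(t)-y) dy = \Ucs(\tfrac12, 0) \, \erf(t-\tfrac12, \z(t))$.

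The main obstacle is then to establish the companion identity
\[
	(\star)\colon \int_{1/2}^t p(t-s, \z(t)+\z(s)) ds = \int_0^\infty p(t-\tfrac12, \z(t)+y)[\Ucs(\tfrac12, 0) - \Ucs(\tfrac12, y)] dy,
\]
which would give $G_+(t) = \Ucs(\tfrac12, 0) \, \erfc(t-\tfrac12, \z(t))$ by the same telescoping, hence $\Ucs(t, \z(t)) = \Ucs(\tfrac12, 0) [\erf + \erfc](t-\tfrac12, \z(t)) = 4/\sqrt{\pi}$. My approach would mirror Lemma~\ref{lem:StefInt}: the auxiliary function $V(t, x) := \int_0^\infty p(t-\tfrac12, x-y)[\Ucs(\tfrac12, 0) - \Ucs(\tfrac12, y)] dy - \int_{1/2}^t p(t-s, x-\z(s)) ds$, defined for $x \in \bbR$, satisfies the heat equation on $\bbR \times (\tfrac12, \infty)$ with a delta sink on the moving curve $\{(s, \z(s))\}$; \eqref{eq:zeq} says $V(t, \z(t)) = 0$, while $(\star)$ is precisely the assertion $V(t, -\z(t)) = 0$. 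In the classical PDE setting (Remark~\ref{rmk:Stefan} and \cite{friedman10}), $(\star)$ is immediate by setting $x = -z_2(t)$ in the Green's-identity representation \eqref{eq:U2:int} together with the convention $\U_2 = 0$ below the moving boundary, and Corollary~\ref{cor:unique} identifies the classical $z_2$ with our $\z$. A self-contained derivation within the integral framework, propagating the vanishing of $V$ from the curve $\{\z(t)\}$ to the reflected curve $\{-\z(t)\}$, is where I expect the technical crux to lie.
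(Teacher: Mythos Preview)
Your approach is essentially the paper's: your auxiliary function $V$ coincides (after the time shift $t \mapsto t - \tfrac12$) with the function $\Us$ the paper defines, and your reduction of the problem to $(\star)$, i.e.\ to $V(t,-\z(t)) = 0$, is exactly the paper's key step. The ``technical crux'' you flagged is filled in not via classical Stefan-problem existence (which the paper never establishes, cf.\ Remark~\ref{rmk:Stefan}), but by a direct uniqueness argument: away from the curve $\{x = \z(t)\}$ the function $V$ solves the heat equation, hence in particular on the one-sided domain $\{(t,x): t > \tfrac12,\ x < \z(t)\}$; there the initial data $V(\tfrac12,x)|_{x \le 0} = 0$ and, by \eqref{eq:zeq}, the Dirichlet data $V(t,\z(t)) = 0$ both vanish, so by uniqueness for the heat equation $V \equiv 0$ throughout that region, yielding $V(t,-\z(t)) = 0$. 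This is precisely the ``self-contained derivation within the integral framework'' you anticipated, and it requires nothing beyond the maximum principle.
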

\begin{remark}
As $ \Ucs(t,x) $ represents the hydrodynamic limit of 
the scaled tail distribution function~$ \Uc_K(t,x) := \frac{1}{\sqrt{K}}\#\{ \XKi(t) >x \} $,
equation~\eqref{eq:Usc:cnsv} is a statement of \emph{conservation of particles}
within the moving boundary phase, in the hydrodynamic limit.
\end{remark}
\begin{proof}
Fixing such $ (\Ucs(t,x),\z(t)) $,
we \emph{define}
\begin{align*}
	\Us(t+\tfrac12,x) 
	:= 
	\int_0^\infty p(t,x-y) (\Ucs(\tfrac12,0)-\Ucs(\tfrac12,y)) dy
	- \int_0^t p(t-s,x-\z(s+\tfrac12)) ds.
\end{align*}
From this expression,
it is straightforward to verify that, for any fixed $ T<\infty $,
$ \Us(\Cdot+\frac12,\Cdot) \in \Csp([0,T]\times\bbR) \cap L^\infty([0,T] \times\bbR) $
solves the heat equation on $ \{(t,x) : t>0, x< \z(s) \} $.
Further, $ \Us(\frac12,x)|_{x\leq 0}=0 $, and, by \eqref{eq:zeq},
\begin{align}
	\label{eq:Us:z=0}
	\Us(t+\tfrac12,\z(t+\tfrac12))=0.
\end{align}
From these properties of $ \Us(t+\frac12,x) $,
by the uniqueness of the heat equation on the domain $ \{(t,x):t\in\bbR, x< \z(t)\} $, 
we conclude that $ \Us(t+\frac12,x)|_{x\leq\z(t+\frac12)}=0 $.
Therefore,
\begin{align}
	\notag
	&
	\Us(\tfrac12+t,x) 
	= \Us(\tfrac12+t,x) +\Us(\tfrac12+t,-x) 
\\
	&
	\notag
	= 
	\int_0^\infty \pN(t,y,x) (\Ucs(\tfrac12,0)-\Ucs(\tfrac12,y)) dy
	-
	\int_0^t \pN(t-s,\z(s+\tfrac12),x) ds,
\\
	\label{eq:Us:refl}
	&
	= 
	\Ucs(\tfrac12,0)
	-\int_0^\infty \pN(t,y,x) \Ucs(\tfrac12,y) dy
	-
	\int_0^t \pN(t-s,\z(s+\tfrac12),x) ds,
	\quad
	\forall x \in\bbR_+.
\end{align}
Next, set $ t=\frac12 $ in \eqref{eq:Ucs:move},
and write $ 2p(\frac12,y)=\pN(t,0,y) $ to obtain
\begin{align}
	\label{eq:Ucs:itr}
	\Ucs(\tfrac12,y) = \pN(\tfrac12,0,y) + \int_0^\frac12 \pN(\tfrac12-s,\z(s),y) ds.
\end{align}
Inserting this expression~\eqref{eq:Ucs:itr} of $ \Ucs(\frac12,y) $ 
into \eqref{eq:Us:refl},
followed by using the semigroup property 
$ \int_0^\infty \pN(t,y,x)\pN(\tfrac12,z,y) dy = \pN(t+\frac12,z,y) $,
we obtain
\begin{align*}
	\Us(t+\tfrac12,x) 
	&= 
	\Ucs(\tfrac12,0)
	- \pN(t+\tfrac12,0,x)
	- \int_0^{t+\frac12} \pN(t-s,\z(s+\tfrac12),x) ds
\\
	&= \Ucs(\tfrac12,0) - \Ucs(t+\tfrac12,x),
	\quad
	\forall x \in\bbR_+.
\end{align*}
Setting $ x=\z(t+\frac12) $ and using \eqref{eq:Us:z=0} on the l.h.s.,
we conclude the desired identity~\eqref{eq:Usc:cnsv}.
\end{proof}

As will be needed toward the proof of Theorem~\ref{thm:aldous} and \ref{thm:hydro},
we next show that $ \z(t) $ grows quadratically near $ t=\frac12 $.

\begin{lemma}\label{lem:zqd}
For any solution $ \z(\Cdot+\frac12) $ to the integral equation~\eqref{eq:zeq},
we have
\begin{align}
	\label{eq:zdq}
	\lim_{t\downarrow 0} \{ t^{-2}\z(t+\tfrac12) \} = \tfrac{2}{\sqrt{\pi}}.
\end{align}
\end{lemma}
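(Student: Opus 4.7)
The plan is to reparametrize $\widehat{z}(\tau) := \z(\tau + \tfrac{1}{2})$, which is continuous and nondecreasing on $\Rp$ with $\widehat{z}(0) = 0$, and then carry out a matched asymptotic expansion of~\eqref{eq:zeq} as $\tau \downarrow 0$. Substituting $s = \sigma + \tfrac{1}{2}$ in~\eqref{eq:zeq} transforms the integral equation into
\begin{equation*}
	\int_0^\infty p(\tau, \widehat{z}(\tau) - y)\, g(y)\, dy = \int_0^\tau p(\tau - \sigma, \widehat{z}(\tau) - \widehat{z}(\sigma))\, d\sigma,
\end{equation*}
where $g(y) := \Ucs(\tfrac{1}{2}, 0) - \Ucs(\tfrac{1}{2}, y)$.

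The first step is to Taylor-expand $g$ at $y = 0$. Since $g'(y) = u_1(\tfrac{1}{2}, y)$, the Dirichlet condition~\eqref{eq:DiriBC<} gives $g'(0) = 2$. Differentiating the explicit formula $u_1(\tfrac{1}{2}, y) = 4y\, p(\tfrac{1}{2}, y) + 4\, \erfc(\tfrac{1}{2}, y)$ yields $g''(0) = g'''(0) = 0$ and $g^{(4)}(0) = u_1'''(\tfrac{1}{2}, 0) = -16/\sqrt{\pi}$, so that $g(y) = 2y + h(y)$ with $h(y) = O(y^4)$ as $y \to 0$.

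Next, a bootstrap from the integral equation would successively refine the a priori bound $\widehat{z}(\tau) \to 0$ (from continuity at $0$) to $\widehat{z}(\tau) = o(\sqrt{\tau})$, and then to $\widehat{z}(\tau) = O(\tau^2)$. With the latter in hand, I would execute the final matched expansion. For the LHS, the main piece $2\int_0^\infty y\, p(\tau, \widehat{z} - y)\, dy$ evaluates in closed form via $\int_0^\infty y\, p(\tau, a - y)\, dy = \tau\, p(\tau, a) + a\, \erf(\tau, a)$, which for $a = \widehat{z}(\tau) = O(\tau^2)$ reads $\widehat{z}(\tau) + \sqrt{2\tau/\pi} + o(\tau^2)$. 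The correction $\int_0^\infty p(\tau, \widehat{z} - y)\, h(y)\, dy$ is controlled via $|h(y)| \leq C y^4$ near $0$ together with $\int_0^\infty p(\tau, y)\, y^4\, dy = \tfrac{3\tau^2}{2}$, yielding an explicit $O(\tau^2)$ contribution whose coefficient is read off from $g^{(4)}(0)$. On the RHS, the bound $\widehat{z}(\tau) - \widehat{z}(\sigma) = O(\tau^2)$ uniformly in $\sigma \in [0, \tau]$ renders the exponent in $p(\tau - \sigma, \widehat{z}(\tau) - \widehat{z}(\sigma))$ of order $O(\tau^3)$, so $\int_0^\tau p(\tau - \sigma, \widehat{z}(\tau) - \widehat{z}(\sigma))\, d\sigma = \int_0^\tau p(\tau - \sigma, 0)\, d\sigma + O(\tau^{7/2}) = \sqrt{2\tau/\pi} + o(\tau^2)$.

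Matching LHS and RHS, the $\sqrt{2\tau/\pi}$ terms cancel, and the remaining $\tau^2$ coefficients give $\lim_{\tau \downarrow 0} \widehat{z}(\tau)/\tau^2 = 2/\sqrt{\pi}$, as claimed. The principal difficulty is executing the bootstrap cleanly: each refinement of the rate of decay of $\widehat{z}$ near $0$ is needed to suppress the remainders in the subsequent Taylor expansion of the singular kernels $p(\tau, \widehat{z} - \cdot\,)$ and $p(\tau - \sigma, \cdot)$, particularly near $\sigma = \tau$ and near the left endpoint $y = 0$ of the half-line integral.
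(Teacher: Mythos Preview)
Your overall strategy --- Taylor-expand the LHS via the derivatives of $g(y)=\Ucs(\tfrac12,0)-\Ucs(\tfrac12,y)$ and compare against the RHS --- is the same as the paper's, and your computation of $g'(0)=2$, $g''(0)=g'''(0)=0$, $g^{(4)}(0)=-16/\sqrt{\pi}$ agrees with theirs. The bootstrap to $\widehat z(\tau)=o(\sqrt\tau)$ is also what the paper does first. However, there is a genuine gap in your treatment of the RHS at the final matching step.

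You assert that, once $\widehat z(\tau)=O(\tau^2)$, the exponent in $p(\tau-\sigma,\widehat z(\tau)-\widehat z(\sigma))$ is of order $O(\tau^3)$ uniformly in $\sigma$, giving $\int_0^\tau p(\tau-\sigma,\widehat z(\tau)-\widehat z(\sigma))\,d\sigma=\sqrt{2\tau/\pi}+O(\tau^{7/2})$. But the exponent equals $\tfrac{(\widehat z(\tau)-\widehat z(\sigma))^2}{2(\tau-\sigma)}$, and from $0\le\widehat z(\tau)-\widehat z(\sigma)\le C\tau^2$ you only get $\le C\tau^4/(\tau-\sigma)$, which diverges as $\sigma\to\tau$. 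Using the H\"older estimate $|p(t,0)-p(t,x)|\le C|x|^\alpha t^{-(1+\alpha)/2}$ with $|x|\le C\tau^2$ gives an error of order $\tau^{(1+3\alpha)/2}$, i.e.\ $o(\tau^{2-\e})$ for every $\e>0$, but never $o(\tau^2)$. Since the limit you are after sits precisely at the $\tau^2$ level, this shortfall is fatal. A related issue is that ``matching $\tau^2$ coefficients'' presupposes that $\widehat z(\tau)/\tau^2$ converges, which is the statement to be proved.

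The paper circumvents both problems by proving the $\limsup$ and $\liminf$ separately. The upper bound is easy: monotonicity of $\widehat z$ gives $p(\tau-\sigma,\widehat z(\tau)-\widehat z(\sigma))\le p(\tau-\sigma,0)$, so the RHS is at most $\sqrt{2\tau/\pi}$, and your LHS expansion yields $\limsup\widehat z(\tau)/\tau^2\le 2/\sqrt\pi$. The lower bound is the delicate part and requires an idea you are missing. Setting $b:=\liminf\widehat z(\tau)/\tau^2$, one picks $t_n\downarrow 0$ with $|\widehat z(t_n)/t_n^2-b|<1/n$ \emph{and} $\widehat z(s)\ge (b-1/n)s^2$ for all $s\in(0,t_n]$ (possible by the definition of $\liminf$). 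Then $\widehat z(t_n)-\widehat z(s)\le b(t_n^2-s^2)+2t_n^2/n$, and since $t_n^2-s^2=(t_n+s)(t_n-s)\le 2t_n(t_n-s)$, the ratio $(\widehat z(t_n)-\widehat z(s))^2/(t_n-s)$ is now of order $t_n^2(t_n-s)$ plus negligible terms, which is integrable with the correct order in $t_n$. This structural control near $\sigma=\tau$ is exactly what allows the RHS to be bounded below with precision $o(\tau^2)$ along the subsequence; without it your argument cannot close.
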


\begin{remark}
For sufficiently smooth solutions to the \ac{PDE} \eqref{eq:PDE>},
one can easily calculate 
$ 
	\frac{d^2~}{dt^2}\z(\frac12) = -\frac18 \partial_{xxx} u_1(\frac12,0) 
	= \frac{2}{\sqrt{\pi}}
$
by differentiating \eqref{eq:StefBC} and \eqref{eq:HE>}.
Here, as we take the integral equation \eqref{eq:zeq} 
as the definition of the Stefan problem,
we prove Lemma~\ref{lem:zqd} by a different, indirect method,
which does not assume the smoothness of of $ \z $.
\end{remark}
\begin{proof}
We begin by deriving a useful identity.
Write
$ 	
	\int_0^t p(t-s,x) ds
	=
	-\int_0^t \int_{-\infty}^x \partial_{yy} \erf(t-s,y) ds,
$
use $ -\partial_{yy} \erf(t-s,y) = 2\partial_s \erf(t-s,y) $,
swap the double integrals, and integrate over $ s\in[0,t] $. 
With $ \erf(0,y) = \ind_{[0,\infty)}(y) $,
we obtain 
\begin{align}
	\label{eq:useful}
	\int_0^t p(t-s,x) ds 
	= 
	2\int_{-\infty}^{x} (\erf(t,y)-\ind_{[0,\infty)}(y)) ds
	=
	2\int_{-\infty}^{-|x|} \erf(t,y) ds.	
\end{align}

We now begin the proof of \eqref{eq:zdq}.
Let $ \Lambda(t,x) $ be as in \eqref{eq:LHSof}.
Recall from \eqref{eq:U1} that 
$ \partial_y \Ucs(\frac12,y) = -u_1(\frac12,y) $,
for $ u_1(\frac12,y) $ defined in \eqref{eq:u1}.
Integrating by parts followed by a change of variable $ y \mapsto \frac{y}{\sqrt{t}} $ yields
\begin{align}
	\notag
	\Lambda(t,x) 
	&= 
	-\int_0^\infty \partial_y\big(\Ucs(\tfrac12,0)-\Ucs(\tfrac12,y)\big) \erf(t,x-y) dy
\\
	\label{eq:LHSof:}
	&=
	\sqrt{t} \int_0^\infty u_1(\tfrac12,\sqrt{t} y) \erf(1,\tfrac{x}{\sqrt{t}}-y) dy.
\end{align}
From the explicit expression~\eqref{eq:u1} of $ u_1(\frac12,y) $,
we see that $ u_1(\frac12,\Cdot) \in \Csp^\infty(\bbR_+) \cap L^\infty(\bbR_+) $,
and that $ u_1(\frac12, 0)=2 $, $ \partial_y u_1(\frac12, 0) = \partial_{yy}u_1(\frac12,0) =0 $,
and $  -\partial_{yyy} \u(0) = \frac{16}{\sqrt{\pi}} =: a_3 $.
Using these properties
to Taylor-expand $ u_1(\frac12,\sqrt{t}y) $ in \eqref{eq:LHSof:} yields
\begin{align}
	\label{eq:LHS:Taylor}
	\Lambda(t,x) 
	= 
	t^{\frac12} \Lambda_0(\tfrac{x}{\sqrt{t}}) 
	- t^{2} \Lambda_3(\tfrac{x}{\sqrt{t}}) 
	+ t^{\frac52} \Lambda_4(t,x),
\end{align}
where $ \Lambda_4(t,x) $ is a \emph{bounded} remainder function 
in the sense that $ \lim\limits_{t\downarrow 0} \sup\limits_{x\in\bbR}|\Lambda_4(t,x)| <\infty $, 
and
\begin{align}
	\label{eq:Lambda0}
	\Lambda_0(x) &:= 2 \int_0^\infty \erf(1,x-y) dy
	=
	2 \int_{-\infty}^x \erf(1,y) dy,
\\
	\label{eq:Lambda3}
	\Lambda_3(x) &:=  \frac{a_3}{6} \int_0^\infty y^3 \erf(1,x-y) dy.
\end{align}
Insert the expression~\eqref{eq:LHS:Taylor} into \eqref{eq:zeq},
we obtain
\begin{align}
	\label{eq:qd:ineq:}
	t^{\frac12} \Lambda_0(\tfrac{w(t)}{\sqrt{t}}) 
	- t^{2} \Lambda_3(\tfrac{w(t)}{\sqrt{t}}) 
	+ t^{\frac12} \Lambda_4(t,w(t))
	=
	\int_0^t p(t-s,w(t)-w(s)) ds.
\end{align}

The strategy of the proof is to extract
upper and lower bounds on $ \frac{w(t)}{\sqrt{t}} $
from \eqref{eq:qd:ineq:}.
We begin with the upper bound.
On the r.h.s.\ of \eqref{eq:qd:ineq:},
using $ p(t-s,w(t)-w(s)) \leq p(t-s,0) $,
followed by applying the identity~\eqref{eq:useful}, we have that
\begin{align}
	\label{eq:qd:ineq}
	t^{\frac12} \Lambda_0(\tfrac{w(t)}{\sqrt{t}}) 
	- t^{2} \Lambda_3(\tfrac{w(t)}{\sqrt{t}}) 
	+ t^{\frac52} \Lambda_4(t,x)
	\leq 
	t^{\frac12} \Lambda_0(0).
\end{align}
Dividing \eqref{eq:qd:ineq} by $ t^{\frac12} $ and letting $ t\downarrow 0 $,
we conclude that $ \lim_{t\downarrow 0} \Lambda_0(\frac{w(t)}{\sqrt{t}}) = \Lambda_0(0) $.
As $ x\mapsto \Lambda_0(x) $ is strictly increasing,
we must have $ \lim_{t\downarrow 0} \frac{w(t)}{\sqrt{t}} = 0 $.
Now, dividing both sides of \eqref{eq:qd:ineq} by $ t^2 $,
and letting $ t\downarrow 0 $ using $ \lim_{t\downarrow 0} \frac{w(t)}{\sqrt{t}} = 0 $,
we further deduce that
\begin{align}
	\label{eq:wLambda3<}
	\lim_{t\downarrow 0} t^{-\frac32} \big(\Lambda_0(\tfrac{w(t)}{\sqrt{t}})-\Lambda_0(0) \big) 
	- \Lambda_3(0) 
	\leq 0.
\end{align}
From the explicit expression~\eqref{eq:Lambda0} of $ \Lambda_0(x) $,
we have that 
\begin{align}
	\label{eq:Lambda0:diff}
	\tfrac{d~}{dx} \Lambda_0(0) =1.
\end{align}
Using~\eqref{eq:Lambda0:diff} to Taylor-expanding 
the expression $ \Lambda_0(\tfrac{w(t)}{\sqrt{t}}) $ in \eqref{eq:wLambda3<}
to the first order,
we thus conclude the desired upper bound
$ 
	\limsup_{t\downarrow 0} \frac{w(t)}{t^2} \leq \Lambda_3(0) = \frac{a_3}{8} = \frac{2}{\sqrt{\pi}} 
$.

Having established the desired upper bound on $ \frac{w(t)}{t^2} $,
we now turn to the lower bound.
Let $ b:=\liminf_{t\downarrow 0} \frac{w(t)}{t^2} $.
Since $ 0\leq b\leq \frac{2}{\sqrt{\pi}}<\infty $,
there exists $ t_n\downarrow 0 $ such that
\begin{align}
	\label{eq:liminf:b}
	|\tfrac{w(t_n)}{t_n^2} - b| <\tfrac1n,
	\quad
	\tfrac{w(s)}{s^2} > b-\tfrac1n, \ \forall s\in (0,t_n].
\end{align}
As $ t\mapsto w(t) $ is non-decreasing,
by \eqref{eq:liminf:b} we have
\begin{align*}
	|w(t_n)-w(s)|
	=
	w(t_n)-s(s)
	\leq
	(bt^2_n+\tfrac{t^2_n}{n}) - (bs^2-\tfrac{s^2}{n})
	\leq
	b (t^2_n-s^2) + \tfrac{2t^2_n}{n},
	\quad
	\forall s\leq t_n.
\end{align*}
Taking the square of the preceding inequality further yields
\begin{align}
	\label{eq:wts}
	|w(t_n)-w(s)|^2 
	\leq 
	(\tfrac{2t_n^2}n)^2 + \tfrac{4bt_n^2(t^2_n-s^2)}n + b^2(t_n^2-s^2)^2,
	\quad
	\forall s\leq t_n.
\end{align}
Use this inequality~\eqref{eq:wts} to write
\begin{align*}
	p(&t_n-s,w(t_n)-w(s)) 
	\geq 
	p(t_n-s,\tfrac{2t_n^2}n)
	\exp\Big( 
		-\tfrac{1}{2(t_n-s)} \Big(\tfrac{4bt_n^2(t^2_n-s^2)}n + b^2(t^2_n-s^2)^2 \Big)
	\Big)
\\
	&=
	p(t_n-s,\tfrac{2t_n^2}n)
	\exp\Big( 
		-\tfrac{2bt_n^2(t_n+s)}n \Big)
	\exp\Big(
		-\tfrac{b^2}{2}(t_n-s)(t_n+s)^2
	\Big)
\\
	&\geq
	p(t_n-s,\tfrac{2t_n^2}n) 
	\exp\Big( -\tfrac{2bt_n^2(2t_n)}n \Big)
	\exp\Big( - \tfrac{b^2}{2}(t_n-s)(2t_n)^2 \Big).
\end{align*}
Within the last expression,
using $ e^{-\xi} \geq 1-\xi $
for $ \xi=\tfrac{b^2}{2}(t_n-s)(2t_n)^2 $,
and using 
$ 
	|p(t_n-s,\tfrac{2t_n^2}n) 
	\exp( -\tfrac{2bt_n^2(2t_n)}n )
	\leq
	\frac{1}{\sqrt{t_n-s}},
$
we obtain
\begin{align}
	\label{eq:pwts}
	p(t_n-s,w(t_n)-w(s)) 
	\geq
	p(t_n-s,\tfrac{2t_n^2}n) 
	\exp\big(-\tfrac{2bt_n^2(2t_n)}n \big)
	- \tfrac{b^2}{2}(t_n-s)^{\frac12}(2t_n)^2.
\end{align}
Now, integrate \eqref{eq:pwts} over $ s\in[0,t_n] $,
using the identity~\eqref{eq:useful} to obtain
\begin{align}
	\notag
	\int_0^{t_n} p(t_n-s,w(t_n)-w(s)) ds
	&\geq
	e^{-\frac{2bt_n^2(2t_n)}n}
	\int_0^{t_n} p(t_n-s,\tfrac{2t_n^2}n) ds
	-
	4b^2(t_n)^{\frac52}
\\
	\notag
	&=e^{-\frac{4bt_n^3}n} \sqrt{t_n} \Lambda_0( -\tfrac{2t_n^2}{n\sqrt{t_n}} )
	-
	4b^2(t_n)^{\frac52}
\\
	\label{eq:llllll}
	&\geq
	\sqrt{t_n} \Lambda_0( -\tfrac{2t_n^2}{n\sqrt{t_n}} ) - C(t_n)^{\frac52},
\end{align}
for some constant $ C<\infty $.
Now, set $ t=t_n $ in \eqref{eq:qd:ineq}
and combine the result with \eqref{eq:llllll}.
After dividing both sides of the result by $ t_n^2 $ and letting $ n\to\infty $,
we arrive at
\begin{align}
	\label{eq:rrrrr}
	\lim_{n\to\infty} 
	(t_n)^{-\frac32} \big( 
		\Lambda_0(\tfrac{w(t_n)}{\sqrt{t_n}})
		-\Lambda_0(-\tfrac{2}n (t_n)^\frac32 )
	\big)
	-
	\Lambda_3(0) \geq 0.
\end{align}
Using \eqref{eq:Lambda0:diff} to Taylor expand
the expressions $ \Lambda_0(\tfrac{w(t_n)}{\sqrt{t_n}}) $ 
and $ \Lambda_0(-\tfrac{2}n (t_n)^\frac32 ) $,
with\\
$ \liminf_{n\to\infty} \frac{w(t_n)}{t_n^2} = b $ (by~\eqref{eq:liminf:b}),
we conclude the desired lower bound $ b\geq \Lambda_3(0) =\frac{2}{\sqrt{\pi}} $.
\end{proof}

\section{Proof of Theorem~\ref{thm:hydro}}
\label{sect:hydro}
Equipped with the tools developed previously,
in this section prove Theorem~\ref{thm:hydro}.
To this end,
throughout this section we specialize $ (\phi^K_i(t):t\geq 0) $ 
to the push-the-laggard strategy~\eqref{eq:pushlaggard}.
Recalling that $ \tau_i $ denote the absorption of the $ i $-th particle $ X_i(t) $,
we let 
\begin{align}
	\label{eq:extt}
	\tau_\text{ext} := \max\nolimits_{i}{\tau_i},
	\quad
	\tau^K_\text{ext} := K^{-1}\tau_\text{ext}
\end{align}
denote the extinction times (unscaled and scaled).
Under the push-the-laggard strategy,
Proposition~\ref{prop:intXY}\ref{enu:intX} gives
\begin{align}
	\label{eq:int:abs:}
	\Uc_K(t,x) = 
	\Gc_K(t,x) + \int_{0}^{t\wedge\tau^K_\text{ext}} \pN(t-s,Z_K(s),x) ds + \rd_K(t,x),
\end{align}
We first establish a lower bound on the extinction time.

\begin{lemma}\label{lem:extT}
For any fixed $ T,n<\infty $, there exists $ C=C(T,n)<\infty $
such that
\begin{align}
	\label{eq:extT}
	\Pr( \tau^K_\text{ext} > T ) \geq 1 -CK^{-n}.
\end{align}
\end{lemma}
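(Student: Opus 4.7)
The plan is to apply the integral identity~\eqref{eq:int:abs:} at $x=0$ and derive a contradiction from the remainder bound~\eqref{eq:rd:bd}. The key observation is that at $x=0$ the Neumann heat kernel simplifies to $\pN(t-s,Z_K(s),0)=2p(t-s,Z_K(s))\geq 0$, so the integral term in~\eqref{eq:int:abs:} is pointwise non-negative.

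On the event $\{\tau^K_\text{ext}\leq T\}$ all particles are absorbed by scaled time $T$, hence $\Uc_K(T,0)=0$. Setting $t=T$, $x=0$ in \eqref{eq:int:abs:} and using the non-negativity above gives the deterministic bound $\rd_K(T,0)\leq-\Gc_K(T,0)$. I would then verify the lower bound
\begin{align*}
	\Gc_K(T,0)
	= \sqrt{K}\,\testf(T,\tfrac{1}{\sqrt{K}},0)
	= 2\sqrt{K}\int_0^{1/\sqrt{K}} p(T,y)\,dy
	\xrightarrow[K\to\infty]{} 2p(T,0)=\sqrt{\tfrac{2}{\pi T}}>0,
\end{align*}
whence there exists $K_0=K_0(T)<\infty$ such that $\Gc_K(T,0)\geq p(T,0)$ for all $K\geq K_0$. (The convergence is immediate from $|p(T,y)-p(T,0)|\leq C y$ on $[0,1/\sqrt{K}]$.)

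Next, I would fix any $\gamma\in(0,\tfrac14)$ and invoke Proposition~\ref{prop:intXY}\ref{enu:intX} to conclude that $|\rd_K(T,0)|\leq K^{-\gamma}T^{-3/4}$ holds with probability at least $1-CK^{-n}$. For $K$ large enough that $K^{-\gamma}T^{-3/4}<p(T,0)$, the deterministic implication above forces $\{\tau^K_\text{ext}\leq T\}$ to be contained in the complement of this high-probability event, so $\Pr(\tau^K_\text{ext}>T)\geq 1-CK^{-n}$. For the finite range of small $K$ where $K^{-\gamma}T^{-3/4}\geq p(T,0)$, one simply enlarges $C$ to absorb these cases via the trivial bound $\Pr(\tau^K_\text{ext}>T)\geq 0$.

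I do not anticipate any substantial obstacle: the argument is a short deterministic deduction from the integral identity combined with the already-established remainder estimate \eqref{eq:rd:bd}. The only delicate step is checking that the lower bound on $\Gc_K(T,0)$ is uniform in $K$ (for $K$ large), which follows directly from the explicit integral representation displayed above.
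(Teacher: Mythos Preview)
Your argument is correct, but it takes a genuinely different route from the paper's proof. The paper gives a much more elementary, self-contained argument: it couples $X$ to the drift-free absorbed system $(X^{\text{ab}}_i)_i$ (clearly $\tau_\text{ext}\geq\tau'_\text{ext}$), and then computes directly
\[
\Pr(\tau'_\text{ext}\leq KT)=\Big(\Pr\big(\inf_{t\leq KT}(B(t)+1)\leq 0\big)\Big)^K\leq \exp\!\big(-\tfrac{1}{C(T)}K^{1/2}\big),
\]
using only independence and the reflection principle. This yields an \emph{exponential} bound in $\sqrt{K}$, stronger than the polynomial $CK^{-n}$ that the lemma requires, and does not touch any of the Section~\ref{sect:Int} machinery. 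Your approach instead leverages the integral identity and the remainder estimate of Proposition~\ref{prop:intXY}\ref{enu:intX}; there is no circularity, since that proposition is proved entirely within Section~\ref{sect:Int} without reference to Lemma~\ref{lem:extT}. The trade-off is that your proof inherits only the polynomial rate from~\eqref{eq:rd:bd} and, as written via~\eqref{eq:int:abs:}, is phrased for the push-the-laggard strategy (though the same reasoning applied to the general identity~\eqref{eq:int:abs}, using $\phi^K_i\geq 0$ and $\pN(t-s,\cdot,0)\geq 0$, would cover any strategy). Either route suffices for the lemma as stated.
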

\begin{proof}
Consider the modified process $ (X^\text{ab}_i(t);t \geq 0)_{1\leq i\leq K} $
consisting of $ K $ independent Brownian motions starting at $ x=1 $ 
and absorbed once they reach $ x=0 $,
and let $ \tau'_\text{ext} := \inf \{ t : X_i(t)=0,\forall i \} $
denote the corresponding extinction time.
Under the natural coupling of $ (X^\text{ab}_i(t))_i $ and $ (X_i(t))_i $ 
(by letting them sharing the underlying Brownian motions),
we clearly have $ \tau_\text{ext} \geq \tau'_\text{ext} $.
For the latter, it is straightforward to verify that
\begin{align*}
	\Pr( \tau'_\text{ext} \leq KT ) 
	=
	\Big( \Pr\Big( \inf_{t\leq T} (B(Kt)+1) \leq 0 \Big) \Big)^K
	\leq 
	\exp(-\tfrac{1}{C(T)}K^{1/2}),
\end{align*}
where $ B(\Cdot) $ denotes a standard Brownian motion.
From this the desired result follows.
\end{proof}
\noindent
By Lemma~\ref{lem:extT}, toward the end of proving Theorem~\ref{thm:hydro},
without loss of generality we remove the localization 
$ \Cdot\wedge\tau^K_\text{ext} $ in \eqref{eq:int:abs:}.

Next, using the expression \eqref{eq:Gc:pN} of $ \Gc_K(t,x) $,
from the heat kernel estimate \eqref{eq:p:Holdx} we have
\begin{align}
	&
	\label{eq:GK:est}
	|\Gc_K(t,x) - 2p(t,x)| \leq	C(\alpha) K^{-\frac{\alpha}{2}} t^{-\frac{1+\alpha}{2}},&
	&
	\forall\alpha\in(0,1),
\\
	&
	\label{eq:pN:est}
	\int_0^t|\pN(t-s,x,z)-\pN(t-s,x,z')| ds
	\leq
	C(\alpha') |z-z'|^{\alpha'} t^{\frac{1-\alpha'}{2}},&
	&
	\forall \alpha' \in (0,1).
\end{align}
For any fixed $ \gamma\in (0,\frac14) $ and $ \alpha'\in(0,1) $,
taking the difference of \eqref{eq:Ucs} and \eqref{eq:int:abs:},
followed by using the estimates \eqref{eq:GK:est}--\eqref{eq:pN:est}
and \eqref{eq:rd:bd},
we obtain
\begin{align*}
	&|\Uc_K(t,x)-\Ucs(t,x)|
\\
	\leq&
	|\Gc_K(t,x)-2p(t,x)| 
	+ 
	\int_0^t|\pN(t-s,x,Z_K(t))-\pN(t-s,x,z')| ds + |\rd_K(t,x)|
\\
	\leq&
	C(\gamma) t^{-\frac34} K^{-\gamma}
	+ 
	C(\gamma,\alpha') \sup_{s\leq T}|Z_K(s)-z(s)|^{\alpha'},
	\quad
	\forall x\in\bbR, t\leq T,
\end{align*}
with probability $ \geq 1-C(n,T)K^{-n} $.
From this we see that the hydrodynamic limit~\eqref{eq:hydro:U} of $ \Uc_K(t,x) $
follows immediately from the hydrodynamic limit~\eqref{eq:hydro:Z} of $ Z_K $.
Focusing on proving \eqref{eq:hydro:Z} hereafter,
in the following we settle \eqref{eq:hydro:Z} 
in the absorption phase and the moving boundary phase separately.
For technical reasons, instead of using $ \t=\frac12 $ 
as the separation of these two phases,
in the following we use $ \frac{1}{2}+ \frac17 K^{-2\gamma} $ for the separation of the two phases,
where $ \gamma\in(0,\frac{1}{96}) $ is fixed.
More precisely, 
the desired hydrodynamic result~\eqref{eq:hydro:Z} follows immediately
from the following two propositions
(by setting $ \beta=\gamma $ in Part\ref{enu:Zhy:abs}):
\begin{proposition}
\label{prop:Zhy}
For any fixed $ \gamma<\gamma_1\in(0,\frac{1}{96}) $ and $ n<\infty $, 
there exists $ C=C(\gamma,\gamma_1,n)<\infty $ such that
\begin{enumerate}[label=(\alph*)]
\item \label{enu:Zhy:abs} for all $ \beta \leq 4\gamma_1 $ and $ K<\infty $,
\begin{align}
	\label{eq:Zhydro:abs:}
	\Pr \Big( 
			|Z_K(t)-\z(t)| \leq CK^{-\beta},
			\
			\forall t \in[0, \tfrac{1}{2}+ \tfrac17 K^{-2\beta}]
	\Big) 
		\geq 1 - CK^{-n};
\end{align}
\item \label{enu:Zhy:mvbdy}
for all $ K<\infty $,
\begin{align}
\label{eq:Zhy:mvbdy}
	\Pr \Big( 
			|Z_K(t)-\z(t)| \leq CK^{-\gamma},
			\
			\forall t\in[\tfrac12 + \tfrac17 K^{-2\gamma}, T] 
	\Big) 
		\geq 1 - CK^{-n}.
\end{align}
\end{enumerate}
\end{proposition}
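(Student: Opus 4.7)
The plan is to prove parts (a) and (b) separately, addressing the absorption and moving boundary phases with different techniques.

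For part (a), since $\z(t) \equiv 0$ for $t \leq \tfrac{1}{2}$ and Lemma~\ref{lem:zqd} gives $\z(t) \leq CK^{-4\beta}$ on $[\tfrac{1}{2}, \tfrac{1}{2} + \tfrac{1}{7} K^{-2\beta}]$, it suffices to show $Z_K(t) \leq CK^{-\beta}$ throughout this interval. I would argue by contradiction using the integral identity \eqref{eq:int:abs:}: if $Z_K(t) > CK^{-\beta}$, then by definition of the laggard the interval $(0, Z_K(t))$ contains no living particles, forcing $\Uc_K(t, 0) - \Uc_K(t, Z_K(t)) = O(K^{-1/2})$. Evaluating \eqref{eq:int:abs:} at $x = 0$ and $x = Z_K(t)$ and subtracting yields
\[
O(K^{-1/2}) = \big( \Gc_K(t, 0) - \Gc_K(t, Z_K(t)) \big) + \int_0^t \big( \pN(t-s, Z_K(s), 0) - \pN(t-s, Z_K(s), Z_K(t)) \big) ds + \rd_K(t, 0) - \rd_K(t, Z_K(t)).
\]
The first two right-hand terms are strictly positive and admit a lower bound linear in $Z_K(t)$ (stemming from the hydrodynamic boundary density $u_1(t,0) = 2$), while $|\rd_K|$ is $O(K^{-\gamma_1} t^{-3/4})$ via \eqref{eq:rd:bd}. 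The bound $Z_K(t) \leq CK^{-\beta}$ for $\beta \leq 4\gamma_1$ then follows, first pointwise in $t$ and then uniformly via a continuity argument along a fine time mesh.

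For part (b), the plan is to derive a perturbed version of the integral equation \eqref{eq:zeq} for $Z_K$ on $[t_0, T]$, where $t_0 := \tfrac{1}{2} + \tfrac{1}{7} K^{-2\gamma}$, and then apply the stability estimate Lemma~\ref{lem:pStef}. Since (with high probability) no absorption occurs past $t_0$, the value $\Uc_K(t, Z_K(t))$ is essentially constant in $t$. Evaluating \eqref{eq:int:abs:} at $x = Z_K(t)$ and following the semigroup manipulations of Lemma~\ref{lem:Us}, I would rewrite the identity as
\[
\int_0^\infty p(t - t_0, Z_K(t) - y) \big( \Uc_K(t_0, 0) - \Uc_K(t_0, y) \big) dy = \int_{t_0}^t p(t-s, Z_K(t) - Z_K(s)) ds + g_K(t),
\]
where $g_K$ collects all error terms. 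Comparing to the analogous equation for $\z$ (shifted to initial time $t_0$), bounding the discrepancy $\Uc_K(t_0, \cdot) - \Ucs(\tfrac{1}{2}, \cdot)$ via an auxiliary hydrodynamic estimate built from \eqref{eq:int:abs:} together with part (a), and invoking Lemma~\ref{lem:pStef} would then yield $\sup_{t \in [t_0, T]} |Z_K(t) - \z(t)| \leq CK^{-\gamma}$.

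The main obstacle is controlling the seminorm $|Z_K|'_{[t_0, T]}$ required by Lemma~\ref{lem:pStef}. While $\z$ is nondecreasing, $Z_K$ can jump downward when the identity of the laggard changes at an overtaking event. Such jumps are bounded by the maximum spacing between consecutive particles near $Z_K$, which I would control via Lemma~\ref{lem:em}\ref{enu:em} combined with a union bound over a fine time mesh. A secondary difficulty is verifying the smallness hypothesis of Lemma~\ref{lem:pStef}, which demands that the total perturbation $|Z_K(t_0) - \z(t_0)| + \|g_K\|_\infty + |Z_K|'_{[t_0,T]}$ be bounded by the absolute constant $1/C_1$; this is ensured by taking $K$ sufficiently large.
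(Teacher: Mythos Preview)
Your outline for part~(a) has the right starting identity but underestimates the precision required. The claim that the difference $\big(\Gc_K(t,0)-\Gc_K(t,Z_K(t))\big)+\int_0^t(\ldots)\,ds$ is ``strictly positive with a lower bound linear in $Z_K(t)$'' is not what happens: writing $a=Z_K(\tau)/\sqrt{\tau}\approx K^{-\beta}$, the paper's computation shows that the $\Gc_K$ difference contributes $\tfrac12 a^2-\tfrac18 a^4+\ldots$ while the integral term contributes $\tau(a^2-\tfrac{7}{12}a^4)+\ldots$, and the resulting inequality at \emph{quadratic} order only yields $\tau\ge\tfrac12$. The overshoot $\tfrac17 K^{-2\beta}$ comes entirely from the quartic terms, so a linear (or even quadratic) bound cannot reach $t>\tfrac12$. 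You are also missing a separate short-time argument for $t\le K^{-2\alpha}$ (the paper's Lemma~\ref{lem:smtime}, via confinement probabilities for Brownian motion in a strip), needed because the remainder bound $|\rd_K|\le K^{-\gamma_1}t^{-3/4}$ blows up at $t=0$.

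For part~(b) there is a more serious structural gap. You assume ``(with high probability) no absorption occurs past $t_0$'' and then control $|Z_K|'_{[t_0,T]}$, but these two facts are exactly what has to be proven, and they are coupled: without a lower bound on $Z_K$ you cannot rule out absorption, and with possible absorption $Z_K$ can drop to $0$. The paper breaks this circularity by \emph{not} working with $Z_K$ directly. Instead it constructs two auxiliary Atlas models $\Ylw,\Yup$ (no absorption at all) with initial laws slightly below and above $\U_K(t_0,\cdot)$, couples them to $X$ so that $\Wlw_K\le Z_K\le\Wup_K$ (Lemma~\ref{lem:couple}), and then proves the hydrodynamic limit for $\Wlw_K,\Wup_K$ via Lemma~\ref{lem:pStef}. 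The seminorm $|\W_K|'_{[0,T]}$ is controlled (Lemma~\ref{lem:Znondecr}) by comparing to the \emph{infinite} Atlas model in its stationary gap distribution and invoking a fluctuation bound from~\cite{dembo15}; this is where the Atlas structure is essential. Your proposed mechanism for $|Z_K|'$ is also off: $Z_K$ is continuous at overtaking events (the minimum of continuous paths is continuous), and its decrease is driven by the Brownian noise of the laggard, not by jumps---so Lemma~\ref{lem:em}\ref{enu:em} on empirical spacings does not address it.
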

\noindent
We settle Proposition~\ref{prop:Zhy}\ref{enu:Zhy:abs}--\ref{enu:Zhy:mvbdy}
in Sections~\ref{sect:absrb}--\ref{sect:mvbdy} in the following,
respectively.
To this end, we fix $ \gamma<\gamma_1\in(0,\frac{1}{96}) $, 
$ n<\infty $ and $ T<\infty $,
and, to simplify notations,
 use $ C<\infty $ to denote a generic constant that depends only on $ \gamma,\gamma_1,n,T $.
\subsection{Proof of Proposition~\ref{prop:Zhy}\ref{enu:Zhy:abs}}
\label{sect:absrb}
Fix $ \beta \leq 4\gamma_1 $.
We begin with a reduction. 
Since $ \z(t)|_{t\leq\frac12}=0 $, by Lemma~\ref{lem:zqd},
we have $ \sup_{t \leq \frac{1}{2}+ \frac17 K^{-2\beta}} |\z(t)| \leq CK^{-4\beta} \leq CK^{-\beta} $.
From this, we see that is suffices to prove
\begin{align}
\label{eq:Zhydro:abs}
	\Pr \Big( 
			Z_K(t) \leq K^{-\beta},
			\
			\forall t \leq \tfrac{1}{2}+ \tfrac17 K^{-2\beta}
	\Big) 
		\geq 1 - CK^{-n}.
\end{align}

To the end of showing \eqref{eq:Zhydro:abs},
we recall the following classical result from \cite{feller71}.
\begin{lemma}[{\cite[Chapter X.5, Example (c)]{feller71}}] \label{Felconf}
Let $ (B(t); t \geq 0) $ be a standard Brownian motion (starting from $ 0 $), and let $ 0<a<b<\infty $. 
Defining $ \rho(t,a,b) := \Pr ( 0< B(s)+a < b,\forall s \leq t) $, we have
\begin{equation}\label{eigenexp}
	\rho(t,a,b) 
	= 
	\sum_{n=0}^{\infty} 
	\frac{4}{(2n+1) \pi} 
	\sin \Big( \frac{(2n+1) \pi a}{b} \Big) 
	\exp \Big( -\frac{(2n+1)^2 \pi^2}{2b^2}t \Big).
\end{equation}
\end{lemma}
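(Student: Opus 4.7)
This is a classical result that Feller attributes to the spectral decomposition of the Dirichlet Laplacian on the interval $(0,b)$; the intended ``proof'' in the paper is doubtless just a citation. For completeness I will sketch the standard derivation.

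First, I would rewrite the quantity of interest as the total mass of a killed sub-density. Defining $q(t,a,y)\,dy := \Pr\bigl(B(t)+a \in dy,\ 0 < B(s)+a < b \ \forall\,s \leq t\bigr)$, one has $\rho(t,a,b) = \int_0^b q(t,a,y)\,dy$. A standard Feynman--Kac or martingale argument shows that $q(t,a,y)$ solves the heat equation $\partial_t q = \tfrac12\partial_{yy}q$ on $(0,\infty)\times(0,b)$ with absorbing (Dirichlet) boundary conditions $q(t,a,0)=q(t,a,b)=0$ and initial condition $q(0,a,\cdot) = \delta_a(\cdot)$.

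Next, I would diagonalize $-\tfrac12\partial_{yy}$ on $L^2((0,b))$ with zero boundary data. The orthonormal Dirichlet eigenbasis is $\phi_n(y) = \sqrt{2/b}\,\sin(n\pi y/b)$, $n\geq 1$, with eigenvalues $\lambda_n = n^2\pi^2/(2b^2)$. Expanding the initial data $\delta_a$ in this basis and propagating by the heat semigroup yields
$q(t,a,y) = \sum_{n=1}^\infty \tfrac{2}{b}\sin\!\bigl(\tfrac{n\pi a}{b}\bigr)\sin\!\bigl(\tfrac{n\pi y}{b}\bigr)\exp\!\bigl(-\tfrac{n^2\pi^2 t}{2b^2}\bigr)$,
with the series converging absolutely and uniformly in $y$ for any fixed $t>0$, thanks to the Gaussian-like decay of the exponential factors.

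Finally, I would integrate termwise in $y\in(0,b)$. Since $\int_0^b \sin(n\pi y/b)\,dy$ equals $2b/(n\pi)$ when $n$ is odd and vanishes when $n$ is even, only the odd terms survive; reindexing $n = 2k+1$ with $k\geq 0$ produces exactly the series displayed in~\eqref{eigenexp}. The only mildly technical point is justifying the termwise integration, which is immediate from the absolute summability just noted; I do not anticipate any real obstacle in this argument.
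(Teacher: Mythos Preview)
Your reading is correct: the paper does not prove this lemma at all but simply cites Feller, so your sketch goes beyond what the paper supplies. The argument you give---identifying $\rho(t,a,b)$ as the total mass of the Dirichlet heat kernel on $(0,b)$, expanding in the sine eigenbasis, and integrating termwise so that only odd modes survive---is the standard derivation and is carried out correctly.
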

\noindent
With $ \beta\leq4\gamma_1<\frac{1}{24} $,
we have $ 4\beta<\frac12 - 8\beta $.
Fixing $ \alpha \in (4\beta, \frac12 - 8\beta) \subset (0,\frac12) $,
we begin with a short-time estimate:
\begin{lemma} \label{lem:smtime}
There exists $ C<\infty $ such that 
\begin{align}
	\label{eq:sm}
	\Pr \Big(  Z_K(t) \leq K^{-\alpha}, \ \forall t \leq K^{-2\alpha} \Big) 
	\geq 1-C K^{-n}.
\end{align}
\end{lemma}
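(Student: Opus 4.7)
The plan is to couple the drifted system with an auxiliary drift-free system $\tilde X_i(t) := 1 + B_i(t\wedge\tilde\tau_i)$, with $\tilde\tau_i := \inf\{t: 1+B_i(t)=0\}$, driven by the \emph{same} Brownian motions as $X_i$. Since $\phi^K_i\geq 0$, before either absorption time one has $X_i^K(t)\geq\tilde X_i^K(t)$, and in fact the pathwise identity
\begin{align*}
    X^K_i(t) = \tilde X^K_i(t) + \sqrt K\,T_i(t),
    \qquad
    T_i(t) := \int_0^t \phi^K_i(s)\,ds,
\end{align*}
holds as long as $\tilde X^K_i$ has not been absorbed. Confinement of $\tilde X_i^K$ in a small strip above $0$ thus guarantees survival of $X_i^K$ via $X_i\geq\tilde X_i$, while an upper bound on $T_i$ gives an upper bound on $X_i^K$. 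Two global facts about $(T_i)$ will be used: each $T_i$ is nondecreasing in $t$, and $\sum_i T_i(t)\leq t$, inherited from $\sum_i\phi^K_i\leq 1$.

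The first key step is to apply Lemma~\ref{Felconf} to lower bound the probability that a single $\tilde X^K_i$ stays inside $(0,\tfrac12 K^{-\alpha})$ throughout $[0,K^{-2\alpha}]$. In the unscaled variables this asks for a BM starting at $a=1$ to remain in $(0,b)$ with $b=\tfrac12 K^{1/2-\alpha}$ up to time $K^{1-2\alpha}$; the ratio $t/b^2$ equals the order-one constant $4$, so the series \eqref{eigenexp} is dominated by its first term and yields a lower bound $\geq c K^{-(1/2-\alpha)}$ for some $c=c(\alpha)>0$. Since the $B_i$ are independent, the count $N^{*}$ of confined indices is a binomial with mean at least $cK^{1/2+\alpha}$, and a standard Chernoff inequality gives $\Pr(N^{*}\geq \tfrac12 cK^{1/2+\alpha})\geq 1 - \exp(-c'K^{1/2+\alpha})$, far stronger than the required $1-CK^{-n}$.

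The closing step is a drift-budget pigeonhole. From $\sum_i T_i(K^{-2\alpha})\leq K^{-2\alpha}$, at most $2K^{1/2-\alpha}$ indices $i$ can satisfy $T_i(K^{-2\alpha})>\tfrac12 K^{-\alpha-1/2}$. Because $\alpha>0$, for $K$ large we have $\tfrac12 cK^{1/2+\alpha}>2K^{1/2-\alpha}$, so on the high-probability event from the previous paragraph one can pick an index $i$ that is \emph{simultaneously} $\tilde X$-confined and satisfies $T_i(K^{-2\alpha})\leq \tfrac12 K^{-\alpha-1/2}$. For such $i$, the monotonicity of $T_i$ together with the additive identity yields
\begin{align*}
    0 < X^K_i(t) \leq \tfrac12 K^{-\alpha} + \sqrt K \cdot \tfrac12 K^{-\alpha-1/2} = K^{-\alpha},
    \qquad \forall t\leq K^{-2\alpha},
\end{align*}
and hence $Z_K(t)\leq X^K_i(t)\leq K^{-\alpha}$, yielding~\eqref{eq:sm}. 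The principal obstacle is that neither natural one-sided coupling (drift off, giving survival but no upper bound, or full drift, giving an upper bound but failing to keep particles below $K^{-\alpha}$ since $K^{1/2-2\alpha}\gg K^{-\alpha}$) alone controls both survival and upper position; the argument succeeds by threading both through the additive decomposition and invoking the global drift constraint $\sum_i\phi^K_i\leq 1$ to ensure that at least one particle meets both requirements.
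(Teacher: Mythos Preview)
Your proof is correct and follows essentially the same route as the paper's: both apply Lemma~\ref{Felconf} with $a=1$, $b=\tfrac12 K^{1/2-\alpha}$, $t=K^{1-2\alpha}$ to show that $\sim K^{1/2+\alpha}$ drift-free particles stay confined in $(0,\tfrac12 K^{-\alpha})$, then use the global constraint $\sum_i\phi_i\leq 1$ as a drift budget. The only cosmetic difference is that the paper argues by contradiction (if $Z_K(t)>K^{-\alpha}$ at some $t$, then \emph{every} confined particle would need scaled drift $\geq\tfrac12 K^{-\alpha}$, totaling $\gtrsim K^{1/2}$, which exceeds the available $K^{1/2-2\alpha}$), whereas you phrase the same counting as a pigeonhole to exhibit a single particle that is both confined and has small accumulated drift for all $t\leq K^{-2\alpha}$; these are contrapositives of one another.
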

\begin{proof}
We consider first the modified process 
$ (\widehat{X}^{\text{ab}}_i(t); t \geq 0)_{i=1}^K $,
which consists of $ K $ independent Brownian motions starting at $ x=1 $, 
and absorbed at $ x=0 $ and $ x=\frac{1}{2} K^{\frac12-\alpha} $. 
Let $ \hat{X}^{K,\text{ab}}_i(t) := \frac{1}{\sqrt{K}} \hat{X}^{\text{ab}}_i(Kt) $
denote the scaled process, and consider 
\begin{align}
	\label{eq:hatN}
	\widehat{N}^\text{ab} 
	:= 
	\# \set{ i : 0<\widehat{X}^{K,\text{ab}}_i(t)<\tfrac12 K^{-\alpha}, \ \forall t \leq K^{-2\alpha} },
\end{align}
the number of surviving $ \widehat{X}^{K,\text{ab}} $-particles of up to time $ K^{-2\alpha} $.
Let
\begin{align*}
	\rho^*_K := \rho(K^{-2\alpha},K^{-\frac12},\tfrac12 K^{-\alpha})
	:=
	\Pr\big( 0<\tfrac{1}{\sqrt{K}}(B(Kt)+1)<\tfrac12 K^{-\alpha}, \ \forall t \leq K^{-2\alpha} \big).
\end{align*}
From the definition~\eqref{eq:hatN}
we see that $ \widehat{N}^\text{ab} $ 
is the sum of i.i.d.\ Bernoulli$(\rho^*_K)$ random variables.
Hence, by the Chernov bound we have
\begin{equation}\label{Chernoff}
	\Pr (\widehat{N}^\text{ab} \geq \tfrac{1}{2}K \rho^*_K )
	\geq  
	1- \exp \big( -\tfrac{1}{8}K \rho^*_K \big).
\end{equation}
Specialize \eqref{eigenexp} at 
$ (t,a,b) = (K^{-2\alpha}, K^{-\frac12}, \frac12 K^{-\alpha}) $ to obtain
\begin{align*}
	\rho^*_K 
	= 
	\sum_{n=0}^{\infty} \rho'_{K,n}
	\exp \big( -2(2n+1)^2 \pi^2 \big),
	\
	\text{ where }
	\rho'_{K,n} := \frac{4}{(2n+1) \pi} \sin ( 2(2n+1)\pi K^{\alpha-\frac12} ).
\end{align*}
With $ \alpha<\frac12 $, we have
$ \lim_{K\to\infty} (K^{\frac12-\alpha}\rho'_{K,n}) = 8 $ and $ |\rho'_{K,n}| \leq 8 K^{\alpha-\frac12} $,
and it is straightforward to show that
\begin{align*}
	\lim_{K\to\infty} (K^{\frac12-\alpha}\rho^*_K)
	= 
	8 \sum_{n=0}^{\infty} \exp \big( -2(2n+1)^2 \pi^2 \big) > 0.
\end{align*}
Consequently, $ \rho^*_K \geq \frac{1}{C} K^{\alpha-\frac{1}{2}} $.
Inserting this into \eqref{Chernoff}, we arrive at
\begin{align*}
	\Pr (\widehat{N}^\text{ab} \geq \tfrac{1}{C}K^{\alpha+\frac{1}{2}} ) 
	\geq  
	1- \exp(-\tfrac{1}{C} K^{\alpha+\frac{1}{2}} )
	\geq
	1 - CK^{-n}.
\end{align*}

Next, we consider the process $ (X^{\text{ab}}_i(t);t\geq 0)_{i=1}^K $,
consisting of $ K $ independent Brownian motions starting at 
$ x= 1 $ and absorbed only at $ x=0 $,
coupled to $ (\widehat{X}^{\text{ab}}_i(t))_i $ by the natural coupling 
that each $ i $-th particle share the same underlying driving Brownian motion.
Let 
$ X^{\text{ab},K}_i(t) := \frac{1}{\sqrt{K}} X^{\text{ab}}_i(Kt) $
denote the scaled process, let
$ 
	\Gamma := \{X^{\text{ab},K}_i(K^{-2\alpha}) : 
	0<X^{\text{ab},K}_i(t)<\frac12 K^{-\alpha}, \forall t \leq K^{-2\alpha}\} 
$
denote the set of all $ X^{\text{ab},K} $-particles 
that stay within $ (0,\frac12 K^{-\alpha}) $
for all $ t \leq K^{-2\alpha} $, 
and let $ N^{\text{ab}} := \#\Gamma $.
We clearly have $ N^{\text{ab}} \geq \widehat{N}^\text{ab} $, and therefore
\begin{equation} \label{compfZ}
	\Pr (N^{\text{ab}}(K^{-2\alpha}) \geq \tfrac{1}{C} K^{\alpha+\frac{1}{2}}) \geq 1 - C K^{-n}.
\end{equation}

Now, couple $ (X^{\text{ab},K}(t)) $ and $ (X^K(t)) $ 
by the aforementioned natural coupling.
On the event $\{N^{\text{ab}} \geq \frac{1}{C} K^{\alpha+\frac{1}{2}}\}$, 
to move all $ X^K $-particles in $ \Gamma $ to the level $ x=K^{-\alpha} $
requires at least a drift of 
$
	N^{\text{ab}} (\frac{1}{2}K^{-\alpha}) \geq \tfrac{1}{C} K^{\frac{1}{2}},
$
while the total amount of (scaled) drift at disposal is $ K^{-2\alpha+\frac{1}{2}} $.
This is less than $ \tfrac{1}{C} K^{\frac{1}{2}} $ for all large enough $ K $.
Consequently, the desired result \eqref{eq:sm} follows from \eqref{compfZ}.
\end{proof}

Equipped with the short-time estimate~\eqref{eq:sm},
we now return to showing \eqref{eq:Zhydro:abs}.
Consider the threshold function
\begin{align}
	z^*(t) =  K^{-\alpha} \ind_{\{t\leq K^{-2\alpha}\}} 
	+ (\sqrt{t} K^{-\beta}) \ind_{\{t>K^{-2\alpha}\}},
\end{align}
and the corresponding hitting time 
$ \tau:=\inf\{t \in\bbR_+ : Z_K(t) \geq z^*(t) \} $.
It suffices to show $ \Pr( \tau > \frac{1}{2} + \frac{1}{7} K^{-2\beta} ) \geq 1 - CK^{-n} $.
To this end, by Lemma~\ref{lem:smtime}, without loss of generality we assume
$ \tau \in (K^{-2\alpha},1) $.
As the trajectory of $Z_{K}$ is continuous except when it hits $ 0 $, we have $Z_K(\tau) \geq z^{*}(\tau)$.
Hence at time $ \tau $, no particle exists between $ 0 $ and $ z^*(\tau) $, 
or equivalently $ \Uc_K(\tau,z^{*}(\tau))=\Uc_K(\tau,0). $
With this, taking the difference of \eqref{eq:int:abs:} at $ x=z^*(\tau) $ and at $ x=0 $,
and multiplying the result by $ \sqrt{ \frac{\pi \tau}{2} } $, we obtain
\begin{align*}
	h_1 = h_2 + \sqrt{ \tfrac{\pi \tau}{2} }(\rd_K(\tau,z^{*}(\tau)) - \rd_K(\tau,0)),
\end{align*}
where $ h_1 := \sqrt{ \frac{\pi \tau}{2} }( \Gc_K(\tau,0)-\Gc_K(\tau,z^*(\tau))) $,
$ h_2 := \sqrt{ \frac{\pi \tau}{2} } \int_0^{\tau} f_2(s,Z_K(s),z^*(\tau)) ds $, and
\begin{align}\label{eq:h2}
	f_2(s,z,z') := \pN(\tau-s,z,z')-\pN(\tau-s,z,0).
\end{align}
Further using \eqref{eq:rd:bd},
for fixed $ \delta\in (0,\frac{1-2\alpha}{4} - 4\beta) $,
to control the remainder term $ (\rd_K(\tau,z^{*}) - \rd_K(\tau,0)) $,
with $ \tau \geq K^{-2\alpha} $, we have 
\begin{align}
	\label{eq:lrc}
	h_1 \leq h_2 + C K^{ \frac{2\alpha-1}{4}+\delta },
\end{align}
with probability $ 1 -CK^{-n} $.
Given the inequality~\eqref{eq:lrc},
the strategy of the proof is to extract the bound
$ \tau \geq \frac12 + \frac{1}{7} K^{-2\beta} $ from \eqref{eq:lrc}.
To this end,
we next derive a lower bound on $ h_1 $ and an upper bound on $ h_2 $.

With $ \Gc_K(t,x) $ defined as in \eqref{eq:Gc}, we have
\begin{align*}
	h_1 = h_1(K^{-\beta}),
	\
	\text{ where }
	h_1(a)
	= 
	\sqrt{K\tau}
	\int_0^\frac{1}{\sqrt{K\tau}} 
	\big( e^{-\frac{y^2}{2}} - \tfrac12 e^{-\frac{(y+a)^2}{2}} - \tfrac12 e^{-\frac{(y-a)^2}{2}} \big) dy.
\end{align*}
Taylor-expanding $ h_1(a) $ to the fifth order gives
$
	h_1(a) \geq  a^2 h_{12} + a^4 h_{14} -  Ca^6,
$
where 
$ h_{12} :=	\sqrt{K\tau} \int_0^\frac{1}{\sqrt{K\tau}} e^{-y^2/2}(\frac12-\frac12 y^2) dy $
and $ h_{14} := \sqrt{K\tau}	\int_0^\frac{1}{\sqrt{K\tau}} e^{-y^2/2}(-\frac{1}{8}+\frac{1}{4}y^2-\frac{1}{24}y^4) dy $.
Further Taylor-expanding $ h_{12} $ and $ h_{14} $ in $ \frac{1}{\sqrt{K\tau}} $ yields
$ h_{12} \geq \frac12 - \frac{C}{K\tau} $ and $ h_{14} \geq -\frac{1}{8} - \frac{C}{K\tau} $,
and therefore
\begin{align}\label{eq:h1bd}
	h_1 
	\geq 
	\tfrac12 a^2 - \tfrac{1}{8} a^4 - C a^2 ( \tfrac{1}{K\tau} + a^4)
	\geq 
	\tfrac12 a^2 - \tfrac{1}{8} a^4 - C a^2 ( K^{2\alpha-1} + a^4),	
	\text{ for }
	a= K^{-\beta}.
\end{align}

Turning to estimating $ h_2 $,
we first observe that the function $ f_2(s,z,z') $ 
as in \eqref{eq:h2} increases in $ z $, $ \forall z \leq z' $,
as is readily verified by taking derivative as follows:
\begin{align*}
	&
	\sqrt{2\pi(\tau-s)^3} \partial_z f_2(s,z,z') 
\\
	=& z' (e^{-(z-z')^2/2}-e^{-(z+z')^2/2}) - z (e^{-(z-z')^2/2}+e^{-(z+z')^2/2}) + 2z e^{-z^2/2}
\\
	\geq&
	z (e^{-(z-z')^2/2}-e^{-(z+z')^2/2}) - z (e^{-(z-z')^2/2}+e^{-(z+z')^2/2}) + 2z e^{-z^2/2} \geq 0.
\end{align*}
Now, since $ t\mapsto z^*(t) $ is increasing for all $ t \geq K^{-2\alpha} $,
to obtain an upper bound on $ h_2 $ we replace $ Z_K(s) $ with $ z^*(\tau) $ for $ s \geq K^{-2\alpha} $.
Further, with $ \int_0^{K^{-2\alpha}} \pN(\tau-s,z,z') ds \leq C K^{-\alpha} $,
we obtain $ h_2 \leq C K^{-\alpha} + \sqrt{ \frac{\pi \tau}{2} } \int_0^{\tau} f_2(s,z^*(\tau),z^*(\tau)) ds $.
With $ z^*(\tau) = K^{-\beta} \sqrt{\tau} $,
the last integral is evaluated explicitly by using \eqref{eq:pId},
yielding
\begin{align*}
	h_{2} &\leq
	\tau h_2(K^{-\beta}) + CK^{-\alpha},
	\quad
	\text{ where }
	h_2(a)
	= 1 + e^{-2a^2}-2e^{-a^2/2}
		+ 2 a \int_{a}^{2a} e^{-y^2/2} dy.
\end{align*}
Taylor-expanding $ h_2(a) $ to the fifth order,
we further obtain 
\begin{align}\label{eq:h2bd}
	h_{2} \leq \tau (a^2 - \tfrac{7}{12} a^4 + Ca^6) + C K^{-\alpha},
	\quad
	\text{ for } a= K^{-\beta}.
\end{align}

Now, combining \eqref{eq:lrc}--\eqref{eq:h2bd},
we arrive at
\begin{align}
	\label{eq:t*bd}
	\tau \geq 
	\frac{
			\frac12 - \tfrac{1}{8} a^2 - C ( K^{2\alpha-1} + a^4
			+a^{-2}K^{-\alpha} + a^{-2}K^{\frac{2\alpha-1}{4}+\delta})
		}{
		1 - \tfrac{7}{12} a^2 + Ca^4
		},
	\
	\text{ for } a=K^{-\beta}.
\end{align}
With $ \alpha $ and $ \delta $ chosen as in the preceding,
it is now straightforward to check that, for $ a=K^{-\beta} $,
\begin{align*}
	&\frac{
			\frac12 - \tfrac{1}{8} a^2 - C ( K^{2\alpha-1} + a^4
			+a^{-2}K^{-\alpha} + a^{-2}K^{\frac{2\alpha-1}{4}+\delta})
		}{
		1 - \tfrac{7}{12} a^2 + Ca^4
		}
\\
	=&
	\frac{ \frac12 - \tfrac{1}{8} a^2 }{ 1 - \tfrac{7}{12} a^2 } + (\text{ higher order terms })
\\
	=&
	\tfrac12 + \tfrac{1}{6} K^{-2\gamma} + (\text{ higher order terms }).
\end{align*}
From this the we conclude the desired result: $ \tau>\frac12 + \frac{1}{7} K^{-2\gamma} $,
with probability $ \geq 1-CK^{-n} $.

\subsection{Proof of Proposition~\ref{prop:Zhy}\ref{enu:Zhy:mvbdy}}
\label{sect:mvbdy}
To simplify notations, 
we let $ \sigma_K := \frac12+\frac17 K^{-2\gamma} $.
Define the scaled distribution function of surviving $ X $-particles as
\begin{align}
	\label{eq:U}
	\U_K(t,x) := \tfrac{1}{\sqrt{K}} \# \{ 0<X^K_i(t) \leq x \}
	=
	\langle \emK_t, \ind_{(0,x]} \rangle,
\end{align}
and, to simplify notations, we let
\begin{align}
	\label{eq:Uss}
	\Uss(x) := \Ucs(\tfrac12,0) - \Ucs(\tfrac12,x) = \int_0^x u_1(\tfrac12,y) dy,
\end{align}
where $ u_1(t,y) $ is defined in \eqref{eq:u1}.
Recall that $ \gamma<\gamma_1\in(0,\frac{1}{96}) $ are fixed.
Fix furthering $ \gamma_3<\gamma_2 \in (\gamma,\gamma_1) $,
we begin with an estimate on $ \U_K(t,x) $:

\begin{lemma}\label{lem:Ubd}
There exists $ C<\infty $ such that
\begin{align}
	\label{eq:Ubd12}
	\Pr\Big( 
		|\U_K(\tfrac12,x)-\Uss(x)| \leq CK^{-4\gamma_2},
		\
		\forall x\in\bbR
	\Big) 
	\geq 1 - CK^{-n},
\\
	\label{eq:Ubd}
	\Pr\Big( 
		|\U_K(\sigma_K,x)-\Uss(x)| \leq K^{-\gamma},
		\
		\forall x\in\bbR
	\Big) 
	\geq 1 - CK^{-n}.
\end{align}
\end{lemma}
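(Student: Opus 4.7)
For Part~(a), I would apply the integral identity \eqref{eq:int:abs:} at $t = \tfrac12$ (the cutoff $\tau^K_\text{ext}$ is removed via Lemma~\ref{lem:extT}) and compare term-by-term with the hydrodynamic expression $\Ucs(\tfrac12,x) = 2p(\tfrac12,x) + \int_0^{1/2} \pN(\tfrac12 - s, 0, x)\,ds$, which is \eqref{eq:Ucs:abs} together with $\z \equiv 0$ on $[0,\tfrac12]$. Three discrepancies arise: the Gaussian gap $|\Gc_K(\tfrac12,x) - 2p(\tfrac12,x)|$, controlled by \eqref{eq:GK:est}; the source-term gap $\int_0^{1/2} |\pN(\tfrac12 - s, Z_K(s), x) - \pN(\tfrac12 - s, 0, x)|\,ds$, controlled by the spatial H\"older estimate \eqref{eq:p:Holdx} together with the absorption-phase estimate $|Z_K(s)| \leq CK^{-4\gamma_1}$ for $s \leq \tfrac12$ (Proposition~\ref{prop:Zhy}\ref{enu:Zhy:abs} with $\beta = 4\gamma_1$); and the martingale-type remainder $|\rd_K(\tfrac12,x)|$, controlled by Proposition~\ref{prop:intXY}\ref{enu:intX}. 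Because $4\gamma_2 < 4\gamma_1 < 1/24 < 1/4$, the relevant H\"older and remainder exponents can be chosen close enough to their endpoints that each discrepancy is $\leq CK^{-4\gamma_2}$. Subtracting at $x=0$ and at general $x$ yields \eqref{eq:Ubd12}.

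For Part~(b), I would propagate the time-$\tfrac12$ bound forward via the Neumann semigroup identity $\int_0^\infty \pN(t_1, z, w) \pN(t_2, w, y)\,dw = \pN(t_1+t_2, z, y)$. Combined with $\Gc_K(t, x) = \int_0^\infty \pN(t - \tfrac12, y, x)\,\Gc_K(\tfrac12, y)\,dy$ (which follows from \eqref{eq:Gc:pN}) and with \eqref{eq:int:abs:} at $t=\tfrac12$ (used to rewrite $\int_0^{1/2} \pN(\tfrac12-s, Z_K(s), y)\,ds$ in terms of $\Uc_K(\tfrac12, y)$), this recasts the integral identity for $t \geq \tfrac12$ as
\[
\Uc_K(t,x) = \int_0^\infty \pN(t-\tfrac12, y, x)\,\Uc_K(\tfrac12, y)\,dy + \int_{1/2}^{t} \pN(t-s, Z_K(s), x)\,ds + \widetilde{R}_K(t,x),
\]
with $\widetilde{R}_K(t,x) := \rd_K(t,x) - \int_0^\infty \pN(t-\tfrac12, y, x)\,\rd_K(\tfrac12, y)\,dy$; an identical identity (with no error) holds for $(\Ucs, \z)$. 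At $t = \sigma_K$, the three resulting contributions are bounded as follows: the initial-data integral is $\leq CK^{-4\gamma_2}$ by Part~(a) and $\int_0^\infty \pN(\sigma_K - \tfrac12, y, x)\,dy = 1$; the source-term integral is $\leq C|Z_K - \z|^\alpha \int_{1/2}^{\sigma_K} (\sigma_K - s)^{-(1+\alpha)/2}\,ds = CK^{-\gamma}$, using Proposition~\ref{prop:Zhy}\ref{enu:Zhy:abs} with $\beta = \gamma$ (valid on $[0, \sigma_K]$) and $\sigma_K - \tfrac12 = \tfrac17 K^{-2\gamma}$; and $|\widetilde{R}_K(\sigma_K, \cdot)| \leq CK^{-\gamma'}$ for some $\gamma' > \gamma$. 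A parallel short-time estimate $|\Ucs(\sigma_K, y) - \Ucs(\tfrac12, y)| = O(K^{-\gamma})$ (heat evolution of the smooth function $\Ucs(\tfrac12, \cdot)$ from \eqref{eq:u1} over time $\sigma_K - \tfrac12$, plus the source bound $\int_{1/2}^{\sigma_K}\pN(\sigma_K - s, \z(s), y)\,ds \leq C\sqrt{\sigma_K - \tfrac12}$) and subtraction at $y = 0$ and $y = x$ then produce \eqref{eq:Ubd}.

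The principal obstacle is the source-term bound in Part~(b): the window $[\tfrac12, \sigma_K]$ has length precisely $\tfrac17 K^{-2\gamma}$, matching the square of the available spatial estimate $|Z_K - \z| = O(K^{-\gamma})$, so the Neumann kernel's integrable $(\sigma_K - s)^{-1/2}$ singularity sits right at the edge of what the H\"older input can absorb; the exponent $\alpha \in (0, 1)$ can be tuned freely but the bound saturates at $CK^{-\gamma}$ regardless. It is here that the strict ordering $\gamma < \gamma_3 < \gamma_2 < \gamma_1 < 1/96$ becomes essential: Part~(a) supplies $CK^{-4\gamma_2} \ll K^{-\gamma}$ and Proposition~\ref{prop:intXY}\ref{enu:intX} supplies $CK^{-\gamma'}$ with $\gamma' > \gamma$, so that the total error collapses to a clean $K^{-\gamma}$ once $K$ is large.
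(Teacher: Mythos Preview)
Your Part~(a) argument coincides with the paper's. For Part~(b), your semigroup-propagation route is correct but more elaborate than what the paper does. The paper never passes through $\Ucs(\sigma_K,\cdot)$ or invokes Part~(a) as an ingredient; instead it bounds $|\Uc_K(t,x) - \Ucs(\tfrac12,x)|$ directly and uniformly for all $t \in [\tfrac12,\sigma_K]$, by running the same term-by-term comparison of \eqref{eq:int:abs:} with \eqref{eq:Ucs} at a generic time $t$ and adding two elementary time-shift terms, $|2p(t,x) - 2p(\tfrac12,x)| \leq C|t-\tfrac12|$ and $\int_{1/2}^t \pN(t-s,\z(s),x)\,ds \leq \sqrt{(2/\pi)|t-\tfrac12|}$. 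This yields
\[
|\Uc_K(t,x) - \Ucs(\tfrac12,x)| \;\leq\; CK^{-4\gamma_2} + C|t-\tfrac12| + \sqrt{\tfrac{2}{\pi}|t-\tfrac12|},
\]
from which both \eqref{eq:Ubd12} and \eqref{eq:Ubd} follow at once by substituting $t=\tfrac12$ and $t=\sigma_K$ (using $\sigma_K-\tfrac12 = \tfrac17 K^{-2\gamma}$). Your route works, but it introduces the Neumann semigroup identity and the auxiliary remainder $\widetilde R_K$ only to arrive at the same $O(K^{-\gamma})$ saturation; the paper's single estimate is shorter and treats the two time points symmetrically.
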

\begin{proof}
With $ \U_K(t,x)=\Uc_K(t,0)-\Uc_K(t,x) $ 
and $ \Uss(x) $ defined in \eqref{eq:Uss},
we have that
$
	|\U_K(t,x)-\Uss(x)| \leq 2 \sup_{y\in\bbR} |\Uc_K(t,y)-\Ucs(\frac12,y)|.
$
To bound the r.h.s.,
we take the difference of the integral identities \eqref{eq:int:abs:} and \eqref{eq:Ucs}
to obtain
\begin{align}
	\notag
	|\Uc_K(t,&x)-\Ucs(\tfrac12,x)|
\\
	\label{eq:Uc12:esti:1}
	\leq&
	|\Gc_K(t,x)-2p(t,x)| + |2p(t,x)-2p(\tfrac12,x)|
\\
	\label{eq:Uc12:esti:2}
	&
	+\int_0^{t} |\pN(t-s,Z_K(s),x) - \pN(t-s,\z(s),x) | ds
	+
	\int_\frac12^t \pN(t-s,\z(s),x) ds
\\
	\label{eq:Uc12:esti:3}
	&
	+ |\rd_K(t,x)|.
\end{align}
We next bound the terms in \eqref{eq:Uc12:esti:1}--\eqref{eq:Uc12:esti:3} in sequel:
\begin{itemize}
\item[-] Using \eqref{eq:GK:est} for $ \alpha= 2\gamma_2 $ yields
	$ |\Gc_K(t,x)-2p(t,x)| \leq CK^{-4\gamma_2} $;
\item[-] Using \eqref{eq:p:Holdt} for $ \alpha=1 $, gives
	$	|2p(t,x)-2p(\tfrac12,x)| \leq C|\tfrac12-t|, $
	$ \forall t \geq \frac12 $;
\item[-] Using \eqref{eq:pN:est} for $ \alpha'=\frac{\gamma_2}{\gamma_1} $
	and \eqref{eq:Zhydro:abs:} for $ \beta=4\gamma_1 $, we have\\
	$  
		\int_0^{t} |\pN(t-s,Z_K(s),x) - \pN(t-s,\z(s),x) | ds
		\leq
		C \sup_{s\leq t} |Z_K(s)-\z(s)|^{\alpha'}
		\leq
		C K^{-4\gamma_2},
	$\\
	with probability $ \geq 1-CK^{-n} $;
\item[-] Using $ \pN(t-s,\z(s),s) \leq \frac{2}{\sqrt{2\pi(t-s)}} $, we obtain
	$ \int_\frac12^t \pN(t-s,\z(s),x) ds \leq \sqrt{\frac{2}{\pi}|t-\frac12|} $.
\item[-] Using \eqref{eq:rd:bd}, we have
	$  |\rd_K(t,x)| \leq CK^{-4\gamma_2} $, $ \forall t\in[\frac12,\sigma_K] $, $ x\in\bbR $,
	with probability $ \geq 1-CK^{-n} $.
\end{itemize}
\noindent
Combining these bounds yields
\begin{align}
	\label{eq:UcUcs}
	|\Uc_K(t,x)-\Ucs(\tfrac12,x)|
	\leq 
	C K^{-4\gamma_2}+C|t-\tfrac12| + \sqrt{\tfrac{2}{\pi}|t-\tfrac12|},
	\quad
	\forall t\in [\tfrac12,\sigma_K],
\end{align}
with probability $ \geq 1-CK^{-n} $.
Substituting in $ t=\frac12 $ in \eqref{eq:UcUcs} yields \eqref{eq:Ubd12}.
Similarly,
substituting in $ t=\sigma_K $ in \eqref{eq:UcUcs},
with $ |\sigma_K-\tfrac12| = \frac{K^{-2\gamma}}7 $,
we have, with probability $ \geq 1-CK^{-n} $,
\begin{align*}
	|\Uc_K(\sigma_K,x)-\Ucs(\tfrac12,x)|
	\leq 
	C K^{-4\gamma_2}+CK^{-2\gamma} + \sqrt{\tfrac{2}{7\pi}} K^{-\gamma}
	<
	K^{-\gamma},
\end{align*}
for all $ K $ large enough.
This concludes \eqref{eq:Ubd}.
\end{proof}

Recall the definition of Atlas models from the beginning of Section~\ref{sect:Int}.
Our strategy of proving Proposition~\ref{prop:Zhy}\ref{enu:Zhy:mvbdy}
is to \emph{reduce} the problem of the particle system $ (X(s);s\geq \sigma_K) $
to a problem of certain Atlas models $ (\Yup(t):t\geq 0) $ and $ (\Ylw(t):t\geq 0) $, 
constructed as follows.
To construct such Atlas models,
recalling the expression of $ u_1(\frac12,x) $ from \eqref{eq:u1},
we define
\begin{align}
	\label{eq:uup}
	\uup(x) &:= 
	\left\{\begin{array}{l@{,}l}
		u_1(\tfrac12, x) &	\text{ when } x\geq K^{-\gamma},
	\\
		0 &	\text{ when }  x < K^{-\gamma},
 	\end{array}\right.
\\
	\label{eq:ulw}
	\ulw(x) &:= 
	\left\{\begin{array}{l@{,}l}
		u_1(\tfrac12, x) &	\text{ when } x>0,
	\\
		u_1(\tfrac12,0) = 2 &	\text{ when }  -K^{-4\gamma_3} \leq x \leq 0,
	\\
		0	&	\text{ when } x < -K^{-4\gamma_3}.
 	\end{array}\right.
\end{align}
Adopting the notation $ \PPP(f(x)) $ for a Poisson point process on 
$ \bbR $ with density $ f(x) $,
for each $ K<\infty $
we let $ (\Yup(t;K):t\geq 0) $ and $ (\Ylw(t;K):t\geq 0) $ be Atlas models
starting from the following initial conditions
\begin{align}
	\label{eq:Yic:}
	(\Yup_i(0;K))_{i} \sim \PPP\big( \uup(\tfrac{x}{\sqrt{K}}) \big),
	\quad
	(\Ylw_i(0;K))_{i} \sim \PPP\big( \ulw(\tfrac{x}{\sqrt{K}}) \big),
\end{align}
and let $ \Wup(t;K) :=\min_{i} \Yup_i(t;K) $ and $ \Wlw(t;K) := \min_{i} \Ylw_i(t;K) $
denote the corresponding laggards.
\begin{remark}
\label{rmk:Y:Kdep}
The notations $ \Yup_i(t;K) $, etc., are intended to highlight
the dependence on $ K $ of the processes,
as is manifest from \eqref{eq:Yic:}.
To simplify notations, however,
hereafter we omit the dependence, and write $ \Yup_i(t;K)= \Yup_i(t) $, etc., 
unless otherwise noted.
\end{remark}
\noindent
We let $ \YKup_i(t) := \frac{1}{\sqrt{K}} \Yup_i(Kt) $,
denote the scaled process,
and similarly for $ \YKlw_i(t)$, $ \Wup_K(t) $ and $ \Wlw_K(t) $.
Under these notations, equation~\eqref{eq:Yic:} translates into
\begin{align}
	\label{eq:Yic}
	(\YKup_i(0))_{i} \sim \PPP\big( \uup(x) \big),
	\quad
	(\YKlw_i(0))_{i} \sim \PPP\big( \ulw(x) \big).
\end{align}
We let $ \Vup_K(t,x) := \frac{1}{\sqrt{K}} \#\{ \YKup_i(t) \leq x \} $
and $ \Vlw_K(t,x) := \frac{1}{\sqrt{K}} \#\{ \YKlw_i(t) \leq x \} $
denote the corresponding scaled distribution functions.

Having introduced the Atlas models $ \Yup $ and $ \Ylw $,
we next establish couplings that \emph{relate} these models
to the relevant particle system $ X $.
Recall the definition of the extinction time $ \tau^K_\text{ext} $ from \eqref{eq:extt}.
We let
\begin{align}
	\label{eq:abs>12}
	\tau^K_\text{abs} := \inf\{ t>\sigma_K : Z_K(t)=0 \}
\end{align}
denote the first absorption time (scaled by $ K^{-1} $) after $ \sigma_K $.

\begin{lemma} \label{lem:couple}
There exists a coupling of $ (X^K(s+\frac12);s\geq 0) $ and $ (\Ylw^K(s);s\geq 0) $ 
under which
\begin{align} 
	\label{eq:cplLw}
	\Pr \big( 
		\Wlw_K(s) \leq Z_K(\tfrac{1}{2} + s),
		\
		\forall s+\tfrac12 < \tau^K_\text{ext} 
		\big) 
	\geq 
	1 - CK^{-n}.
\end{align}
Similarly,
there exists a coupling of $ (X^K(s+\sigma_K);s\geq 0) $ and $ (\Yup^K(s);s\geq 0) $ 
under which
\begin{equation} 
	\label{eq:cplUp}
	\Pr \big( 
		\Wup_K(s) \geq Z_K(s+\sigma_K),
		\
		\forall s+\sigma_K < \tau^K_\text{ext} \wedge \tau^K_\text{abs}
		\big) 
	\geq 
	1 - CK^{-n}.
\end{equation}
\end{lemma}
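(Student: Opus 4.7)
The plan is to set up rank-based synchronous couplings between $X$ and each of the Atlas models $\Ylw,\Yup$, and in each case to reduce the desired inequality to an initial \emph{stochastic domination} that propagates in time by the monotonicity of Atlas-type dynamics. Throughout, set $\sigma_K := \tfrac12 + \tfrac17 K^{-2\gamma}$.

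For \eqref{eq:cplLw}, I would first show that, with probability $\geq 1-CK^{-n}$,
\begin{align}
\label{eq:init:dom:lw}
\Vlw_K(0,x) \;\geq\; \U_K(\tfrac12,x), \qquad \forall x\in\bbR.
\end{align}
By Lemma~\ref{lem:Ubd}~\eqref{eq:Ubd12}, $\U_K(\tfrac12,x)\leq \Uss(x)+CK^{-4\gamma_2}$ uniformly in $x$, while the construction~\eqref{eq:ulw} of $\ulw$ together with standard Poisson concentration (valid since the total mean mass is $O(1)$) gives
\begin{align}
\label{eq:poissonlw}
\Vlw_K(0,x) \;\geq\; \Uss(x)+2K^{-4\gamma_3}\ind_{x\geq 0}-CK^{-\frac14},\qquad \forall x\in\bbR,
\end{align}
uniformly in $x$ with probability $\geq 1-CK^{-n}$. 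Since $4\gamma_3<4\gamma_2<\tfrac{1}{24}<\tfrac14$, the extra mass $2K^{-4\gamma_3}$ strictly dominates both error terms, yielding~\eqref{eq:init:dom:lw}. I would then couple $\Ylw$ and $X(\Cdot+\tfrac12)$ by a rank-based synchronous coupling: at each time, the $j$-th lowest surviving $X^K$-particle shares Brownian increments with an appropriately ranked $\YKlw$-particle, while the lowest unmatched $\YKlw$-particles use independent noise. The monotonicity of Atlas-type dynamics in the initial condition preserves the pointwise inequality $\Vlw_K(t,\cdot)\geq \U_K(\tfrac12+t,\cdot)$ in time, and any $X$-absorption at $0$ only further decreases the right side, reinforcing the inequality. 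In particular $\Wlw_K(s)\leq Z_K(\tfrac12+s)$ on $\{s+\tfrac12<\tau^K_\text{ext}\}$.

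For \eqref{eq:cplUp} I would proceed symmetrically with the reversed dominance $\Vup_K(0,x)\leq \U_K(\sigma_K,x)$. For $x<K^{-\gamma}$ the left side vanishes by~\eqref{eq:uup}, while Proposition~\ref{prop:Zhy}\ref{enu:Zhy:abs} with $\beta=4\gamma_1$ together with Lemma~\ref{lem:zqd} guarantees $Z_K(\sigma_K)\leq CK^{-4\gamma_1}\ll K^{-\gamma}$, so the right side is non-negative. For $x\geq K^{-\gamma}$, Poisson concentration gives $\Vup_K(0,x)\leq \Uss(x)-\Uss(K^{-\gamma})+CK^{-\frac14}$, whereas Lemma~\ref{lem:Ubd}~\eqref{eq:Ubd} gives $\U_K(\sigma_K,x)\geq \Uss(x)-K^{-\gamma}$. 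Since $u_1(\tfrac12,0)=2$, a first-order expansion of $\Uss$ yields $\Uss(K^{-\gamma})=2K^{-\gamma}+O(K^{-2\gamma})$, which exceeds $K^{-\gamma}+CK^{-\frac14}$ for large $K$ (using $\gamma<\tfrac{1}{96}<\tfrac14$). The same rank-based synchronous coupling then preserves the ordering \emph{until the first post-$\sigma_K$ absorption}: before $\tau^K_\text{abs}$, both $X(\Cdot+\sigma_K)$ and $\Yup$ evolve as pure Atlas models, so monotonicity applies directly, yielding $\Wup_K(s)\geq Z_K(s+\sigma_K)$ on $\{s+\sigma_K<\tau^K_\text{ext}\wedge\tau^K_\text{abs}\}$.

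The main technical obstacle will be the precise implementation of the rank-based monotone coupling, ensuring that rank swaps and particle absorptions in the synchronous coupling do not destroy the dominance of the empirical distribution functions. The cleanest route is to describe both systems through the Skorokhod picture for the gap process---reducing to a standard monotonicity result for Atlas-type systems analogous to those used in \cite{ichiba13,cabezas15}---and then to treat $X$-absorptions as one-sided perturbations that reinforce the lower-bound inequality \eqref{eq:init:dom:lw} and are ruled out by the stopping time $\tau^K_\text{abs}$ in the upper-bound case.
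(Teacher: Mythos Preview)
Your proposal is correct and follows essentially the same approach as the paper: establish initial stochastic domination of the empirical distribution functions via Lemma~\ref{lem:Ubd} plus Poisson concentration, then propagate the ordering in time by a monotone coupling of Atlas models, iterating across absorption times for the lower bound and stopping at the first absorption for the upper bound. Two minor remarks: the paper dispatches the case $x<K^{-\gamma}$ in \eqref{eq:cplUp} simply by noting $\Vup_K(0,x)=0\le\U_K(\sigma_K,x)$, so your appeal to Proposition~\ref{prop:Zhy}\ref{enu:Zhy:abs} there is unnecessary; and rather than building the rank-based monotone coupling from the Skorokhod picture, the paper invokes the ready-made result \cite[Corollary~3.9]{sarantsev15} (stated here as Lemma~\ref{lem:sar:cmp}), which gives exactly the componentwise-ordering preservation you need and saves the work you flag as the ``main technical obstacle.''
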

\noindent
The proof requires a coupling result from~\cite{sarantsev15}:
\begin{lemma}[{\cite[Corollary~$3.9$]{sarantsev15}}]
\label{lem:sar:cmp}
Let $ (Y_i(s);s\geq 0)_{i=1}^{m} $ and $ (Y'_i(s);s\geq 0)_{i=1}^{m'} $ be Atlas models,
and let $ \W(s) $ and $ \W'(s) $ denote the corresponding laggards.
If $ Y'(0) $ dominates $ Y(0) $ componentwisely in the sense that
\begin{align}
	\label{eq:domin}
	m' \leq m,
	\quad
	Y'_{i}(0) \geq  Y_i (0),
	\
	i=1,\ldots,m',
\end{align}
then there exists a coupling of $ Y $ and $ Y' $ (for $ s>0 $)
such that the dominance continues to hold for $ s>0 $, i.e.\
$ Y'_{i}(s) \geq  Y_i (s) $, $ i=1,\ldots,m' $.
In particular, $ \W'(s) \geq W(s) $.
\end{lemma}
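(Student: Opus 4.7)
My plan is to apply the Sarantsev comparison (Lemma~\ref{lem:sar:cmp}) after verifying componentwise initial-configuration domination with high probability, and for the lower coupling to iterate the comparison across successive inter-absorption intervals of $X$. For the upper coupling~\eqref{eq:cplUp}, I take $Y'=\YKup$ (the system with fewer particles) and $Y=X^K(\sigma_K+\Cdot)$, and seek $\Vup_K(0,x)\leq \U_K(\sigma_K,x)$ for every $x\in\bbR$. Since $\uup$ vanishes on $(-\infty,K^{-\gamma})$, the inequality is immediate there. On $[K^{-\gamma},\infty)$, a standard uniform Poisson concentration argument (partition the axis into $O(K^{1/4+\e})$ pieces on which the mean of $\Vup_K(0,\Cdot)$ grows by $O(K^{-1/4+\e})$, then union-bound using monotonicity) yields $\Vup_K(0,x)=\Uss(x)-\Uss(K^{-\gamma})+O(K^{-1/4+\e})$, while Lemma~\ref{lem:Ubd} bound~\eqref{eq:Ubd} gives $\U_K(\sigma_K,x)\geq \Uss(x)-K^{-\gamma}$. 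Since $u_1(\tfrac12,0)=2$ gives $\Uss(K^{-\gamma})=2K^{-\gamma}+O(K^{-3\gamma})$, and $K^{-1/4}\ll K^{-\gamma}$ for $\gamma<\tfrac14$, the required ordering holds with probability $\geq 1-CK^{-n}$. The condition $s+\sigma_K<\tau^K_\text{abs}$ excludes absorption of $X$ in this window, so $X$ is a pure Atlas model there, and Lemma~\ref{lem:sar:cmp} directly yields $\Wup_K(s)\geq Z_K(s+\sigma_K)$.

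For the lower coupling~\eqref{eq:cplLw}, I swap roles: $Y'=X^K(\tfrac12+\Cdot)$ (fewer particles), $Y=\YKlw$ (more particles), requiring $\Vlw_K(0,x)\geq \U_K(\tfrac12,x)$ uniformly in $x$. On $(-\infty,0)$ the right-hand side vanishes; on $[0,\infty)$ the surplus mass $2K^{-4\gamma_3}$ carried by $\ulw$ on $[-K^{-4\gamma_3},0]$ gives $\Vlw_K(0,x)\geq \Uss(x)+2K^{-4\gamma_3}-O(K^{-1/4+\e})$, whereas Lemma~\ref{lem:Ubd} bound~\eqref{eq:Ubd12} yields $\U_K(\tfrac12,x)\leq \Uss(x)+CK^{-4\gamma_2}$. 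Since $\gamma_3<\gamma_2$ and $\gamma_3<\tfrac{1}{96}<\tfrac{1}{16}$, the surplus dominates both error terms, and the initial componentwise ordering holds with probability $\geq 1-CK^{-n}$.

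The main obstacle, specific to the lower coupling, is that $X$ is not a pure Atlas model on $[\tfrac12,\tau^K_\text{ext})$: a particle is absorbed at the origin each time the laggard hits $0$. I resolve this by iterating Lemma~\ref{lem:sar:cmp} on the successive inter-absorption intervals, using the following monotonicity observation. If $\YKlw_{(i)}\leq X_{(i)}$ componentwise just before an absorption time $\tau$, then after removing $X_{(1)}(\tau)=0$ and relabeling $\tilde X_{(i)}:=X_{(i+1)}(\tau)$, one has $\YKlw_{(i)}\leq \YKlw_{(i+1)}\leq X_{(i+1)}(\tau)=\tilde X_{(i)}$ for every $i\leq \#X(\tau^-)-1$, so the hypothesis of Lemma~\ref{lem:sar:cmp} is restored (with the particle-count gap enlarged by one). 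Splicing the resulting couplings across the countably many absorption stopping times by the strong Markov property yields the global inequality $\Wlw_K(s)\leq Z_K(s+\tfrac12)$ for all $s+\tfrac12<\tau^K_\text{ext}$; no polynomial loss accumulates in the failure probability because the only randomness consumed beyond the single initial dominance event is the Brownian drivers themselves.
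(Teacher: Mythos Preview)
Your write-up is not a proof of the stated Lemma~\ref{lem:sar:cmp} at all. That lemma is a comparison principle for Atlas models quoted from \cite{sarantsev15}; the paper does not prove it, it simply cites Corollary~3.9 there. What you have written is instead a proof of Lemma~\ref{lem:couple}, which \emph{uses} Lemma~\ref{lem:sar:cmp} as a black box. You say as much in your first sentence (``My plan is to apply the Sarantsev comparison (Lemma~\ref{lem:sar:cmp})\ldots''), so you are invoking the very result you were asked to establish.

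If the intent was really to prove Lemma~\ref{lem:couple}, then your argument is essentially the same as the paper's: verify the componentwise initial ordering via Poisson concentration for $\Vup_K(0,\cdot)$, $\Vlw_K(0,\cdot)$ against the estimates \eqref{eq:Ubd12}--\eqref{eq:Ubd} from Lemma~\ref{lem:Ubd}, apply Lemma~\ref{lem:sar:cmp} on each inter-absorption interval, and observe that removing the absorbed $X$-particle preserves the dominance hypothesis so the iteration continues. The paper carries out exactly this, in the same order of ideas (it treats $\Ylw$ first, you treat $\Yup$ first, but that is cosmetic). If, on the other hand, you were genuinely asked to prove Lemma~\ref{lem:sar:cmp} itself, you have not addressed it: that requires the monotone-coupling construction for competing Brownian particles developed in \cite{sarantsev15}, and nothing in your write-up touches that.
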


\begin{proof}[Proof of Lemma~\ref{lem:couple}]
As will be more convenient for the notations for this proof,
we work with \emph{unscaled} processes $ X(s+\frac12K) $, $ X(s+\sigma_KK) $ and $ Y(s) $, 
and construct the coupling accordingly.

We consider first $ \Ylw $ and prove \eqref{eq:cplLw}.
At $ s=0 $, order the particles as
$ (\Wlw(0)=\Ylw_{1}(0) \leq \Ylw_{2}(0) \leq \cdots) $, 
and $ (Z(\frac12K)= X_1(\frac{1}{2}K) \leq X_2(\frac{1}{2}K) \leq \cdots) $.
We claim that, regardless of the coupling, 
the following holds with probability $ 1-CK^{-n} $:
\begin{align} 
	\label{eq:desired}
	\#\{ \Ylw_{i}(0)\} \geq  \#\{X_i(\tfrac{1}{2}K)\},
	\quad
	\text{and }
	\Ylw_{i}(0) \leq X_i (\tfrac{1}{2}K),
	\ 
	\forall 1\leq i \leq \#\{ X_j(\tfrac{1}{2}K) \}.
\end{align}
Recalling from \eqref{eq:U} that $ \U_K(t,x) $ 
denotes the \emph{scaled} distribution function of $ X(t) $,
with $ \U_K(t,x)|_{x<0}=0 $,
we see that \eqref{eq:desired} is equivalent to the following
\begin{align}
\label{eq:gaol}
	\Pr\big(
		\Vlw_K(0,x) \geq \U_K(\tfrac{1}{2},x), 
		\
		\forall x\in \bbR_+
	\big)
	\geq 1-C K^{-n}.
\end{align}
To see why \eqref{eq:gaol} holds,
with $ (\YKlw_i(0))_i $ distributed in \eqref{eq:Yic},
we note that $ x \mapsto \sqrt{K}\Vlw_K(0,x) $, $ x \in [-K^{-4\gamma_3},\infty) $
is an inhomogeneous Poisson process with density $ \sqrt{K} \ulw(x) $.
From this,
it is standard (using Doob's maximal inequality and the \ac{BDG} inequality)
to show that
\begin{align}
	\label{eq:Vlw:bd:}
	\Big\Vert \sup_{x\in\bbR} 
	\Big| \Vlw_K(0,x) - \int_{-K^{-4\gamma_3}}^{x} \ulw(y) dy \Big| 
	\ \Big\Vert_m
	\leq
	C(m) K^{-\frac14},
	\quad
	\forall m \geq 2.
\end{align}
Further, with $ \ulw $ defined in \eqref{eq:ulw},
we have 
\begin{align*}
	\int_{-K^{-4\gamma_3}}^{x} \ulw(y) dy 
	= 	
	\Uss(x) + 2K^{-4\gamma_3},
	\quad
	\forall x \geq 0.
\end{align*}
Inserting this into \eqref{eq:Vlw:bd:},
followed by using Markov's inequality 
$ \Pr( |\xi| > K^{-\frac18} ) \leq K^{-\frac{m}{8}}\Ex(|\xi|^m) $
for $ m=8n $,
we arrive at
\begin{align}
	\label{eq:Vlw:bd}
	\Pr \Big( 
		| \Vlw_K(0,x) - \Uss(x) - 2K^{-4\gamma_3} |
		\leq
		K^{-\frac18},
		\
		\forall x \in \bbR_+
	\Big)
	\geq 1-CK^{-n}.
\end{align}
Combining \eqref{eq:Vlw:bd} and \eqref{eq:Ubd12} yields
\begin{align}
	\label{eq:VlwU}
	\Vlw_K(0,x) - \U_K(\tfrac12,x)
	\geq
	-K^{-\frac18}- CK^{-4\gamma_2}+2K^{-4\gamma_3},
	\quad
	\forall x \in \bbR_+,
\end{align}
with probability $ \geq 1-CK^{-n} $.
With $ \gamma_3<\gamma_2<\frac1{96} $,
the r.h.s.\ of \eqref{eq:VlwU} is positive for all $ K $ large enough,
so \eqref{eq:gaol} holds.

Assuming the event~\eqref{eq:desired} holds,
we proceed to construct the coupling for $ s>0 $.
Let $\tau_1:=\inf\{ t \geq \frac12K: Z(t)=0 \} $ 
be the first absorption time after time $ \frac12K $. 
For $ s \in [0,\tau_1-\frac12K) $, both processes $ \Ylw(s) $ and $ X(s+\frac12K) $ 
evolve as Atlas models.
Hence, by Lemma~\ref{lem:sar:cmp} for $ (Y(s),Y'(s))=(\Ylw(s),X(s+\frac12K)) $, 
we have a coupling such that
\begin{equation*}
	\Ylw_{i}(s) \leq X_{i} (s+\tfrac12K) 
	\quad 
	\forall 
	1 \leq i \leq \#\{X_j(\tfrac{1}{2}K)\},
	\
	s \in [0,\tau_1-\tfrac12K).
\end{equation*}
At time $ t=\tau_1 $,
the system $ X $ \emph{loses} a particle, 
so by reorder $ (\Ylw_i(\tau_1-\frac12K))_i $ and $ (X_i(\tau_1))_i $,
we retain the type of dominance as in \eqref{eq:desired}.
Based on this we iterate the prescribed procedure to the second absorption
$ \tau_2 := \inf\{ s>\tau_1: Z(s) =0 \} $.
As absorption occurs at most $ K $ times, 
the iteration procedure yields the desired coupling
until the extinction time $ \tau_\text{ext} $.
We have thus constructed a coupling of $ (\Ylw(s);s\geq 0) $ and $ (X(s+\frac12K):s\geq 0) $
under which \eqref{eq:cplLw} holds.

We now turn to $ \Yup $ and construct
the analogous coupling of $ (\Ylw(s);s\geq 0) $ and $ (X(s+\sigma_KK):s\geq 0) $.
Similarly to \eqref{eq:Vlw:bd:}, for $ \Vup_K(0,x) $ we have that
\begin{align}
	\label{eq:Vlw:::}
	\Pr\Big(	
		\Big| \Vup_K(0,x) - \int_{K^{-\gamma}}^x u_1(\tfrac12,y) dy \Big|
		\leq
		K^{-\frac18},
		\
		\forall x \geq K^{-\gamma}
	\Big)
	\geq 1-CK^{-n}.
\end{align}
As seen from the expression~\eqref{eq:u1},
$ u_1(\frac12,0)=2 $ and
$ x\mapsto u_1(\frac12,x) $ is smooth with bounded derivatives,
so in particular
\begin{align*}
	\Big| 
		\int_{K^{-\gamma}}^x u_1(\tfrac12,y)dy 
		- (\Uss(x)-2K^{-\gamma})
	\Big|
	\leq
	C K^{-2\gamma},
	\quad
	\forall x \geq K^{-\gamma}.
\end{align*}
Inserting this estimate into \eqref{eq:Vlw:::},
and combining the result with \eqref{eq:Ubd},
we obtain that, with probability $ \geq 1-CK^{-n} $,
\begin{align*}
	\Vup_K(0,x) 
	\leq 
	\U_K(\sigma_K,x) - 2K^{-\gamma} + K^{-\gamma} + CK^{-2\gamma}
	\leq
	\U_K(\sigma_K,x),
	\quad
	\forall x \geq K^{-\gamma},
\end{align*}
for all $ K $ large enough.
This together with $ \Vup_K(0,x)|_{x<K^{-\gamma}}=0 $ yields
the following dominance condition:
\begin{align} 
	\label{eq:desired:}
	\#\{ \Yup_{i}(0)\} \leq \#\{X_i(\sigma_KK)\},
	\quad
	\text{and }
	\Yup_{i}(0) \geq X_i (\sigma_KK),
	\ 
	\forall 1\leq i \leq \#\{ \Yup_j(0) \},
\end{align}
with probability $ \geq 1 - CK^{-n} $.
Based on this, we construct the coupling for 
$ \Yup $ and $ X $ similarly to the proceeding.
Unlike in the proceeding, however, 
when an absorption occurs, 
dominance properties of the type~\eqref{eq:desired:} may be destroyed.
Hence here we obtain the coupling
with the desired property only up to the first absorption time,
as in \eqref{eq:cplUp}.
\end{proof}

We see from Lemma~\ref{lem:couple} 
that $ \Wup_K $ and $ \Wlw_K $ serve as suitable upper and lower bounds for $ Z_K $.
With this, we now turn our attention to the Atlas models $ \Yup $ and $ \Ylw $,
and aim at establishing the hydrodynamic limits of $ \Wup_K $ and $ \Wlw_K $.
To this end,
recalling from \eqref{eq:seminorm} the definition of $ |\Cdot|'_{[0,T]} $
and that $ T<\infty $ is fixed,
we begin by establishing the follow estimates on 
$ |\Wup_K|'_{[0,T]} $ and $ |\Wlw_K|'_{[0,T]} $.
\begin{lemma}\label{lem:Znondecr}
There exists $ C<\infty $
such that 
\begin{align}
	\label{eq:Wup:nond}
	\Pr( |\Wup_K|'_{[0,T]} \leq CK^{-\frac18} ) &\geq 1 -CK^{-n},
\\
	\label{eq:Wlw:nond}
	\Pr( |\Wlw_K|'_{[0,T]} \leq CK^{-\frac18} ) &\geq 1 -CK^{-n},
\end{align}
\end{lemma}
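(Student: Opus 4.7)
The plan is to combine the integral identity of Proposition~\ref{prop:intXY}\ref{enu:intY} with the Stefan-type stability of Lemma~\ref{lem:pStef}, showing that $\Wup_K$ closely tracks a deterministic nondecreasing function $\overline z$; since the argument for $\Wlw_K$ is entirely parallel with $\ulw$ and $\underline z$ in place of $\uup$ and $\overline z$, I focus on \eqref{eq:Wup:nond}. I first verify that the Poisson initial data $(\YKup_i(0))_{i} \sim \PPP(\uup)$ of \eqref{eq:Yic} satisfies \eqref{eq:D*}--\eqref{eq:Dan}: the total particle count is Poisson of mean $\sqrt K \int \uup(x)\,dx = O(\sqrt K)$ since $\uup$ is bounded and integrable, so a Chernoff bound gives \eqref{eq:D*}; and the local count $\langle \emKY_0, \ind_{[a,b]}\rangle$ is $\tfrac{1}{\sqrt K}$ times a Poisson random variable of mean $\leq C|b-a|\sqrt K$, whose standard moment bounds yield \eqref{eq:Dan}.

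With these hypotheses, Proposition~\ref{prop:intXY}\ref{enu:intY} supplies, with probability $\geq 1-CK^{-n}$,
\[
\Vup_K(t,x) = \int p(t,x-y)\Vup_K(0,y)\,dy - \int_0^t p(t-s, x-\Wup_K(s))\,ds + \rdY_K(t,x),
\]
with $\sup_{t,x}|\rdY_K(t,x)| \leq CK^{-\gamma'}$ for any fixed $\gamma' \in (0,1/4)$. Evaluating at $x = \Wup_K(t)$, where $\Vup_K(t, \Wup_K(t)) = O(K^{-1/2})$ (only $O(1)$ unscaled particles sit at the minimum), and absorbing the Poisson fluctuation $\sup_y|\Vup_K(0,y)-\int_0^y \uup|\leq CK^{-1/4}$ of the initial data into the error, one integrates by parts in the first integral to arrive at a perturbed Stefan integral equation of the form~\eqref{eq:pStef:z} for $w = \Wup_K$ with perturbation $f$ of uniform norm $\leq CK^{-\gamma'}$.

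Applying Lemma~\ref{lem:pStef} with $(w_1,w_2) = (\Wup_K, \overline z)$, where $\overline z$ is the nondecreasing solution of the unperturbed equation (existence being a by-product of the analysis in Section~\ref{sect:mvbdy}, cf.\ Lemma~\ref{lem:W:hydro} below), uses $|\overline z|'_{[0,T]} = 0$ and $\Wup_K(0), \overline z(0) = O(K^{-\gamma})$ to conclude
\[
\sup_{t\leq T}|\Wup_K(t)-\overline z(t)| \leq C(K^{-\gamma'}+|\Wup_K|'_{[0,T]}).
\]
Since $\overline z$ is nondecreasing, $|\Wup_K|'_{[0,T]} \leq 2\sup_{t\leq T}|\Wup_K(t)-\overline z(t)|$, yielding the self-bounding inequality $|\Wup_K|'_{[0,T]} \leq 2C(K^{-\gamma'} + |\Wup_K|'_{[0,T]})$; once $2C<1$, this closes to $|\Wup_K|'_{[0,T]} \leq C'K^{-\gamma'}$, and choosing $\gamma' = 1/8$ gives \eqref{eq:Wup:nond}.

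The main obstacle is ensuring both the applicability hypothesis $\sum_i(|w_i(0)|+\|f_i\|_\infty+|w_i|'_{[0,T]}) \leq 1/C_1$ of Lemma~\ref{lem:pStef} and the condition $2C<1$ in the bootstrap. An a priori bound on $|\Wup_K|'_{[0,T]}$ follows from a particle-level comparison: letting $j^\ast$ be the minimizer at time $t$, nonnegativity of the Atlas drift gives $\YKup_{j^\ast}(t) \geq \YKup_{j^\ast}(s) + B^K_{j^\ast}(t) - B^K_{j^\ast}(s)$, hence $\Wup_K(s) - \Wup_K(t) \leq B^K_{j^\ast}(s) - B^K_{j^\ast}(t)$; a union bound over the $O(\sqrt K)$ particles and the reflection principle then yield a crude a priori bound of order $\sqrt{\log K}$, which is upgraded to $K^{-1/8}$ by iterating the stability estimate over a partition of $[0,T]$ into subintervals short enough that the relevant constant $C_1 = C_1(T,L)$ remains controlled.
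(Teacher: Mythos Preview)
Your proposal has a genuine circularity that does not close. The bootstrap step requires $2C_1<1$, where $C_1=C_1(T,L)$ is the stability constant of Lemma~\ref{lem:pStef}; inspecting that proof, $C_1=\tfrac{4}{c_1}\vee 1$ with $c_1=\inf_{t\le T,|z|\le L}\partial_z\Lambda(t,z)$. Since $\partial_z\Lambda(t,z)=\int_0^\infty p(t,z-y)\,u_1(\tfrac12,y)\,dy\le u_1(\tfrac12,0)=2$, one always has $C_1\ge 2$, so $2C_1<1$ is impossible and your self-bounding inequality $|\Wup_K|'_{[0,T]}\le 2C_1(K^{-\gamma'}+|\Wup_K|'_{[0,T]})$ carries no information. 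Partitioning $[0,T]$ into short subintervals does not help: $c_1$ does not blow up as the time window shrinks (it converges to $u_1(\tfrac12,z)\le 2$), so $C_1$ stays bounded below by $2$; moreover, on a shifted subinterval $[t_k,t_{k+1}]$ the process $\Wup_K$ satisfies an integral equation with initial datum $\Vup_K(t_k,\cdot)$ rather than $\Uss$, so Lemma~\ref{lem:pStef} as stated does not even apply. Your crude a~priori bound $|\Wup_K|'_{[0,T]}=O(\sqrt{\log K})$ is also too large to enter the smallness hypothesis $\sum_i(|w_i(0)|+\|f_i\|_\infty+|w_i|'_{[0,T]})\le 1/C_1$ of that lemma. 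Finally, you invoke the existence of a nondecreasing comparison function $\overline z$ via Lemma~\ref{lem:W:hydro}, but in the paper that lemma is proved \emph{using} \eqref{eq:Wup:nond}--\eqref{eq:Wlw:nond}, so this is a second circularity.

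The paper's argument is entirely different and bypasses the Stefan stability altogether. It is a pure particle-system comparison: (i) the Poisson initial law with density $\uup\le 2$ gives gap dominance $\Gup(0)\succeq\bigotimes\Exp(2)$, which by \cite[Theorem~4.7]{sarantsev14} propagates for all $s$; (ii) at each fixed $s_*$, one couples $\Yup(\cdot+s_*K)$ below an \emph{infinite} Atlas model $Y^*$ started with stationary $\Exp(2)$ gaps and $Y^*_1(0)=\Yup_1(s_*K)$, via \cite[Corollary~3.9]{sarantsev15}; (iii) under the stationary gap law the laggard is essentially pinned, $\sup_{s\le T}|\W^*_K(s)-\W^*_K(0)|\le K^{-1/8}$ with high probability by \cite[Proposition~2.3]{dembo15}; (iv) combining gives $\Wup_K(s_*)-\Wup_K(s)\le K^{-1/8}$ for $s\ge s_*$, and a union bound over a $K^{-2}$-mesh of $s_*$'s plus a continuity estimate finishes. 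The near-monotonicity thus comes from comparison with a stationary model whose laggard is known to be flat, not from any PDE stability.
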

\begin{proof}
The proof of \eqref{eq:Wup:nond}--\eqref{eq:Wlw:nond} are similar,
and we work out only the former here.

At any given time $ s\in\bbR_+ $,
let us \emph{order} the $ \Yup $-particles as
$ \W(s) = \Yup_1(s) \leq \Yup_2(s) \leq \ldots \leq \Yup_{N}(s) $,
where $ N:= \#\{\Yup_j(0)\} $,
and let $ \Gup_i(s) := \Yup_{i+1}(s)-\Yup_{i}(s) $ 
denote the corresponding gap process.
We adopt the convention that $ \Gup_i(s) := \infty $ if $ i+1 > N $,
so that $ \Gup(s) := (\Gup_i(s))_{i=1}^\infty $ is $ [0,\infty]^\infty $-valued.

We begin with a stochastic comparison of the gap process $ \Gup(s) $.
More precisely,
given any $ [0,\infty]^\infty $-valued random vectors $ \xi $ and $ \zeta $,
we say $ \xi $ stochastically dominate $ \zeta $,
denoted $ \xi \succeq \zeta $,
if there exists a coupling of $ \xi $ and $ \zeta $ under which
$ \xi_i \geq \zeta_i $, $ i=1,2,\ldots $.
Since $ (\Yup_i(0)) $ is distributed as in \eqref{eq:Yic},
with $ \uup(x) \leq 2 $, $ \forall x\in\bbR $, we have that
\begin{align}
	\label{eq:G0>G}
	\Gup(0) \succeq  \bigotimes_{i=1}^\infty \Exp(2).
\end{align}
By \cite[Theorem~4.7]{sarantsev14},
for any Atlas model satisfying the dominance property~\eqref{eq:G0>G},
the dominance will continue to hold for $ s>0 $, i.e., 
$ \Gup(s) \succeq \bigotimes_{i=1}^\infty \Exp(2) $.
(Theorem~4.7 of \cite{sarantsev14} does not state 
$ \Gup(s) \succeq \bigotimes_{i=1}^\infty \Exp(2) $ explicitly, 
but the statement appears in the first line of the proof,
wherein $ \pi = \bigotimes_{i=1}^\infty \Exp(2) $,
c.f., \cite[Example~1]{sarantsev14} and \cite{pal08}.)

Having established the stochastic comparison of $ \Gup(s) $,
we now return to estimating $ |\Wup_K|'_{[0,T]} $.
The seminorm $ |\Cdot|'_{[0,T]} $,
defined in \eqref{eq:seminorm}, measures how nondecreasing the given function is.
To the end of bounding $ |\Wup_K|'_{[0,T]} $, we fix $ s_*\in[0,T] $,
and begin by bounding the quantity
\begin{align}
	\label{eq:Wlw:nond:}
	\sup_{s\in[s_*,T]} (\Wup_K(s_*) - \Wup_K(s)).
\end{align}
We consider an \textbf{infinite Atlas model} $ (Y^*_i(s);t\geq 0)_{i=1}^\infty $,
which is defined analogously to \eqref{eq:alt} via the following stochastic differential equations
\begin{align}
	\label{eq:infAtlas}
	dY^*_i(s)= \ind_\Set{ Y^*_i(s)=\W^*(s)} dt + dB_i(s), \quad i=1,2,\ldots,
	\quad
	\W^*(s) := \min\nolimits_{i=1}^\infty\{ Y^*_i(s) \},
\end{align}
with the following initial condition
\begin{align}
	\label{eq:infAtl:ic}
	Y^*_1(0) := \Ylw_1(Ks_*),
	\text{ and, independently }
	(Y^*_{i+1}(0) - Y^*_{i}(0))_{i=1}^\infty \sim \bigotimes_{i=1}^\infty \Exp(2).
\end{align}
General well-posedness conditions 
for \eqref{eq:infAtlas} are studied in \cite{ichiba13,shkolnikov11}.
In particular,
the distribution~\eqref{eq:infAtl:ic} is an admissible initial condition,
and is in fact a stationary distribution of the gaps \cite{pal08}.
Under such stationary gap distribution,
the laggard $ \W^*(s) $ remains very close to a constant under diffusive scaling.
More precisely, 
letting $ \W^*_K(s) := \frac{1}{\sqrt{K}} \W^*(sK) $,
by \cite[Proposition~$2.3$, Remark~$2.4$]{dembo15},
we have
\begin{align}
	\label{eq:DT}
	\Pr \Big( \sup_{ s\in[s_*, T] }|\W^*_K(s)-\W^*_K(0)| \leq K^{-\eta} \Big) 
	\geq 1-C(\eta)K^{-n-2},
\end{align}
for any fixed $ \eta\in(0,\frac14) $.
In view of the bound \eqref{eq:DT},
the idea of bounding the quantity~\eqref{eq:Wlw:nond:}
is to couple $ (Y^*(s);s\geq 0) $ and $ (\Yup(s+s_*);s\geq 0) $.
As we showed previously $ \G(Ks_*) \succeq \bigotimes_{i=1}^\infty \Exp(2) $.
With $ (Y^*_i(0))_i $ distributed in \eqref{eq:infAtl:ic},
we couple $ (\Yup_i(s_*))_i $ and $ (Y^*_i(0))_i $ in such a way that
\begin{align}
	\label{eq:domin:}
	Y^*_i(0) \leq \Yup_i(Ks_*),
	\quad
	i=1,2,\ldots, \#\{ \Yup_i(Ks_*) \}.
\end{align}
Equation~\eqref{eq:domin:}
gives a generalization of the dominance condition~\eqref{eq:domin}
to the case where $ m=\infty $.
For such a generalization we have the analogous
coupling result from \cite[Corollary~$3.9$, Remark~$9$]{sarantsev15},
which gives a coupling of $ (Y^*(s);s\geq 0) $ and $ (\Yup(s+s_*K);s\geq 0) $
such that
\begin{align}
	\label{eq:Wcmp}
	\W^*(s) \leq \Wup(s+s_*K), \ \forall s \in\bbR_+.
\end{align}
Combining \eqref{eq:DT} for $ \eta=\frac18 $ and \eqref{eq:Wcmp}, 
together with $ \W^*(0)=\Wup(s_*K) $ (by \eqref{eq:infAtl:ic}), 
we obtain
\begin{align}
	\label{eq:nondecr:Z}
	\Pr \Big( 
		\sup_{s\in [s_*,T] } 
		(\Wup_K(s_*) - \Wup_K(s))  \leq K^{-\frac18 } 
	\Big)  
	\geq 1-CK^{-n-2}.
\end{align}

Having established the bound \eqref{eq:nondecr:Z}
for fixed $ s_*\in[0,T] $,
we now take the union bound of \eqref{eq:nondecr:Z} over
$ s_*=s_\ell:=K^{-2}T\ell $, $ 1 \leq \ell \leq K^{2} $, to obtain
\begin{equation} \label{eq:discreteZK}
	\Pr \Big(
		\sup_{s\in [s',T] } 
		(\Wup_K(s') - \Wup_K(s))  \leq K^{-\frac18 },
		\ 
		\forall s'=s_1,s_2,\ldots
	\Big) 
	\geq 
	1- C K^{-n}.
\end{equation}
To pass from the `discrete time' $ s'=s_1,s_2,\ldots $
to $ s'\in[0,T] $,
adopting the same procedure we used for obtaining \eqref{eq:driftBMest}, 
we obtain the following continuity estimate:
\begin{align} \label{eq:gapZK}
	\Pr \Big(
		\sup_{s \in [s_\ell,s_{\ell+1}]} |\Wup_K(s)-\Wup_K(s_\ell)| 
		\leq
		K^{-\frac18},
		\
		1 \leq \ell \leq K^{2}
	\Big) 
	\geq 
	1 -C K^{-n}.
\end{align}
Combining \eqref{eq:discreteZK}--\eqref{eq:gapZK} yields
\begin{align*}
	\Pr \Big(
		\sup_{s'<s\in[0,T]} (\Wup_K(s')-\Wup_K(s))
		\leq
		2K^{-\frac18}
	\Big) 
	\geq 
	1 -C K^{-n}.
\end{align*}
This concludes the desired result \eqref{eq:Wup:nond}. 
\end{proof} 

We next establish upper bonds on $ |\Wup_K| $ and $ |\Wlw_K| $.

\begin{lemma}\label{lem:Zupbd}
There exists $ C<\infty $ and a constant $ L=L(T)<\infty $
such that
\begin{align}
	\label{eq:Wup:bd}
	\Pr( |\Wup_K(t)| \leq L, \ \forall t\leq T ) \geq 1 -CK^{-n},
\\
	\label{eq:Wlw:bd}
	\Pr( |\Wlw_K(t)| \leq L, \ \forall t\leq T ) \geq 1 -CK^{-n}.
\end{align}
\end{lemma}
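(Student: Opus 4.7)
The plan is to bound $\Wup_K$ (and, by an identical argument, $\Wlw_K$) from below by its initial value via the near-monotonicity of Lemma~\ref{lem:Znondecr}, and from above by the empirical mean of $\YKup$ via a center-of-mass argument. I focus on $\Wup_K$ throughout. For the lower bound: since $\uup$ vanishes on $(-\infty,K^{-\gamma})$, the initial PPP is supported on $[K^{-\gamma},\infty)$, so $\Wup_K(0) \geq K^{-\gamma} \geq 0$. Combined with \eqref{eq:Wup:nond} this gives $\Wup_K(t) \geq -CK^{-1/8} \geq -1$ for all $t \leq T$ with probability $\geq 1 - CK^{-n}$. For $\Wlw_K$, the analogous bound uses that $\ulw$ is supported on $[-K^{-4\gamma_3},\infty)$, yielding $\Wlw_K(t) \geq -K^{-4\gamma_3} - CK^{-1/8} \geq -1$ for large $K$.

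For the upper bound, I sum the SDE \eqref{eq:alt} over $i$; since at Lebesgue-a.e.\ time the minimum is uniquely attained, $\sum_i \ind_{\{\Yup_i(s)=\Wup(s)\}} = 1$ a.e., so
\[
\sum_i \Yup_i(t) \;=\; \sum_i \Yup_i(0) \;+\; t \;+\; \sum_i B_i(t).
\]
Letting $N := \#\{\Yup_i(0)\}$, dividing by $N$ and applying the diffusive scaling gives
\[
\frac{1}{N}\sum_i \YKup_i(s) \;=\; \frac{1}{N}\sum_i \YKup_i(0) \;+\; \frac{s\sqrt{K}}{N} \;+\; \frac{1}{N\sqrt{K}}\sum_i B_i(sK).
\]
Since the minimum is at most the mean, $\Wup_K(s)$ is bounded above by the right-hand side, and it remains to control the three terms uniformly in $s \leq T$. (i) $N$ is Poisson with mean of order $\sqrt{K}\int\uup$, so a Chernoff bound gives $N \geq \tfrac12\sqrt{K}\int\uup$ with probability $\geq 1-\exp(-c\sqrt{K})$, whence $s\sqrt{K}/N \leq CT$. (ii) Poisson concentration applied to $\sum_i\YKup_i(0)$ (mean $\sqrt{K}\int x\uup(x)dx$, variance $\sqrt{K}\int x^2\uup(x)dx$) combined with (i) gives $N^{-1}\sum_i \YKup_i(0) \leq C$ with probability $\geq 1-CK^{-n}$. (iii) The martingale $M_K(s) := (N\sqrt{K})^{-1}\sum_i B_i(sK)$ has quadratic variation $s/N = O(K^{-1/2})$, and Doob's $L^p$ inequality applied with $p$ large shows $\sup_{s\leq T}|M_K(s)| \leq CK^{-1/8}$ with probability $\geq 1-CK^{-n}$. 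Combining (i)--(iii) yields $\Wup_K(t) \leq L$ for a constant $L=L(T) = CT + C + 1$ uniformly in $t \leq T$.

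The main subtlety is step (ii), which requires $\int x\uup(x)\,dx$ and $\int x^2\uup(x)\,dx$ (and likewise for $\ulw$) to be finite and uniformly bounded in $K$; this is ensured by the explicit Gaussian decay of $u_1(\tfrac12,x) = -2\partial_x p(\tfrac12,x) + 4\erfc(\tfrac12,x)$. Everything else is either the near-monotonicity already established in Lemma~\ref{lem:Znondecr} or standard Poisson / martingale concentration.
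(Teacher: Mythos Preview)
Your proof is correct, and your route for the upper bound is genuinely different from the paper's. Both proofs obtain the lower bound the same way (near-monotonicity from Lemma~\ref{lem:Znondecr} together with the support of the initial PPP). For the upper bound, however, the paper first derives the integral equation
\[
\int_0^\infty p(t,\Wup_{K}(t)-y)\,\Uss(y)\,dy
=
\int_{0}^{t}  p(t-s,\Wup_{K}(t)-\Wup_{K}(s))\,ds
+ \overline{F}_{K}(t,\Wup_{K}(t))
\]
from Proposition~\ref{prop:intXY}\ref{enu:intY}, and then runs a hitting-time comparison against the linear barrier $w^*(t)=\Wup_K(0)+at$: at the first crossing time the left side is bounded below by a positive constant while the right side is $O(1/a)$, forcing the crossing time to exceed $T$ once $a$ is large. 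Your argument instead sums the Atlas SDE to get an exact expression for the empirical mean and then invokes $\min\leq\text{mean}$, reducing everything to Poisson concentration for $N$ and $\sum_i\YKup_i(0)$ plus a Doob bound on $(N\sqrt{K})^{-1}\sum_iB_i(sK)$.

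Your approach is more elementary and avoids the integral-identity machinery entirely for this lemma. The paper's approach has one incidental advantage: the integral equation~\eqref{eq:Wup:intEq} it derives along the way is reused in the proof of Lemma~\ref{lem:W:hydro} to apply the stability estimate of Lemma~\ref{lem:pStef}. So if you adopt your center-of-mass argument here, you would still need to record~\eqref{eq:Wup:intEq} separately before Lemma~\ref{lem:W:hydro}; but that derivation is short and independent of the boundedness statement.
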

\begin{proof}
We first establish \eqref{eq:Wup:bd}.
The first step is to derive an integral equation for $ \Wup_K $.
Recalling that $ \Vup_K(t,x) $ denote the scaled distribution function of $ \Yup $,
we apply Lemma~\ref{lem:int}\ref{enu:intY} for $ Y=\Yup $ 
to obtain the following integral identity
\begin{align}
	\label{eq:Yup:int}
	\Vup_K(t,x) 
	= 
	\int_0^\infty p(t,x-y) \Vup_K(0,y) dy
	- \int_{0}^{t}  p(t-s,x-\W_K(s)) ds
	+ \rdY_K(t,x).
\end{align}	
Note that the conditions~\eqref{eq:D*}--\eqref{eq:Dan}
hold for $ \Yup(0) $, which is distributed as in \eqref{eq:Yic}.
Using the approximating \eqref{eq:Vlw:bd},
we have
\begin{align}
	\label{eq:Vupint}
	\Big| \int_0^\infty p(t,x-y) \Vup_K(0,y) dy - \int_0^t p(t,x-y) \Uss(y) dy \Big|
	\leq
	C K^{-4\gamma_2},
\end{align}
with probability $ \geq 1 - CK^{-n} $.
Using \eqref{eq:Vupint} and \eqref{eq:rd:bd} in \eqref{eq:Yup:int},
we rewrite the integral identity as
\begin{align}
	\label{eq:Yup:int:}
	\Vup_K(t,x) 
	= 
	\int_0^\infty p(t,x-y) \Uss(y) dy
	- \int_{0}^{t}  p(t-s,x-\W_K(s)) ds
	+ \overline{F}'_K(t,x),
\end{align}	
for some $ \overline{F}'_K(t,x) $ such that 
\begin{align}
	\label{eq:Yup:Fk'}
	\Pr\Big( |\overline{F}'_K|_{L^\infty([0,T]\times\bbR)} \leq C K^{-4\gamma_2} \Big)
	\geq 1-CK^{-n}.
\end{align}
Further, with $ (\Yup^K_i(0)) $ distributed as in \eqref{eq:Yic},
it is standard to verify that 
\begin{align}
	\label{eq:Yup:Wuploc}
	\Pr\Big( |\Wup_K(0)| \leq CK^{-4\gamma_3} \Big)
	\geq 1-CK^{-n}.
\end{align}
By definition, 
$
	\Vup_K(t,\Wup_K(t))
	= \frac{1}{\sqrt{K}} \#\big\{ \YKup_i(t)\in(-\infty,\Wup_K(t)] \big\}
	= \frac1{\sqrt{K}},
$
so setting $ x=\Wup_K(t) $ in \eqref{eq:Yup:int:}
we obtain the follow integral equations
\begin{align}
	\label{eq:Wup:intEq}
	\int_0^\infty p(t,\Wup_{K}(t)-y) \Uss(y) dy
	=
	\int_{0}^{t}  p(t-s,\Wup_{K}(t)-\Wup_{K}(s)) ds
	+ \overline{F}_{K}(t,\Wup_{K}(t)),
\end{align}
where $ \overline{F}_K(t,x) := \frac{1}{\sqrt{K}}-\overline{F}'_K(t,x) $,
which, by \eqref{eq:Yup:Fk'}, satisfies
\begin{align}
	\label{eq:Yup:Fk}
	\Pr\Big( |\overline{F}_K|_{L^\infty([0,T]\times\bbR)} \leq C K^{-4\gamma_3} \Big)
	\geq 1-CK^{-n}.
\end{align}

Having derive the integral equation~\eqref{eq:Wup:intEq} for $ \Wup_K $,
we proceed to showing \eqref{eq:Wup:bd} based on \eqref{eq:Wup:intEq}.
To this, we define $ w^*(t) := \Wup_K(0) + at $, for some $ a\in\bbR_+ $ to be specified later,
and consider the first hitting time $ \tau := \inf \{ t: \Wup_K(t) \geq w^*(t) \} $.
As $ w \mapsto \int_0^\infty p(\tau,w-y) \Uss(y) dy $ is nondecreasing, 
by \eqref{eq:Yup:Wuploc} we have
\begin{align}
	\label{eq:f1t}
	\int_0^\infty p(\tau,\Wup_K(0)+a\tau-y) \Uss(y) dy
	&\geq 
	\int_0^\infty p(\tau,1+a\tau-y) \Uss(y) dy := f_1(\tau),
\end{align}
with probability $ \geq 1- CK^{-n} $.
Using $ \Wup_K(\tau)-\Wup_K(s) \geq a(\tau-s) $, $ \forall s\leq \tau $, we obtain
\begin{align}
	\label{eq:f2a}
	\int_{0}^{\tau}  p(\tau-s,\Wup_K(\tau)-\Wup_K(s)) ds
	&
	\leq
	\int_0^\infty p(s,as) ds := f_2(a).
\end{align}
For the functions $ f_1 $ and $ f_2 $, 
we clearly have $ \inf_{t\leq T} f_1(t) := f_*>0 $
and $ \lim_{a\to\infty} f_2(a) =0 $.
With this, we now fix some large enough $ a $ with $ f_2(a) < \frac12 f_* $,
and insert the bounds \eqref{eq:Yup:Fk}--\eqref{eq:f2a} into \eqref{eq:Wup:intEq} to obtain
\begin{align*}
	\Pr\big( \{f_* \leq \tfrac12 f_* + K^{-4\gamma_3} \} \cap \{ \tau \geq T \} \big)
	\geq
	1-C K^{-n}.
\end{align*}
Since $ f_* >0 $, the event $ \{f_* \leq \tfrac12 f_* + K^{-4\gamma_3} \} $
is empty for all large enough $ K $, so 
\begin{align*}
	\Pr\Big( \Wup_K(t) \leq \Wup_K(0)+aT , \ \forall t\leq T \Big) 
	\geq 1 -CK^{-n}.
\end{align*}
This together with \eqref{eq:Yup:Wuploc}
gives the upper bound
$ \Pr(\Wup_K(t)\leq L,\forall t\leq T) \geq 1-CK^{-n} $ for $ L:=1+aT $.
A lower bound $ \Pr(\Wup_K(t)\geq-L,\forall t\leq T) \geq 1-CK^{-n} $ 
follows directly from \eqref{eq:nondecr:Z} for $ s_*=0 $.
From these we conclude the desired result~\eqref{eq:Wup:bd}.

Similarly to \eqref{eq:Wup:intEq}, for $ \Vlw_K(t,x) $ we have
\begin{align}
	\label{eq:Wlw:intEq}
	\int_0^\infty p(t,\Wlw_{K}(t)-y) \Uss(y) dy
	=
	\int_{0}^{t}  p(t-s,\Wlw_{K}(t)-\Wlw_{K}(s)) ds
	+ \underline{F}_{K}(t,\Wlw_{K}(t)),
\end{align}
for some $ \underline{F}_{K}(t,x) $ satisfying
\begin{align}
	\label{eq:Ylw:Fk}
	\Pr\Big( |\underline{F}_K|_{L^\infty([0,T]\times\bbR)} \leq C K^{-\gamma} \Big)
	\geq 1-CK^{-n}.
\end{align}
From this, the same argument in the proceeding
gives the desired bound~\eqref{eq:Wlw:bd} for $ L=1+aT $.
\end{proof}

We now establish the hydrodynamic limit of $ \Wup_K $ and $ \Wlw_K $.
\begin{lemma}
\label{lem:W:hydro}
There exists $ \z(\Cdot+\frac12)\in\Csp(\bbR_+) $ that solves \eqref{eq:zeq}
(which is unique by Corollary~\ref{cor:unique}).
Furthermore, for some $ C<\infty $, we have
\begin{align}
	\label{eq:Wlw:hydro}
	\Pr\Big(
		|\Wlw_K(s)-\z(\tfrac12+s)| \leq CK^{-4\gamma_3}, \ \forall s\in[0,T]
	\Big)
	&\geq
	1 - CK^{-n},
\\
	\label{eq:Wup:hydro}
	\Pr\Big(
		|\Wup_K(s)-\z(s+\tfrac12)| \leq CK^{-\gamma}, \ \forall s\in[0,T]
	\Big)
	&\geq
	1 - CK^{-n}.
\end{align} 
\end{lemma}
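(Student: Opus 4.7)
The strategy is to view $\Wup_K$ and $\Wlw_K$ as approximate solutions of the integral equation \eqref{eq:zeq}---equivalently, of equation \eqref{eq:pStef:z} with small inhomogeneities $\overline{F}_K$ and $\underline{F}_K$ as in \eqref{eq:Wup:intEq}--\eqref{eq:Wlw:intEq}---and to apply Lemma~\ref{lem:pStef} twice: once as a Cauchy-type criterion that constructs $\z$, and once as a rate estimate comparing each $\Wup_K$, $\Wlw_K$ to $\z(\Cdot+\tfrac12)$. Every ingredient Lemma~\ref{lem:pStef} demands is already in place: uniform $L^\infty$ bounds from Lemma~\ref{lem:Zupbd}, small $|\Cdot|'_{[0,T]}$ seminorms from Lemma~\ref{lem:Znondecr}, small initial values from \eqref{eq:Yup:Wuploc} (and its analogue for $\Ylw$), and small inhomogeneities from \eqref{eq:Yup:Fk}--\eqref{eq:Ylw:Fk}. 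Each of these vanishes polynomially as $K\to\infty$ on an event of probability $\geq 1-CK^{-n}$, so the smallness hypothesis of Lemma~\ref{lem:pStef} holds for all large $K$.

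For existence of $\z$, I would fix a dyadic subsequence $K_j:=2^j$ and apply Lemma~\ref{lem:pStef} to the pair $(\Wup_{K_j},\Wup_{K_{j+1}})$, obtaining $\sup_{t\in[0,T]}|\Wup_{K_j}(t)-\Wup_{K_{j+1}}(t)|\leq CK_j^{-\gamma}$ with probability $\geq 1-CK_j^{-n}$. Taking $n\geq 2$ and invoking Borel--Cantelli yields an almost surely Cauchy sequence in $\Csp([0,T])$ with some (a priori random) limit $\hat\z(\Cdot+\tfrac12)$. Passing to the limit in \eqref{eq:Wup:intEq}---which is continuous under uniform convergence of $w$ on the bounded sets supplied by Lemma~\ref{lem:Zupbd}---shows that $\hat\z$ satisfies \eqref{eq:zeq}, is nondecreasing, and vanishes at $\tfrac12$, as limits of the corresponding approximate statements for $\Wup_{K_j}$. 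Corollary~\ref{cor:unique} then forces $\hat\z$ to be deterministic, and I would \emph{define} $\z(\Cdot+\tfrac12):=\hat\z$.

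For the convergence rates, I would apply Lemma~\ref{lem:pStef} once more with $w_1=\Wup_K$, $w_2=\z(\Cdot+\tfrac12)$, $f_1=\overline{F}_K$, and $f_2\equiv 0$. Since $\z$ solves \eqref{eq:zeq} exactly and is nondecreasing with $\z(\tfrac12)=0$, all three $w_2$-contributions vanish, and the lemma bounds $\sup_{t\in[0,T]}\bigl(\Wup_K(t)-\z(t+\tfrac12)\bigr)$ by $C\bigl(|\Wup_K(0)|+\sup|\overline{F}_K|+|\Wup_K|'_{[0,T]}\bigr)\leq CK^{-\gamma}$ on the desired event. Reapplying with $w_1\leftrightarrow w_2$ controls the opposite sign and yields \eqref{eq:Wup:hydro}. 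The proof of \eqref{eq:Wlw:hydro} is identical in structure, the rate $CK^{-4\gamma_3}$ coming from the tighter Poisson approximation of $\ulw$, whose support extends only to $-K^{-4\gamma_3}$, together with the corresponding bound on $\underline{F}_K$.

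The principal obstacle is the existence step: both the passage to the limit in \eqref{eq:Wup:intEq} and the identification of the random limit $\hat\z$ as deterministic require some care. The former is a dominated-convergence-style argument that exploits the uniform $L^\infty$ bound of Lemma~\ref{lem:Zupbd} and the continuity of $w\mapsto \int_0^t p(t-s,w(t)-w(s))ds$ and of $w\mapsto \int_0^\infty p(t,w-y)(\Ucs(\tfrac12,0)-\Ucs(\tfrac12,y))dy$ in sup norm on bounded sets; the latter rests essentially on Corollary~\ref{cor:unique}. By contrast, once $\z$ is in hand, the rate step is an essentially mechanical invocation of Lemma~\ref{lem:pStef} and introduces no new ideas.
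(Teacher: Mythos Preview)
Your proposal is correct and follows essentially the same route as the paper: both construct $\z$ as the limit of a dyadic subsequence of $\Wup_K$ via the Cauchy estimate from Lemma~\ref{lem:pStef}, identify the limit as deterministic through Corollary~\ref{cor:unique}, and then read off the rates. The only cosmetic difference is that the paper extracts the final rate by letting one index tend to infinity in the two-parameter Cauchy bound~\eqref{eq:Wup:cauchy}, whereas you invoke Lemma~\ref{lem:pStef} once more with $w_2=\z(\Cdot+\tfrac12)$ directly; both are equivalent.
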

\begin{proof}
The strategy of the proof is 
to utilize the fact that $ \Wup_K $ and $ \Wlw_K $
satisfy the integral equations \eqref{eq:Wup:intEq} and \eqref{eq:Wlw:intEq},
respectively,
and apply the stability estimate Lemma~\ref{lem:pStef}
to show the convergence of $ \Wup_K $ and $ \Wlw_K $.
Given the estimates
\eqref{eq:Yup:Fk} and \eqref{eq:Ylw:Fk}, 
\eqref{eq:Wup:nond}--\eqref{eq:Wlw:nond},
and \eqref{eq:Wup:bd}--\eqref{eq:Wlw:bd},
the proof of \eqref{eq:Wlw:hydro} and \eqref{eq:Wup:hydro} are similar,
and we present only the former.

Such a $ \z $ will be constructed as the unique limit point of $ \Wup_K $.
We begin by showing the convergence of $ \Wup_K $.
To this end, we fix $ K_1<K_2 $, and consider the processes $ \Wup_{K_1} $ and $ \Wup_{K_2} $.
Since they satisfy the integral equation~\eqref{eq:Wup:intEq},
together with the estimates
\eqref{eq:Yup:Wuploc}, \eqref{eq:Yup:Fk}, \eqref{eq:Wup:nond} and \eqref{eq:Wup:bd},
we apply Lemma~\ref{lem:pStef} for $ (w_1,w_2)=(\Wup_{K_1},\Wup_{K_2}) $ to obtain
\begin{align}
	\label{eq:Wup:cauchy}
	\Pr( |\Wup_{K_1}(s)-\Wup_{K_2}(s)| \leq CK_1^{-4\gamma_3}, \forall s\in[0,T] )
	\geq
	1 - C{K_1}^{-n}.
\end{align}
We now consider the subsequence $ \{\Wup_{K_m}\}_{m=1}^\infty $,
for $ K_m:=2^m $.
Setting $ (K_1,K_2)=(2^{m},2^{m+j}) $ in \eqref{eq:Wup:cauchy},
and taking union bound of the result over $ j\in\bbN $,
we obtain
\begin{align}
	\label{eq:Wup:cauchy:geo}
	\Pr\Big( 
		\sup_{t\in[0,T]}|\Wup_{K_{m}}(t)-\Wup_{K_{m'}}(t)| \leq CK_m^{-4\gamma_3}, \forall m'>m
	\Big)
	\geq
	1 - CK_m^{-n}.
\end{align}
From this, we conclude that $ \{ \Wup_{K_m} \}_m $ is almost surely Cauchy in $ \Csp([0,T]) $,
and hence converges to a possibly random limit $ W\in\Csp([0,T]) $.
Now, letting $ K\to\infty $ in \eqref{eq:Wup:intEq},
with \eqref{eq:Yup:Fk}, we see that $ W $ must solve \eqref{eq:zeq}.
Further, by \eqref{eq:Wup:nond} and \eqref{eq:Yup:Wuploc},
$ t\mapsto W(t) $ is nondecreasing with $ W(0)=0 $.
Since, by Corollary~\ref{cor:unique}, the solution to \eqref{eq:zeq} is unique,
$ W(t) =: \z(t+\frac12) $ must in fact be deterministic.
Now, letting $ m'\to\infty $ in \eqref{eq:Wup:cauchy:geo} yields
\begin{align}
	\label{eq:Wup:cauchy:geo:}
	\Pr\Big( 
		\sup_{s\in[0,T]}|\Wup_{K_{m}}(s)-\z(s+\tfrac12)| \leq CK_m^{-4\gamma_3}
	\Big)
	\geq
	1 - CK_m^{-n}.
\end{align}
Combining \eqref{eq:Wup:cauchy} and \eqref{eq:Wup:cauchy:geo:},
we concludes the desired result~\eqref{eq:Wlw:hydro}.
\end{proof}

Having established the hydrodynamic limit of the laggards
$ \Wup_K $ and $ \Wlw_K $ of the Atlas models $ \Yup $ and $ \Ylw $,
we now return to proving Proposition~\ref{prop:Zhy}\ref{enu:Zhy:mvbdy},
i.e.\ proving the hydrodynamic limit \eqref{eq:Zhy:mvbdy} 
of $ Z_K(t) $ for $ t \in[\sigma_K,T] $.
We recall from Lemma~\ref{lem:couple} that we have a coupling
of $ Z_K $ and $ \Wlw_K $ under which
$ \Wlw_K(t-\frac12) \leq Z_K(t) $, $ \forall t\in[\frac12,\tau^K_\text{ext}) $,
with probability $ \geq 1-CK^{-n} $.
By using the lower bound \eqref{eq:extT} on the scaled extinction time $ \tau^K_\text{ext} $,
we have that 
\begin{align*} 
	\Pr \big( 
		\Wlw_K(t-\tfrac12) \leq Z_K(t),
		\
		\forall t\in[\tfrac12,T]
		\big) 
	\geq 
	1 - CK^{-n}.
\end{align*}
Combining this with \eqref{eq:Wlw:hydro} yields
\begin{align}
	\label{eq:Zhy:lw}
	\Pr\Big(  
		Z_K(t) \geq \z(t) - CK^{-4\gamma_3}, \forall t\in[\tfrac12,T] 
	\Big)
	\geq 
	1 - CK^{-n}.
\end{align}
Equation~\eqref{eq:Zhy:lw} gives the desired lower bound on $ Z_K $.
Further, it provides a lower bound on the absorption time 
$ \tau^K_\text{abs} $ (as defined in \eqref{eq:abs>12}).
To see this, we use \eqref{eq:Zhy:lw} to write
\begin{align}
	\label{eq:Zhy:lw:}
	\Pr\Big(
		\inf_{t\in[\sigma_K,T]} Z_K(t) \geq \inf_{t\in[\sigma_K,T]} \z(t) - CK^{-4\gamma_3}
	\Big)
	\geq
	1-CK^{-n}.
\end{align}
With $ \sigma_K=\frac12+\frac17K^{-2\gamma} $
and $ t\mapsto \z(t) $ being non-decreasing,
the quadratic growth \eqref{eq:zdq} of $ \z(t) $
near $ t=\frac12 $ gives
\begin{align}
	\label{eq:zlw}
	\inf_{t\in[\sigma_K,T]} \z(t) = \z(\sigma_K) \geq \tfrac{1}{C}K^{-4\gamma}.
\end{align}
Combining \eqref{eq:zlw} with \eqref{eq:Zhy:lw:}, followed by using $ \gamma<\gamma_3 $,
we obtain
\begin{align}
	\label{eq:tabs:bd}
	\Pr\Big( \inf_{t\in[\sigma_K,T]} Z_K(t) >0 \Big)
	=
	\Pr\big( \tau^K_\text{abs} > T \big)
	\geq
	1 -CK^{-n}.
\end{align}
Using the bounds \eqref{eq:tabs:bd} and \eqref{eq:extT}
on $ \tau^K_\text{abs} $ and $ \tau^K_\text{ext} $ 
withing the coupling \eqref{eq:cplUp},
we have that $ Z_K(t) \leq \Wup_K(t) $, $ \forall t\in[\sigma_K,T] $,
with probability $ \geq 1-CK^{-n} $.
From this and \eqref{eq:Wup:hydro}, we conclude
\begin{align}
	\label{eq:Zhy:up}
	\Pr\Big(  
		Z_K(t) \leq \z(t) + CK^{-\gamma}, \forall t\in[\sigma_K,T] 
	\Big)
	\geq 
	1 - CK^{-n}.
\end{align}
As $ 4\gamma_3>\gamma $,
the bounds \eqref{eq:Zhy:lw} and \eqref{eq:Zhy:up} 
conclude the desired hydrodynamic limit~\eqref{eq:Zhy:mvbdy} of $ Z_K(t) $.

\section{Proof of Theorem~\ref{thm:aldous}}
\label{sect:aldous}

We first settle Part\ref{enu:aldous:upbd}.
To this end,
we fix an arbitrary strategy $ \phi(t) = (\phi_i(t))_{i=1}^K $, 
fix $ \gamma\in(0,\frac14) $ and $ n<\infty $,
and use $ C=C(\gamma,n)<\infty $ to denote a generic constant
that depends only on $ \gamma,n $, and \emph{not} on the strategy in particular.
Our goal is to establish 
an upper on $ \Uc_K(\infty) := \lim_{t\to\infty} \Uc_K(t,Z_K(0)) $,
the total number of ever-surviving particles, scaled by $ \frac{1}{\sqrt{K}} $.
To this end, with $ \Uc_K(\infty) \leq \Uc_K(\frac12,0) $,
we set $ t=\frac12 $ in \eqref{eq:int:abs} to obtain
\begin{align}
	\label{eq:int:aldous}
	\Uc_K(\infty) \leq \Gc_K(\tfrac12,0)
	+ \sum_{i=1}^K \int_{0}^{\frac12\wedge\tau_i^K} \phi^K_i(s) \pN(\tfrac12-s,\XKi(s),0) ds
	+ \rd_K(\tfrac12,x).
\end{align}
On the r.h.s.\ of \eqref{eq:int:aldous}, we
\begin{itemize}
\item[-] use \eqref{eq:Gc:bd} to approximate $ \Gc_K(\tfrac12,0) $ with $ 2p(t,x) $;
\item[-] use $ \pN(\tfrac12-s,\XKi(s),0) \leq 2p(\tfrac12-s,0) $
	and $ \sum_{i=1}^K \phi^K_i(s) \leq 1 $ to bound the integral term;
\item[-] use \eqref{eq:rd:bd} to bound the remainder term $ \rd_K(\tfrac12,x) $.
\end{itemize}
\indent
We then obtain
\begin{align}
	\label{eq:int:aldous:}
	\Uc_K(\infty) 
	\leq 
	2p(\tfrac12,0) + \int_{0}^{\frac12} 2p(\tfrac12-s,0) ds
	+ CK^{-\gamma},
\end{align}
with probability $ \geq 1-CK^{-n} $.
Comparing the r.h.s.\ of \eqref{eq:int:aldous:} 
with the r.h.s.\ of \eqref{eq:Ucs:abs},
followed by using $ \Ucs(\frac12,0)=\frac{4}{\sqrt{\pi}} $ (from \eqref{eq:Usc:cnsv})
we obtain
\begin{align*}
	\Uc_K(\infty) 
	\leq 
	\Ucs(\tfrac12,0) + CK^{-\gamma}
	=
	\tfrac{4}{\sqrt{\pi}} + CK^{-\gamma}.
\end{align*}
with probability $ \geq 1-CK^{-n} $.
This concludes the desired result~\eqref{eq:aldous:upbd} of Part\ref{enu:aldous:upbd}.

We now turn to the proof of Part\ref{enu:aldous:optl}.
Fix $ \gamma\in(0,\frac{1}{96}) $ and $ n<\infty $, and
specialize $ \phi(t) $ to the push-the-laggard strategy hereafter.
Using Theorem~\ref{thm:hydro} for $ T=1 $,
with $ \Uc_K(t) := \Uc_K(t,Z_K(t)) $,
we have that 	
$ 	
	\sup_{t\in[\frac12,1]} |\Uc_K(t) - \Ucs(t,\z(t)) |
	\leq
	CK^{-\gamma},
$
with probability $ \geq 1 -CK^{-n} $.
Combining this with \eqref{eq:Usc:cnsv} yields
\begin{align}
	\label{eq:aldous:hydro01}
	\Pr\Big( 
		|\Uc_K(t) -\tfrac{4}{\sqrt{\pi}}| \leq CK^{-\gamma} ,
		\
		\forall t\in [\tfrac12,1]
	\Big)
	\geq
	1 - CK^{-n}.
\end{align}

Having established \eqref{eq:aldous:hydro01},
we next establish that
\begin{align}
	\label{eq:Wlw>0}
	\Pr \Big(  \inf_{t\in[1,\infty)} \Wlw_K(t-\tfrac12) >0 \Big) \geq 1 -CK^{-n}.
\end{align}
We claim that \eqref{eq:Wlw>0} 
is the desired property in order to complete the proof.
To see this,
recall from Lemma~\ref{lem:couple} that we have a coupling under which \eqref{eq:cplLw} holds,
and, by Lemma~\ref{lem:extT},
we assume without lost of generality that $ \tau^K_\text{ext} >1 $.
Under this setup, 
the event in \eqref{eq:Wlw>0} 
implies $ Z_K(t) \geq \Wlw_K(t-\frac12) >0 $, $ \forall t\in [1,\tau^K_\text{ext}) $
which then forces $ \tau_\text{ext}=\infty $.
That is, the statement~\eqref{eq:Wlw>0} implies $ \Pr(Z_K(t) >0 , \forall t>1)\geq 1-CK^{-n} $,
and hence $ \Pr(\Uc_K(t)=\Uc_K(1),\forall t\geq 1) \geq 1-CK^{-n} $.
This together with \eqref{eq:aldous:hydro01} concludes \eqref{eq:aldous:optl}.

Returning to the proof of \eqref{eq:Wlw>0},
we recall from Remark~\ref{rmk:Y:Kdep}
that the Atlas model $ \Ylw(t) $ as well as its laggard $ \Wlw(t) $
actually depend on $ K $,
which we have omitted up until this point for the sake of notations.
Here we restore such a dependence and write $ \Ylw(t;K) $ and $ \Wlw(t;K) $, etc.
Recall that the initial condition of the Atlas model $ (\Ylw_i(0;K))_i $
is sampled from the Poisson point process 
$ \PPP(\ulw(\frac{x}{\sqrt{K}})) $ in \eqref{eq:Yic:}.
From the definition~\eqref{eq:ulw} of $ \ulw $
and the explicit formula \eqref{eq:u1} of $ u_1(\frac12,x) $,
it is straightforward to verify that the density function 
$ x \mapsto \ulw(\frac{x}{\sqrt{K}}) $ is nonincreasing on its support
$ [-K^{\frac12-4\gamma_3},\infty) $.
Consequently, fixing $ K_1<K_2 $,
we have 
\begin{align*}
	\ulw(\tfrac{1}{\sqrt{K_1}}(x+{K_1}^{\frac12-4\gamma_3})) 
	\leq \ulw(\tfrac{1}{\sqrt{K_2}}(x+{K_2}^{\frac12-4\gamma_3})),
	\quad
	\forall x\in\bbR.
\end{align*}
With this, 
it is standard to construct a coupling of $ \Ylw(0;K_1) $ and $ \Ylw(0;K_2) $ 
under which
\begin{align*}
	&\#\{ \Ylw_i(0;K_1) \} \leq \#\{ \Ylw_i(0;K_2) \},
\\	
	&\Ylw_i(0;K_2)+{K_2}^{\frac12-\gamma_3} 
	\leq 
	\Ylw_i(0;K_1)+{K_1}^{\frac12-\gamma_3},	
	\
	\forall i=1,\ldots, \#\{ \Ylw_i(0;K_1) \} .
\end{align*}
By Lemma~\ref{lem:sar:cmp},
such a dominance coupling at $ s=0 $ is leveraged into a dominance coupling for all $ s>0 $,
yielding
\begin{align}
	\label{eq:W:RG12}
	\Wlw(s;K_1)
	\geq
	\Wlw(s;K_2)+{K_2}^{\frac12-4\gamma_3}-{K_1}^{\frac12-4\gamma_3}
	\geq
	\Wlw(s;K_2),
	\quad
	\forall s \geq 0.	
\end{align}
Now, fix $ K<\infty $ and consider the geometric subsequence $ L_m := K2^{m} $.
We use the union bound to write
\begin{align*}
	\Pr\Big(  \inf_{s\in[\frac12K,\infty)} \Wlw(s;K) \leq 0 \Big)
	\leq
	\sum_{m=1}^\infty 
	\Pr\Big(  \inf_{s\in[L_{m-1},L_{m}]} \Wlw(s;K) \leq 0 \Big).
\end{align*}
Within each $ m $-th term in the last expression,
use the coupling \eqref{eq:W:RG12} for $ (K_1,K_2)=(L_{m-1},L_m) $ to obtain
\begin{align}
	\label{eq:Wlw:rg:1}
	\Pr\Big(  \inf_{s\in[L_{m-1},L_{m}]}\Wlw(s;K) \leq 0 \Big)
	\leq
	\Pr\Big(  \inf_{s\in[L_{m-1},L_{m}]} \Wlw(s;L_m) \leq 0 \Big).
\end{align}
Next, set $ T=1 $ and $ K=L_m $ in \eqref{eq:Wlw:hydro} 
and rewrite the resulting equation in in the pre-scaled form as
\begin{align}
	\label{eq:Wlw:rg:2}
	\Pr\Big( 
		|\Wlw(s;L_m) - \sqrt{L_m}\z(\tfrac{s}{L_m}+\tfrac12)| \leq CL_m^{\frac12-4\gamma_3}, 
		\forall s\in[0,L_m] 
	\Big)
	\geq
	1 -CL_m^{-n}.
\end{align}
Further, by Lemma~\ref{lem:zqd} and the fact that $ t\mapsto \z(t) $ is nondecreasing,
we have that
\begin{align}
	\label{eq:Wlw:rg:3}
	\inf_{s\in[L_{m-1},L_m]} \z(\tfrac{s}{L_m}+\tfrac12)
	=
	\inf_{t\in[\frac12,1]} \z(t)
	=
	\z(\tfrac12) >0.
\end{align}
Combining \eqref{eq:Wlw:rg:2}--\eqref{eq:Wlw:rg:3} yields 
$ \Pr( \inf_{s\in[L_{m-1},L_{m}]} \Wlw(s,L_m) \leq 0 ) \leq CL_m^{-n} $.
Inserting this bound into \eqref{eq:Wlw:rg:1},
and summing the result over $ m $,
we arrive at
\begin{align*}
	\Pr\Big(  \inf_{s\in[\frac12K,\infty)} \Wlw(s;K) \leq 0 \Big)
	\leq
	C \sum_{m=1}^\infty 
	L_m^{-n}
	=
	C K^{-n}.
\end{align*}
This concludes~\eqref{eq:Wlw>0}
and hence complete the proof of Part\ref{enu:aldous:optl}.

\bibliographystyle{alphaabbr}
\bibliography{upriver}
\end{document}